\newtheorem{thm}{Theorem}
\newtheorem{lem}{Lemma}
\newtheorem{prop}{Proposition}
\newtheorem{os}{Remark}
\numberwithin{equation}{section}
\def \l { \left( }
\def \r {\right) }
\def \ll { \left\lbrace }
\def \rr { \right\rbrace }
\def \E { {\mathds{E}} }
\def \P { {\mathds{P}} }
\begin{document}


\title[Limit theorems for prices of options]{Limit theorems for prices of options written on \\ semi-Markov processes}

\author{Enrico Scalas}
\email{e.scalas@sussex.ac.uk}
\address{Department of Mathematics, School of Mathematical and Physical Sciences, University of Sussex, UK}

\author{Bruno Toaldo}
\email{bruno.toaldo@unito.it}
\address{Dipartimento di Matematica ``Giuseppe Peano'', Universit\`a degli Studi di Torino, Torino, Italy}

\date{\today}



\begin{abstract}

\noindent We consider plain vanilla European options written on an underlying asset that follows a continuous time semi-Markov multiplicative process. We derive a formula and a renewal type equation for the martingale option price. In the case in which intertrade times follow the Mittag-Leffler distribution, under appropriate scaling, we prove that these option prices converge to the price of an option written on geometric Brownian motion time-changed with the inverse stable subordinator. For geometric Brownian motion time changed with an inverse subordinator, in the more general case when the subordinator's Laplace exponent is a special Bernstein function, we derive a time-fractional generalization of the equation of Black and Scholes.

\end{abstract}

\maketitle

\tableofcontents

\section{Introduction}

\noindent In this paper, we consider the following problem. Assume that we open an option position where the underlying asset is a share in a regulated equity market at an instant of time $t$ after the beginning of continuous trading and with maturity $T$ within the very same trading day before the end of continuous trading. What is the price of that option? In order to answer, we need to be more specific. The answer depends on several assumptions. In particular, it depends on the kind of option, on the specific model for the price fluctuations of the underlying asset and on the option pricing method.

\noindent Throughout the paper, for the sake of simplicity, we shall consider a {\em plain vanilla European call option}, even if most if not all results of ours can be extended straightforwardly to European style options with general payoff.

\noindent As for the model, one would be tempted to use geometric Brownian motion with constant rate of return and volatility. However, this model would not capture the {\em granularity} of intraday prices. Such prices are generated by an asynchronous trading mechanism known as {\em continuous double auction}. Without entering into too many details of market regulation and microstructure \cite{ohara}, this mechanism leads to random fluctuations not only for prices, but also for intertrade durations. The sequence of prices $\{S_i\}_{i=1}^n$ and of durations $\{J_i\}_{i=1}^n$ in a trading day gives full information on the price process, where $n$ is the total number of trades in a day. Usually, also the information on volumes and other quantities such as the bid-ask spread (the difference between the best offer to sell and the best offer to buy), etc. are relevant for modelling, but we shall not consider them here. In the following, for the underlying asset price, we are considering a {\em continuous-time semi-Markov multiplicative model} that will be described in Sections \ref{Markovian}, where the assumption is made that the intertrade durations are independent and identically distributed (i.i.d.) exponential random variables, and \ref{SemiMarkov}, where the i.i.d. hypothesis is retained for durations whereas the exponential-distribution hypothesis is dropped. The stochastic price model is essentially a time-changed Markov chain; the embedded chain for prices is defined through multiplication of the exponential of normal price log-returns as in equation \eqref{priceembeddedprocess} below. The time change is the counting renewal process that counts the number of trades up to a given time $t$. This counting renewal process can be seen as a time-changed Poisson process, where the time change is the inverse of a subordinator, namely the inverse of an increasing L\'evy process whose L\'evy Laplace exponent is a Bernstein function \cite{librobern}.

\noindent This paper concerns a generalization of Merton's 1976 paper \cite{merton1976} where Merton considered options written on a jump-diffusion model. As we are interested in tick-by-tick prices, we deal with the pure jump model described above and we do not consider the diffusion part.
For this reason, we use the {\em martingale option pricing method} in which the option price is given by the discounted expectation of the payoff conditional on the natural filtration (the history of the process) up to time $t$ with respect to an equivalent martingale measure. Given that we deal with intraday prices, we set the risk-free interest rate to zero. We specify the equivalent martingale measure we are using for the semi-Markov multiplicative model in equation \eqref{martingalemeasure}. 

\noindent There are at least four recent papers dealing with options written on semi-Markov processes of the kind we use here. We briefly discuss them in chronological order. Montero \cite{montero} derived renewal equations for option prices written on continuous-time random walks also performing numerical work and presenting implied volatilities. The result we prove in Theorem \ref{teoeqrinnovo} is strictly related to this work. Scalas and Politi \cite{scalas} considered the very same model discussed below and presented the explicit formula for the option price given here in equation \eqref{optionpricesemiMarkov}. The main difference with the present paper is that the authors of \cite{scalas} assumed that, at time $t$, the investor just knows the number of trades since the beginning of continuous trading whereas here, as in Montero's case, we assume full knowledge of the past history of the process including the age, that is the time passed from the instant of the previous trade assumed as a renewal point. If the age is known, the history of the process before the previous renewal is not relevant, leading to a simplification of \eqref{optionpricesemiMarkov}. Cartea \cite{cartea} uses the semi-Markov model with Mittag-Leffler distributed durations as we do in Section \ref{sec4} and derives the explicit fractional equation for the option price arising from equation \eqref{renewal} in Theorem \ref{teoeqrinnovo}. Jacquier and Torricelli \cite{jacquier} consider options written on semi-Markov processes, derive an option pricing formula based on Fourier inversion and explicit expansions for the implied volatility skew.

\noindent The main results of our paper concern limit theorems for the options prices of Theorem \ref{teoeqrinnovo} and they are collected in sections \ref{sec4} and \ref{sec5}. They are Theorems \ref{teoremagenerale} and \ref{theoremML} in Section \ref{sec4}, and Theorem \ref{limitingequation} in Section \ref{sec5}.

\noindent Given the previous hypotheses on the semi-Markov process (normally distributed log-returns and durations obeying a renewal process), when the variance of the log-returns vanishes and the rate of the Poisson process diverges so that their product converges to a constant (that we assume equal to 1 for the sake of simplicity), the multiplicative semi-Markov process of Section \ref{SemiMarkov} converges to a geometric Brownian motions time-changed with an inverse subordinator. Theorem \ref{teoremagenerale} in Section \ref{sec4} shows that the martingale option price for this underlying process obeys the renewal type equation \eqref{eqthelimproc}. It is tempting to conjecture that this price is the limiting price for a sequence of prices of options written on multiplicative semi-Markov processes. In Theorem \ref{theoremML}, we prove that this is indeed the case when the durations follow the Mittag-Leffler distribution.

\noindent Considering that the option price in Theorem \ref{teoremagenerale} satisfies a renewal type equation, it is possible to derive a pseudo-differential equation that generalises Black and Scholes equation when the underlying price follows a time-changed geometric Brownian motion. This is the object of Theorem \ref{limitingequation} where the final value problem for the corresponding time-fractional diffusion-advection equations is derived.

\section{Markovian case}
\label{Markovian}

\noindent 
Let us introduce the notation $\l \Omega, \mathcal{A},\mathds{P} \r$ for the probability space where the following objects are defined.

\noindent We consider a price process in discrete time given by
\begin{equation}
\label{priceembeddedprocess}
S_n = S_0 \prod_{i=1}^n \mathrm{e}^{Y_i}, \qquad n \in \mathbb{N} \cup \ll 0 \rr,
\end{equation}
where $\{Y_i\}_{i=1}^\infty$ is a sequence of i.i.d. normal random variables with expected value $0$ and variance $\sigma^2$ with the meaning of log-returns and $S_0$ is the initial price. In the following, we shall use the convention $\prod_{i=1}^0 = 1$. Moreover, $S_0$ is assumed to be positive, in $L^1 \l \Omega ,\mathcal{A}, \P \r$ and independent of the other prices $\mathrm{e}^{Y_i}$; often we shall explicitly specify its value by working under conditional probability measures, as in $\mathds{P}^x\l A \r:= \mathds{E} \left[ \mathds{1}_{A} \mid S_0=x \right]$. As a consequence, the symbol $\mathds{E}^x$ will denote expectation conditional on $\ll S_0=x \rr$.
With a model of tick-by-tick prices in a regulated equity financial market in mind, let $\{ E_i \}_{i=1}^\infty$ be a sequence of i.i.d. exponential random variables, with parameter $\lambda$ and with the meaning of inter-trade times. In the forthcoming sections, when the inter-trade times will be not exponential we will use the symbol $\ll J_i \rr_{i=1}^\infty$ instead of $\ll E_i \rr_{i=1}^\infty$. Define the epochs (trading times)
\begin{equation}
\tau_0 = 0, \qquad \tau_n = \sum_{i=1}^n E_i,
\end{equation}
and the counting process
\begin{equation}
N^\star(t) = \max \{n: \tau_n \leq t\}.
\end{equation}
Then $N^\star(t)$ is the Poisson process and
\begin{equation}
\mathds{P} (N^\star(t) = k) = \mathrm{e}^{-\lambda t} \frac{(\lambda t)^k}{k!},
\end{equation}
with $\lambda = 1/\mathds{E} (E_1)$. 
Note that the discrete time process $S_n:=S_0\prod_{i=1}^ne^{Y_i}$ is a discrete time homogeneous Markov chain on $\mathbb{R}^+$ whose transition probabilities are given by, for any $n \in \mathbb{N} \cup \ll 0 \rr$ and Borel set $B$,
\begin{align}
h(x,B)= \P \l S_{n+1} \in B \mid S_{n} = x\r \, = \, \P \l S_1 \in B \mid S_0 = x \r \, = \,\P \l xe^{Y_1} \in B \r
\label{trans}
\end{align}
where, as mentioned above, $Y_1$ is a normal random variable with zero expectation and variance $\sigma^2$.

\noindent We can now define the process $S^\star(t)$ as the continuous time (stepped) Markov process with embedded chain $S_n$, i.e., $S^\star(t) = S_{N^\star(t)}$:
\begin{equation}
\label{price}
S^\star(t) = S_0\prod_{i=1}^{N^\star(t)} \mathrm{e}^{Y_i}.
\end{equation}
If $\mathds{E}(\mathrm{e}^{Y_1}) =1$, one can prove that \eqref{price} is a martingale, but this is not general in our case since $\l Y_i \r, i \in \mathbb{N}$, are normal r.v.'s. However, an equivalent martingale measure can be derived observing that the process
\begin{equation}
\label{martingale}
\widetilde{S}^\star (t) = S_0 \prod_{i=1}^{N^\star(t)} \mathrm{e}^{(Y_i - \sigma^2/2)},
\end{equation}
is a martingale with respect to its natural filtration. One can prove, indeed, that under the (equivalent) probability measure
\begin{align}
\label{martmeas}
\mathcal{F}_T^\star \ni A \mapsto \widetilde{\mathds{P}}^\star(A):=\mathds{E} \mathds{1}_A \prod_{i=1}^{N^\star(T)} e^{- \frac{Y_i}{2}-\frac{\sigma^2}{8}}
\end{align}
the r.v.'s $e^{Y_i}$ have expectation $1$ and $S^\star(t), t \in [0,T]$, is a martingale with respect to its natural filtration. We shall use the notation $\widetilde{h}(x,B)$ for the transition probabilities of $S_n$, defined above in \eqref{trans}, under $\widetilde{\P}^\star$.

\noindent Now, let us consider a plain-vanilla European option with pay-off
\begin{equation}
\widetilde{C}(\widetilde{S}(T)) = (\widetilde{S}(T) - K)^+,
\end{equation}
where $T$ is the maturity and $K$ the strike price. We assume that an option position is opened after the beginning of continuous trading and closed within the very same day before continuous trading ends. As we are considering an intra-day option, it is also safe to assume that the risk-free interest rate is $r_F = 0$ because the interest rate on an intra-day basis is usually much smaller than tick-by-tick price returns. For the option price at time $t < T$, we use the conditional expectation with respect to the martingale measure defined in \eqref{martmeas}
\begin{equation}
\label{optionprice1}
C^\star(t) := \mathds{E}_{\widetilde{\mathds{P}}^\star} [ \widetilde{C}(S^\star(T))|\mathcal{F}_t] = \int_0^\infty \widetilde{C} (u) d F_{\widetilde{S}^\star(T)} (u),
\end{equation}
where $\widetilde{C}(x):= \l x-K \r^+$ and $F_{\widetilde{S}^\star(T)} (u,t)$ is given by
\begin{equation}
\label{mellin1}
F_{\widetilde{S}^\star(T)} (u) = \mathrm{e}^{- \lambda (T - t)} \sum_{n=0}^\infty \frac{[\lambda(T - t)]^n}{n!} G_n (u)
\end{equation}
with $G_n(u)$ given by the $n$-fold Mellin convolution of the distribution function of exactly $n$ terms of \eqref{martingale}:
\begin{equation}
\label{mellin2}
G_n (u) = F_{\widetilde{S}^\star (T)}^{\mathcal{M}_n}(u).
\end{equation}

\noindent Equation \eqref{mellin1} can be derived by probabilistic arguments. In particular, the price can move from $S_0$ to $\widetilde{S}^\star (T)$ in $n \geq 0$ steps in a mutually exclusive and exhaustive way. By infinite additivity \eqref{mellin1} follows.

\noindent Denote $$C^\star (t,x):= \mathds{E} \left[ \l \widetilde{S}^\star(T)-K \r^+ \mid \widetilde{S}^\star(t) = x \right] = \mathds{E}^x \left[ \l \widetilde{S}^\star (T-t) -K \r^+ \right].$$
If we plug \eqref{mellin1} into \eqref{optionprice1}, we get by the monotone convergence theorem and \cite[Lemma 7.25]{zyg}
\begin{equation}
\label{optionprice2}
C^\star(t,x) =  \mathrm{e}^{- \lambda (T - t)} \sum_{n=0}^\infty \frac{[\lambda(T - t)]^n}{n!} \int_0^\infty \widetilde{C} (u) d G_n (u),
\end{equation}
and thus
\begin{equation}
\label{optionprice3}
C^\star(t,x) = \mathrm{e}^{- \lambda (T - t)} \sum_{n=0}^\infty \frac{[\lambda(T - t)]^n}{n!} C_n (S_0=x,K,r_F=0,\sigma^2),
\end{equation}
where we set
$$C_n (S_0=x,K,r_F=0,\sigma^2) = \int_0^\infty \widetilde{C} (u) d G_n (u).$$
One can write
\begin{equation}
\label{nprice}
C_n (S_0=x,K,r_F=0, \sigma^2) =  \int_0^\infty \widetilde{C} (u) d G_n (u) = \mathcal{N}(d_{1,n})x - \mathcal{N}(d_{2,n}) K,
\end{equation}
where
\begin{equation}
\mathcal{N}(u) = \frac{1}{\sqrt{2 \pi}} \int_{-\infty}^u dv \, \mathrm{e}^{-v^2/2}
\end{equation}
is the standard normal cumulative distribution function and 
\begin{equation}
d_{1,n} = \frac{\log(x/K)+n( \sigma^2/2)}{\sigma \sqrt{n}},
\end{equation}
\begin{equation}
d_{2,n} = d_{1,n} - \sigma \sqrt{n}.
\end{equation}
Equation \eqref{optionprice3} coincides with equation (16) in Merton's 1976 paper \cite{merton1976} when the diffusion part is suppressed and the risk-free interest rate is $r_F=0$.

\subsection{Donsker's limit for the Markovian case}
\noindent It is a well known fact that, in the limit of rapid jumps with vanishing length (at suitable velocity) the process defined in \eqref{martingale} converges in distribution to the process $S_0e^{B(t)}$ under appropriate scaling, where $B(t)$ is standard Brownian motion. In detail, the limit is as follows: Suppose that the parameter of the exponential r.v.'s is $\lambda_m$ and that the variance of the i.i.d. r.v.'s $Y_i$ is $\sigma_m^2$. Further suppose that, as $m \to \infty$, $\lambda_m \uparrow \infty$ (frequent jumps) and $\sigma_m^2 \downarrow 0$ (vanishing length) in such a way that $\lambda_m \sigma_m^2 \to 1$ as $m \to \infty$. In these heuristic considerations, to simplify notation, we do not specify the dependence on $m$ of the variables when this is clear from the context. One can see that under the measure 
\begin{align}
\mathcal{F}_T^\star \ni A \mapsto \widetilde{\mathds{P}}^\star (A):= \mathds{E}\mathds{1} \prod_{i=1}^{N^\star(T)}e^{-\frac{Y_i}{2}-\frac{\sigma^2_m}{8}},
\label{misuram}
\end{align}
where $\mathcal{F}_t^\star$, $t \in [0,T]$, is the natural filtration of $S^\star (t)$, $t \in [0,T]$, the process \eqref{price} is still convergent to $e^{B(t)}$ in the sense that $\widetilde{\mathds{P}}^\star \l S^\star(t) \in \cdot \r \to \widetilde{\mathds{P}}_\infty^\star \l e^{B(t)} \in \cdot \r$ where $B(t)$, $t \in [0,T]$, is a standard Brownian motion on $\l \Omega, \mathcal{A}, \mathds{P} \r$, with natural filtration $\mathcal{F}_t^B$, and
\begin{align}
\mathcal{F}_T^B \ni A \mapsto \widetilde{\mathds{P}}_\infty^\star \l A \r := \mathds{E} \mathds{1}_A e^{-\frac{B(T)}{2}-\frac{T}{8}},
\end{align}
is Girsanov's measure under which $B(t)+\frac{1}{2}t$ is a Brownian motion and $e^{B(t)} $, $t \in [0,T]$, is a martingale.
 It also follows that the option price \eqref{optionprice1} converges to the option price obtained using the Black and Scholes formula (with $r_F = 0$), $C_{\text{BS}}(t)$, i.e., 
\begin{align}
\label{convergenceI}
\mathds{E}_{\widetilde{\mathds{P}}^\star} \left[  \l S^\star(T) -K \r^+ \mid \mathcal{F}_t   \right] =  C^\star( t) \to C_{\text{BS}}^\star \l  t \r \, = \,\mathds{E}_{\widetilde{\mathds{P}}_\infty^\star} \left[ \l S_0e^{B(T)}-K \r^+ \mid \mathcal{F}_t^B \right],
\end{align}
where $\mathcal{F}_t$ denotes the natural filtration of $S(t)$ while $\mathcal{F}_t^B$ the natural filtration of Brownian motion. The convergence in \eqref{convergenceI} is pointwise convergence for $S^\star(t)$ and $B(t)$ fixed.
We provide here a sketch of the proof for the above assertions further clarifying the kind of convergence. 

\noindent Note that, under the measure $\widetilde{\mathds{P}}^\star$, the r.v.'s \textcolor{black}{$E_i$} are still i.i.d. and have the same distribution, in this case exponential. The jumps $e^{Y_i}$, instead, are still i.i.d. but, under $\widetilde{\mathds{P}}^\star$, they have the same distribution of $e^{Y_i-\sigma_m^2/2}$ under $\mathds{P}$ (see the next section for a rigorous statement and a proof of these last assertions for an arbitrary distribution of $J_i$). It follows that $S^\star(t)$ is a Markov process under $\widetilde{\mathds{P}}^\star$.

\noindent Therefore by virtue of the Markov property,
\begin{align}
\mathds{E}_{\widetilde{\mathds{P}}^\star} \left[ \l S^\star(T)-K \r^+ \mid \mathcal{F}_t \right] \, = \,& \mathds{E}_{\widetilde{\mathds{P}}^\star} \left[ \l S^\star \l T-t \r -K\r^+ \mid S_0 = S^\star (t) \right] \notag \\
= \, & \mathds{E}_{\widetilde{\mathds{P}}^\star}^{S(t)} \l S^\star \l z \r -K\r^+
\end{align}
where $z=T-t$ and where $\mathds{E}_{\widetilde{\mathds{P}}^\star}^x$ denote the conditional expectation $\mathds{E}_{\widetilde{\mathds{P}}^\star} \l \cdot  \mid S_0=x \r$. 
It is easy to see that, under $\mathds{P}^x \l \cdot \r := \mathds{P} \l \cdot \mid S_0=x \r$, we have the following convergence in distribution
\begin{align}
S_0\prod_{i=1}^{N^\star(z)}e^{Y_i-\frac{\sigma^2_m}{2}} \stackrel{\text{d}}{\to}S_0 e^{B(z)-\frac{z}{2}} \text{ as } m \to \infty
\end{align}
from which it easily follows, as a consequence of the continuous mapping theorem, that
\begin{align}
\l S_0\prod_{i=1}^{N^\star(z)}e^{Y_i-\frac{\sigma^2_m}{2}} -K \r^+ \stackrel{\text{d}}{\to} \l S_0  e^{B(z)-\frac{z}{2}} -K \r^+\text{ as } m \to \infty.
\end{align}
It is now possible to check that the sequence
\begin{align}
\l \l S_0\prod_{i=1}^{N^\star(z)}e^{Y_i-\frac{\sigma^2_m}{2}} -K \r^+ \r_{m \in \mathbb{N}}
\end{align}
is $L^2$-bounded, and therefore uniformly integrable. This can be verified observing that
\begin{align}
\mathds{E}^x \left[ \l S_0 \prod_{i=1}^{N^\star(z)}e^{Y_i-\frac{\sigma^2_m}{2}}-K \r^+ \,  \right]^2 \, \leq   \, &x^2 \,  \mathds{E}^x \l \prod_{i=1}^{N^\star(z)}e^{2Y_i-\sigma^2_m} \r \notag \\
= \, & x^2 \mathds{E}^x \mathds{E}^x \left[ \prod_{i=1}^{N^\star(z)} e^{2Y_i-\sigma^2_m} \mid N^\star(z) \right] \notag \\
= \, & x^2 \mathds{E}^x\left[ \prod_{i=1}^{N^\star(z)} \mathds{E}^x e^{2Y_i-\sigma^2_m} \right] \notag \\
= \, & x^2 \mathds{E}^x e^{\sigma^2_mN^\star(z)}  \notag \\
= \, & x^2 e^{\lambda_m t \l e^{\sigma^2_m}-1 \r} 
\label{l2estim}
\end{align}
and using that $\lambda_m\sigma^2_m \to 1$ to say that \eqref{l2estim} is bounded in $m$.
It follows that the sequence is uniformly integrable and thus the sequence of expectations converges to the expectation of the limiting r.v. (e.g. see \cite[Lemma 3.11]{kallenberg}), i.e.,
\begin{align}
\mathds{E}^x \l S_0\prod_{i=1}^{N^\star(z)}e^{Y_i-\frac{\sigma^2_m}{2}} -K \r^+ \to \mathds{E}^x \l S_0 e^{B(z)-\frac{z}{2}} -K \r^+\text{ as } m \to \infty.
\label{questoqui}
\end{align}
By direct computing it is now possible to see that
\begin{align}
\mathds{E}^x \l S_0\prod_{i=1}^{N^\star(z)}e^{Y_i-\frac{\sigma^2_m}{2}} -K \r^+ \, = \, \mathds{E}_{\widetilde{\mathds{P}}^\star}^x\l S_0\prod_{i=1}^{N^\star(z)}e^{Y_i} -K \r^+
\label{26}
\end{align}
as well as
\begin{align}
\mathds{E}^x \l S_0 e^{B(z)-\frac{z}{2}} -K \r^+ \, = \, \mathds{E}_{\widetilde{\mathds{P}}_\infty^\star}^x \l S_0 e^{B(z)}-K \r^+
\label{27}
\end{align}
and thus \eqref{questoqui} means
\begin{align}
\mathds{E}_{\widetilde{\mathds{P}}^\star}^x\l S_0\prod_{i=1}^{N^\star(z)}e^{Y_i} -K \r^+ \to  \mathds{E}_{\widetilde{\mathds{P}}_\infty^\star}^x \l S_0 e^{B(z)}-K \r^+ \text{ as } m \to \infty,
\end{align}
which is what we set out to prove. Equality \eqref{27} is just a consequence of Cameron--Martin Theorem (e.g. \cite[Theorem 16.22]{kallenberg}). 
 To check \eqref{26} note that
\begin{align}
\mathds{E}_{\widetilde{\mathds{P}}^\star} e^{i\xi \sum_{i=1}^{N^\star(z)}Y_i} \, = \, &\mathds{E}e^{i\xi \sum_{i=1}^{N^\star(z)}Y_i-\sum_{i=1}^{N^\star(T)}\frac{Y_i}{2}+\frac{\sigma^2_m}{8} } \notag \\
= \, & \mathds{E} e^{\sum_{i=1}^{N^\star(z)} \l Y_i (i\xi -\frac{1}{2})-\frac{\sigma^2_m}{8} \r} \mathds{E} e^{-\sum_{i=N^\star(z)+1}^{N^\star(T)} \l \frac{Y_i}{2}-\frac{\sigma^2_m}{8} \r} \notag \\
= \, & \mathds{E} \mathds{E} \left[ \prod_{i=1}^{N^\star(z)} e^{ \l Y_i (i\xi -\frac{1}{2})-\frac{\sigma^2_m}{8} \r} \mid N^\star(z) \right] \notag \\
= \, & \mathds{E} \prod_{i=1}^{N^\star(z)}\mathds{E}  e^{ \l Y_i (i\xi -\frac{1}{2})-\frac{\sigma^2_m}{8} \r} \notag \\
= \, & \mathds{E} \prod_{i=1}^{N^\star(z)} e^{\frac{\l i\xi-\frac{1}{2} \r^2\sigma_m^2}{2}-\frac{\sigma^2_m}{8}} \notag \\
= \, & \mathds{E} \prod_{i=1}^{N^\star(z)} e^{-\frac{\xi^2\sigma^2_m}{2}-\frac{\sigma^2_m}{2}} \notag \\
= \, &  \mathds{E} \prod_{i=1}^{N^\star(z)} \mathds{E}e^{i\xi\l Y_i -\frac{\sigma^2_m}{2}\r} \notag \\
= \, & \mathds{E} \mathds{E} \left[ \prod_{i=1}^{N^\star(z)} e^{i\xi \l Y_i-\frac{\sigma^2_m}{2} \r} \mid N^\star(z) \right] \notag \\
= \, & \mathds{E} e^{i\xi \sum_{i=1}^{N^\star(z)} \l Y_i-\frac{\sigma^2_m}{2} \r},
\end{align}
from which it follows that the r.v. $\sum_{i=1}^{N^\star(z)} Y_i$, under $\widetilde{\mathds{P}}^\star$ has the distribution of $\sum_{i=1}^{N^\star(z)} \l Y_i-\frac{\sigma^2_m}{2}\r$ under $\mathds{P}$ and thus \eqref{26} follows.

\noindent The above remarks and calculations can be used to prove the following
\begin{prop}
It is true that, for any fixed $x>0$ and $t \in [0,T]$ and $T>0$, as  $m \to \infty$,
\begin{align}
    \mathds{E}_{\widetilde{\mathds{P}}^\star}\left[ \l S^\star (T) -K \r^+ \mid S^\star (t) = x \right] \to  \mathds{E}_{\widetilde{\mathds{P}}_\infty^\star}\left[ \l S_0e^{B(T)}-K \r^+ \mid S_0e^{B(t)}=x\right] ,
\end{align}
where $S^\star (t)$ is defined in \eqref{price} with $N^\star (t)$ is a Poisson process of rate $\lambda_m$, the $Y_i$ are i.i.d. Gaussian random variables of mean 0 and variance $\sigma^2_m$, with $\lambda_m \uparrow \infty$ and $\sigma_m^2 \downarrow 0$ so that $\lambda_m \sigma^2_m \to 1$ for $m \to \infty$. The measure $\widetilde{\mathds{P}}_\infty^\star$ is defined in \eqref{misuram}.
\end{prop}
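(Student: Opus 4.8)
The plan is to recognise that this proposition is the packaged conclusion of the computations already carried out above, so the proof amounts to reducing the two conditional expectations to the single unconditional limit \eqref{questoqui} that is in hand. First I would invoke the Markov property. Under $\widetilde{\mathds{P}}^\star$ the process $S^\star$ is a time-homogeneous Markov process, so conditioning on $\{S^\star(t)=x\}$ is the same as restarting from $S_0=x$ and running for the remaining time $z:=T-t$; hence
\begin{equation*}
\mathds{E}_{\widetilde{\mathds{P}}^\star}\left[\left(S^\star(T)-K\right)^+ \mid S^\star(t)=x\right] = \mathds{E}_{\widetilde{\mathds{P}}^\star}^x\left(S^\star(z)-K\right)^+ .
\end{equation*}
Likewise $S_0e^{B(\cdot)}$ is Markov under $\widetilde{\mathds{P}}_\infty^\star$, so the right-hand side of the statement equals $\mathds{E}_{\widetilde{\mathds{P}}_\infty^\star}^x\left(S_0e^{B(z)}-K\right)^+$ with $S_0=x$. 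It therefore suffices to prove, for each fixed $x>0$ and $z\ge 0$, that $\mathds{E}_{\widetilde{\mathds{P}}^\star}^x\left(S^\star(z)-K\right)^+ \to \mathds{E}_{\widetilde{\mathds{P}}_\infty^\star}^x\left(S_0e^{B(z)}-K\right)^+$ as $m\to\infty$.

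Next I would transport both sides to the reference measure $\mathds{P}^x$ by means of the change-of-measure identities established above. Identity \eqref{26} rewrites the left-hand side as $\mathds{E}^x\left(S_0\prod_{i=1}^{N^\star(z)}e^{Y_i-\sigma_m^2/2}-K\right)^+$, while the Cameron--Martin identity \eqref{27} rewrites the limiting right-hand side as $\mathds{E}^x\left(S_0e^{B(z)-z/2}-K\right)^+$. After these substitutions the desired convergence becomes precisely \eqref{questoqui}, so the problem is reduced to justifying that one limit.

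Finally I would supply \eqref{questoqui} from two independent ingredients, both already present above. The first is the convergence in distribution $S_0\prod_{i=1}^{N^\star(z)}e^{Y_i-\sigma_m^2/2}\stackrel{\text{d}}{\to}S_0e^{B(z)-z/2}$ under $\mathds{P}^x$: conditioning on the Poisson count $N^\star(z)$ gives the log-price characteristic function $\exp\left(\lambda_m z\left(e^{-i\xi\sigma_m^2/2-\xi^2\sigma_m^2/2}-1\right)\right)$, whose exponent, expanded in the regime $\sigma_m^2\downarrow 0$ with $\lambda_m\sigma_m^2\to 1$, tends to $-i\xi z/2-\xi^2 z/2$, the log-characteristic function of $\mathcal{N}(-z/2,z)$, i.e. of $B(z)-z/2$; the continuous mapping theorem applied to the continuous map $u\mapsto (xe^u-K)^+$ then carries this over to the payoffs. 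The second ingredient is uniform integrability: the $L^2$-estimate \eqref{l2estim} yields $\sup_m\mathds{E}^x\left[\left(S_0\prod_{i=1}^{N^\star(z)}e^{Y_i-\sigma_m^2/2}-K\right)^+\right]^2\le x^2 e^{\lambda_m z(e^{\sigma_m^2}-1)}$, which stays bounded exactly because $\lambda_m\sigma_m^2\to 1$, so the family of payoffs is $L^2$-bounded and hence uniformly integrable. Convergence in distribution together with uniform integrability gives convergence of the expectations by \cite[Lemma 3.11]{kallenberg}, which is \eqref{questoqui} and completes the argument.

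I expect the only genuinely delicate point to be this uniform integrability step: since the payoff $(\,\cdot\,)^+$ is unbounded, convergence in distribution alone does not transfer to the expectation, and the moment bound must be uniform in $m$. It is precisely the scaling hypothesis $\lambda_m\sigma_m^2\to 1$ that keeps the second moment in \eqref{l2estim} bounded as $m\to\infty$; everything else is measure-change bookkeeping and the elementary characteristic-function limit.
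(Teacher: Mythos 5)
Your proposal is correct and follows essentially the same route as the paper's own argument: Markov-property reduction to $\mathds{E}^x_{\widetilde{\mathds{P}}^\star}\l S^\star(z)-K\r^+$ with $z=T-t$, transport to the reference measure via \eqref{26} and \eqref{27}, convergence in distribution plus the continuous mapping theorem, and uniform integrability from the $L^2$-bound \eqref{l2estim} (bounded precisely because $\lambda_m\sigma_m^2\to 1$), concluding via \cite[Lemma 3.11]{kallenberg} to obtain \eqref{questoqui}. Your explicit characteristic-function computation of the compound-Poisson log-price simply fills in the step the paper labels ``easy to see'', and is accurate.
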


\section{Semi-Markov case}
\label{SemiMarkov}

\noindent
Let us now consider a simple generalization of the previous process with the sequence $\{ J_i \}_{i=1}^\infty$ consisting of positive non-exponential absolutely continuous i.i.d. random variables with cumulative distribution function (c.d.f.) $F_J(t)$, probability density function (p.d.f.) $f_J(t)$ and survival probability $\overline{F}_J(t) = 1-F_J(t)$. Equation \eqref{optionprice1} still holds, but now  $F_{\widetilde{S}(T)} (u)$ has a different expression. In this case, we denote the process defined in \eqref{price} by $S(t)$ and the counting process by $N(t)$ whereas we reserve the starred symbols $S^\star (t)$ and $N^\star (t)$ to the Markovian case. Indeed, $S(t)$ is no longer Markovian, but belongs to the class of semi-Markov processes by construction. If we are sitting at a generic time $t$, the probability that this is a renewal epoch $T_n:=\sum_{i=1}^nJ_i$ is zero, in fact $T_n$, $n \in \mathbb{N}$, are absolutely continuous r.v.'s with zero measure on the real line. However, we can assume that we know the past of the process and, in particular, the value of the previous renewal epoch $T_{N(t)}$ that we identify with the instant at which the previous transaction is recorded. Therefore, at time $t$, the {\em age} $\gamma(t):= t-T_{N(t)}$ is also known, whereas the {\em residual life-time} $\mathcal{J}(t):=T_{N(t)+1} - t$ is unknown. Formally, if $\mathcal{F}_t$, $t \in [0,T]$ denotes the natural filtration generated by the process $S(t)$, $t\in [0,T]$, the r.v. $\gamma(t)$ is measurable with respect to $\mathcal{F}_t$ while $\mathcal{J}(t)$ is not.
Since the waiting times between transactions are not exponential r.v.'s the quantity $\gamma(t)$ (which is known at time $t$) is relevant in order to compute the probability of events in the future and therefore for the option pricing formula. In other words the process $\l S(t), \gamma(t) \r$ is a homogeneous Markov process, while $S(t)$ is not. The same holds for $\widetilde{S}(t)$ which is defined analogously to $\widetilde{S}^\star (t)$ in the Markov case. Therefore
\begin{align}
\label{semimarkovdef}
    \mathds{E} \left[ \l \widetilde{S}(T)-K \r^+ \mid \mathcal{F}_t \right] \, = \, &\mathds{E} \left[ \l \widetilde{S}(T)-K \r^+  \mid \widetilde{S}(t), \gamma(t)  \right] \notag \\ = \,&\mathds{E}^{\widetilde{S}(t), \gamma (t)} \left[ \l \widetilde{S}(T-t)-K \r^+  \right]
\end{align}
where we used the classical notation of Markov processes
\begin{align}
  \mathds{P}^{x,s} \l \cdot \r:=  \mathds{P} \l \cdot \mid S_0=x, \gamma(0)=s \r
\end{align}
and $\mathds{E}^{x,s}$ for the corresponding expectation.
Let us denote by $\{ N(T) - N(t) = k \}$ the event corresponding to the fact that there are $k$ transactions between time $t$ and the maturity $T$. We do not know the next price variation when we sit at $t<T_{N(t)+1}$, but we do know $\widetilde{S}(t)$; by virtue of \eqref{semimarkovdef}, \eqref{optionprice2} is replaced by
\begin{align}
\label{optionpricesemiMarkov}
&\mathds{E}^{x, s} \left[ \l \widetilde{S}(T-t)-K \r^+ \right] \notag \\= :\, &\widetilde C(x,s,T-t) \notag \\ 
= \, & \int_0^\infty \widetilde{C}(u) \, dF_{\widetilde{S}(T)}(u,s)  \notag \\
= \, & \sum_{n=0}^\infty \mathds{P} (N(T) - N(t) = n|\widetilde S(t) = x, \gamma (t) = s )\int_0^\infty \widetilde{C}(u) dG_n (u) \notag  \\
= \, &\mathds{P} (N(T) - N(t)=0)|\widetilde S(t) = x, \gamma (t) =s) \int_0^\infty \widetilde{C} (u) dG_0(u) \notag \\ & + \sum_{n=1}^\infty \int_0^{T-t} \mathds{P} (N(T) - N((t+w)) = n-1) d F_{\mathcal{J}_t}^s (w) \int_0^\infty \widetilde{C}
(u) dG_n (u) \notag \\
= \, & \mathds{P} (N(T) - N(t)=0)|\gamma (t) = s) \int_0^\infty \widetilde{C} (u) dG_0(u) \notag \\& + \sum_{n=1}^\infty \left[ \int_0^{T-t} \mathds{P} (N(T) - N((t+w)) = n-1) d F_{\mathcal{J}_t}^s (w) \right] C_n (\textcolor{black}{\widetilde S(t)=x}, K, r_F=0, \sigma^2),
\end{align}
where $\mathcal{J}_t = T_{N(t)+1}-t$ is the residual lifetime and
\begin{align}
    F_{\mathcal{J}_t}^s(w) := \mathds{P} \l \mathcal{J}_t \leq w \mid \gamma (t) = s \r.
\end{align}
We have used that the distribution \eqref{mellin1} becomes in this case, i.e., for the r.v. $\widetilde{S}(T)$,
\begin{equation}
F_{\widetilde{S}(T)} (u,s) = \sum_{n=0}^\infty \mathds{P} (N(T)-N(t) =n|\gamma (t) = s, \widetilde S(t) = x) G_n(u),
\end{equation}
where
\begin{align}
&\mathds{P}(N(T) - N(t) = n|\gamma (t) = s , \widetilde S(t) = x) \notag \\
= \,  &\mathds{P}(N(T) - N(t) = n|\gamma (t) = s ) \notag \\
= \, &\int_0^{T-t} \mathds{P} (N(T) - N((t+w)) = n-1) d F_{\mathcal{J}_t}^s (w), \,\, n \geq 1,
\end{align}
and \eqref{nprice}. Now, we need to specify $F_{\mathcal{J}_t}^s (w)$, the cumulative distribution function of the residual life-time conditional on the age. This is given in terms of the c.d.f. of waiting times $F_J(u)$ by the following formula (e.g., see \cite{doob,politi}) 
\begin{equation}
\label{residuallifetime}
F_{\mathcal{J}_t}^s (w) = \frac{F_J(s+w) -F_J(s)}{1-F_J(s)}.
\end{equation}
Moreover, for $n=0$, one has, conditionally on $S(t)=x$, $\int_0^\infty \widetilde{C} (u) d G_0 (u) = (x-K)^+$ and there are no renewals between $t$ and $T$, therefore
\begin{align}
\mathds{P}(N(T) - N(t) = 0|\gamma (t) =s) = \, &1- F_{\mathcal{J}_t}^s (T-t) \notag \\
= \, & \frac{\overline{F}_J (s+T-t)}{\overline{F}_J (s)},
\end{align}
where $\overline{F}_J (u) = 1-F_J (u)$ is the complementary cumulative distribution function of the waiting times a.k.a. survival (or survivor) function.
Putting everything together we get the following formula for the option price:
\begin{align}
&\widetilde C(x,s,T-t) \notag \\ = \, & (\textcolor{black}{x}-K)^+ \frac{\overline{F}_J (s+T-t)}{\overline{F}_J (s)} \notag \\ &+ \sum_{n=1}^\infty \left[ \int_0^{T-t} \mathds{P} (N(T) - N((t+w)) = n-1) d F_{\mathcal{J}_t}^s (w) \right] C_n (\textcolor{black}{\widetilde S(t)=x}, K, r_F=0, \sigma^2).
\label{36}
\end{align}
In the next section the previous argument will be made fully rigorous and will be formalized by means of an equivalent martingale measure for $S(t)$.

\subsection{A renewal equation for the semi-Markov option price}
\noindent We now derive a renewal equation valid for the price $C(t)$ given by \eqref{optionpricesemiMarkov} in this semi-Markov setting which will be used in the rest of the paper. It is noteworthy that this is a consequence of the semi-Markov property of the process $S(t)$, under a probability measure which makes $S(t)$ a martingale. In the rest of this section we make this rigorous. 

\noindent Let $\mathcal{F}_t$, $t \in [0,T]$, be the filtration generated by $S(t)$, $t \in [0,T]$, and define the probability measure
\begin{align}
\label{martingalemeasure}
    \mathcal{F}_T \ni A \mapsto \widetilde{\P}(A) \, = \, \mathds{E} \mathds{1}_A \prod_{i=1}^{N(T)} e^{-\frac{Y_i}{2}-\frac{\sigma^2}{8}},
\end{align}
which is equivalent to $\P$.

\begin{lem}
Under $\widetilde{\mathds{P}}$, the process $\l S(t), t \in [0,T] \r$ is a $\mathcal{F}_t$ martingale, for any $\sigma^2>0$.
\end{lem}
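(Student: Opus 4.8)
The plan is to establish the martingale property via the standard change-of-measure (Bayes) machinery, reducing the claim under $\widetilde{\mathds{P}}$ to a density computation under $\mathds{P}$. First I would record the Radon--Nikodym density on $\mathcal{F}_T$,
\[
L_T := \prod_{i=1}^{N(T)} e^{-\frac{Y_i}{2}-\frac{\sigma^2}{8}},
\]
and its restriction, the density process $L_t := \mathds{E}[L_T \mid \mathcal{F}_t]$. The key elementary input is the Gaussian moment computation $\mathds{E}\, e^{\pm Y_i/2} = e^{\sigma^2/8}$, so that each factor $e^{-Y_i/2 - \sigma^2/8}$ has unit $\mathds{P}$-mean. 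Since the log-returns $\{Y_i\}$ are i.i.d.\ and independent of the renewal counting process $N$, the future increments $Y_i$ with $i > N(t)$ are independent of $\mathcal{F}_t$; conditioning on the number of transactions $N(T)-N(t)$ in $(t,T]$ and using that every future factor contributes a $1$, I would obtain $L_t = \prod_{i=1}^{N(t)} e^{-Y_i/2 - \sigma^2/8}$. In particular $\mathds{E}\, L_T = 1$, confirming that $\widetilde{\mathds{P}}$ is a bona fide probability measure equivalent to $\mathds{P}$.

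Next I would invoke the abstract characterization that $(S(t))_{t\in[0,T]}$ is a $\widetilde{\mathds{P}}$-martingale for $\mathcal{F}_t$ if and only if the product $(S(t)\, L_t)_{t\in[0,T]}$ is a $\mathds{P}$-martingale for $\mathcal{F}_t$. A direct simplification gives
\[
S(t)\, L_t = S_0 \prod_{i=1}^{N(t)} e^{\frac{Y_i}{2}-\frac{\sigma^2}{8}},
\]
and again $\mathds{E}\, e^{Y_i/2 - \sigma^2/8} = 1$. So each factor in $S(t)L_t$ has unit mean, and I would verify the $\mathds{P}$-martingale property of $S(t)L_t$ by exactly the same conditioning argument: for $s \le t$, condition on $\mathcal{F}_s$ and on $N(t)-N(s)$, exploit independence of the future $Y_i$ from $\mathcal{F}_s$, and collapse the product of unit-mean factors to obtain $\mathds{E}[S(t)\, L_t \mid \mathcal{F}_s] = S(s)\, L_s$. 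Integrability is immediate from the same computation: $\mathds{E}_{\widetilde{\mathds{P}}}|S(t)| = \mathds{E}[S(t)\, L_t] = \mathds{E}\, S_0 < \infty$ because $S_0 \in L^1$. Dividing through by $L_s$ via Bayes' rule then yields $\mathds{E}_{\widetilde{\mathds{P}}}[S(t) \mid \mathcal{F}_s] = S(s)$, which is the assertion.

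The main obstacle I anticipate is handling the random, age-dependent number of future transactions $N(T)-N(t)$ when taking conditional expectations, since the process is only semi-Markov: the distribution of this count depends on the age $\gamma(t)$ through the residual-lifetime law \eqref{residuallifetime}. The point that makes everything go through is that the unit-mean property of each multiplicative factor is robust to \emph{how many} factors appear, so the dependence on $\gamma(t)$ and on the (non-exponential) waiting-time law disappears after summing over the number of jumps; the only structural hypotheses actually used are that $\{Y_i\}$ are i.i.d.\ $N(0,\sigma^2)$ and independent of the counting process $N$. I would be careful to justify the interchange of expectation and the a.s.-finite random product by conditioning on $N(t)$ (equivalently, a Fubini/tower argument), and to state the independence of $\{Y_i\}_{i>N(t)}$ from $\mathcal{F}_t$ precisely, since this is the step that replaces the Markov property invoked in the previous section.
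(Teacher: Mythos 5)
Your proposal is correct, and it reaches the result by a slightly different packaging than the paper. You factor the argument through the density process $L_t = \mathds{E}[L_T \mid \mathcal{F}_t]$ and the standard abstract equivalence that $S$ is a $\widetilde{\mathds{P}}$-martingale if and only if $SL$ is a $\mathds{P}$-martingale, whereas the paper never introduces $L_t$: it verifies the defining property directly, fixing $A \in \mathcal{F}_s$, writing $\mathds{E}_{\widetilde{\mathds{P}}}[\mathds{1}_A S(t)] = \mathds{E}[\mathds{1}_A S(t) L_T]$, splitting the product into the three blocks $i \le N(s)$, $N(s) < i \le N(t)$, $N(t) < i \le N(T)$, and conditioning on the whole trajectory $\{N(w), w \in [0,T]\}$ so that the middle and terminal blocks collapse to $1$. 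The computational engine is identical in both routes --- the unit $\mathds{P}$-mean of each factor $e^{\pm Y_i/2 - \sigma^2/8}$, the full independence of $\{Y_i\}$ from the renewal machinery, and conditioning on the path of $N$ to neutralize the random (age-dependent) number of future factors --- and your observation that the semi-Markov dependence of $N(T)-N(t)$ on $\gamma(t)$ is harmless precisely because unit-mean factors are insensitive to the count is exactly the mechanism the paper exploits. What your version buys is modularity: the explicit $L_t$ simultaneously confirms that $\widetilde{\mathds{P}}$ is a probability measure and reduces everything to one clean $\mathds{P}$-martingale check for $S(t)L_t = S_0 \prod_{i=1}^{N(t)} e^{Y_i/2 - \sigma^2/8}$; what the paper's version buys is self-containedness, avoiding the Bayes lemma. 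One point you should make explicit if you write this up: identifying $L_t$ with $\prod_{i=1}^{N(t)} e^{-Y_i/2-\sigma^2/8}$ presupposes that $N(t)$ and $Y_1, \dots, Y_{N(t)}$ are $\mathcal{F}_t$-measurable, which holds only almost surely --- the $Y_i$ are nonatomic, so the jumps of $S$ are a.s. nonzero and the jump count and sizes are recoverable from the path (the same fact the paper uses implicitly when it treats $\gamma(t)$ as $\mathcal{F}_t$-measurable).
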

\begin{proof}
Note that integrability holds, in fact one has
\begin{align}
& \mathds{E}_{\widetilde{\P}} \left| S(t) \right| = \mathds{E}_{\widetilde{\P}} S(t) \notag \\  = \,& \mathds{E}S_0 \prod_{i=1}^{N(t)} e^{Y_i} \prod_{i=1}^{N(T)} e^{-\frac{Y_i}{2}-\frac{\sigma^2}{8}} \notag \\ = \, & \E S_0\prod_{i=1}^{N(t)} e^{\frac{Y_i}{2}-\frac{\sigma^2}{8}} \prod_{i=N(t)+1}^{N(T)} e^{-\frac{Y_i}{2}-\frac{\sigma^2}{8}} \notag \\
= \, & \E \E \left[S_0\prod_{i=1}^{N(t)} e^{\frac{Y_i}{2}-\frac{\sigma^2}{8}} \prod_{i=N(t)+1}^{N(T)} e^{-\frac{Y_i}{2}-\frac{\sigma^2}{8}}  \mid N(w), w \in [0,T]  \right] \notag \\
= \, & \E \left[ \E \left[S_0\prod_{i=1}^{N(t)} e^{\frac{Y_i}{2}-\frac{\sigma^2}{8}}  \mid N(w), w \in [0,T]  \right] \E \left[ \prod_{i=N(t)+1}^{N(T)} e^{-\frac{Y_i}{2}-\frac{\sigma^2}{8}}  \mid N(w), w \in [0,T]  \right] \right] \notag \\
= \,& \mathds{E}S_0
\label{23}
\end{align}
where we used the fact that r.v.'s $Y_i$ are independent and also independent on $J_i$ (and thus also $N(t)$) and further
\begin{align}
& \E \left[\prod_{i=1}^{N(t)} e^{\frac{Y_i}{2}-\frac{\sigma^2}{8}}  \mid N(w), w \in [0,T]  \right] \, = \, \prod_{i=1}^{N(t)} \mathds{E}e^{\frac{Y_i}{2}-\frac{\sigma^2}{8}} \, = \, 1 \notag \\
&  \E \left[\prod_{i=N(t)+1}^{N(T)} e^{-\frac{Y_i}{2}-\frac{\sigma^2}{8}}  \mid N(w), w \in [0,T]  \right] \, = \, \prod_{i=N(t)+1}^{N(T)} \E e^{-\frac{Y_i}{2}-\frac{\sigma^2}{8}} \, = 1,
\end{align}
since $Y_i$ is Gaussian with zero expectation and variance $\sigma^2$ (and thus also $-Y_i$).
Now, we can prove the martingale property. Fix arbitrarily $0 \leq s \leq t \leq T$. Take $A \in \mathcal{F}_s$. We have that
\begin{align}
&\E_{\widetilde{\P}} \left[ \mathds{1}_A S(t)\right] \notag \\ 
= \,& \E \left[ \mathds{1}_{A} S(t) \prod_{i=1}^{N(T)} e^{-\frac{Y_i}{2}-\frac{\sigma^2}{8}} \right] \notag \\
= \, & \E \left[ \mathds{1}_A S_0\prod_{i=1}^{N(s)} e^{\frac{Y_i}{2}-\frac{\sigma^2}{8}} \prod_{i=N(s)+1}^{N(t)} e^{\frac{Y_i}{2}-\frac{\sigma^2}{8}} \prod_{i=N(t)+1}^{N(T)} e^{-\frac{Y_i}{2}-\frac{\sigma^2}{8}} \right] \notag \\
= \, & \E \E \left[ \mathds{1}_A S_0 \prod_{i=1}^{N(s)} e^{\frac{Y_i}{2}-\frac{\sigma^2}{8}} \prod_{i=N(s)+1}^{N(t)} e^{\frac{Y_i}{2}-\frac{\sigma^2}{8}} \prod_{i=N(t)+1}^{N(T)} e^{-\frac{Y_i}{2}-\frac{\sigma^2}{8}}  \mid N(w), w \in [0,T]\right] \notag \\
= \,& \E \left[ \E \left[ \mathds{1}_AS_0\prod_{i=1}^{N(s)} e^{\frac{Y_i}{2}-\frac{\sigma^2}{8}}\mid N(w), w \in [0,T] \right] \E \left[  \prod_{i=N(s)+1}^{N(t)} e^{\frac{Y_i}{2}-\frac{\sigma^2}{8}} \mid N(w), w \in [0,T]  \right] \right. \notag \\& \left.  \times \E \left[  \prod_{i=N(t)+1}^{N(T)} e^{-\frac{Y_i}{2}-\frac{\sigma^2}{8}} \mid N(w), w \in [0,T]  \right] \right] \notag \\
= \, & \E \left[ \E \left[ \mathds{1}_AS_0\prod_{i=1}^{N(s)} e^{\frac{Y_i}{2}-\frac{\sigma^2}{8}}\mid N(w), w \in [0,T] \right] \E \left[  \prod_{i=N(s)+1}^{N(T)} e^{-\frac{Y_i}{2}-\frac{\sigma^2}{8}} \mid N(w), w \in [0,T]  \right] \right] \notag \\
= \, & \E \E \left[ \mathds{1}_A S_0 \prod_{i=1}^{N(s)} e^{Y_i} \prod_{i=1}^{N(T)} e^{-\frac{Y_i}{2} - \frac{\sigma^2}{8}}  \mid N(w), w \in [0,T] \right] \notag  \\
= \, & \E_{\widetilde{\P}} \left[ \mathds{1}_A S(s) \right].
\end{align}
It follows that $S(t)$ is a martingale.
This completes the proof.
\end{proof}
\begin{lem}
\label{markprop}
Under $\widetilde{\mathds{P}}$ the process $\l S(t), t \in [0,T] \r$ is semi-Markovian. The embedded chain is $S_n:=S_0\prod_{i=1}^ne^{Y_i}$ and the r.v.'s $Y_i$ are still independent. The waiting time distribution is, for any $n$,
\begin{align}
\widetilde{\mathds{P}}\l J_n \leq t  \r \, = \, \mathds{P} \l J_1 \leq t \r 
\end{align}
and the r.v.'s $J_n$ are still independent and also independent from the $Y_n, n \in \mathbb{N}$.
Moreover, the couple $\l \l S(t), \gamma (t) \r, t \in [0,T]\r$ is a homogeneous Markov process.
\end{lem}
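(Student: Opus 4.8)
The plan is to read off everything from the Radon--Nikodym density on $\mathcal{F}_T$,
$$\frac{d\widetilde{\mathds{P}}}{d\mathds{P}}\bigg|_{\mathcal{F}_T} \, = \, \prod_{i=1}^{N(T)} e^{-\frac{Y_i}{2}-\frac{\sigma^2}{8}},$$
which factorizes into one factor per jump, each depending only on the corresponding $Y_i$. Two preliminary facts drive the argument: (i) every factor has unit $\mathds{P}$-expectation, $\E e^{-Y_i/2-\sigma^2/8}=1$ (the Gaussian computation already used in the previous lemma); and (ii) the family $(Y_i)_{i}$ is independent of $(J_i)_i$, hence of the whole counting path $N(\cdot)$ on $[0,T]$. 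The one genuine obstacle is that the number $N(T)$ of factors in the density is itself random and governed by the $J_i$, so the tilt cannot be applied coordinate-by-coordinate to the $Y_i$ directly. I would remove this obstacle by conditioning on $\mathcal{N}_T:=\sigma\l N(w):w\in[0,T]\r$, equivalently by working on each event $\ll N(T)=n \rr$, where the product has a fixed number $n$ of factors.

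\emph{Step 1 (waiting times are unchanged).} For any bounded $\mathcal{N}_T$-measurable functional $\Phi$ of the counting path, I would condition the density on $\mathcal{N}_T$; since the $Y_i$ are independent of $\mathcal{N}_T$ and each factor has unit mean, $\E\left[\prod_{i=1}^{N(T)}e^{-Y_i/2-\sigma^2/8}\mid\mathcal{N}_T\right]=1$, whence $\E_{\widetilde{\mathds{P}}}[\Phi]=\E[\Phi]$. Thus $\widetilde{\mathds{P}}$ and $\mathds{P}$ agree on $\mathcal{N}_T$, so the $J_i$ remain i.i.d.\ with the original law, giving $\widetilde{\mathds{P}}\l J_n\le t\r=\mathds{P}\l J_1\le t\r$ and mutual independence of the $J_i$.

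\emph{Step 2 (the $Y_i$ stay independent, independent of the $J_i$, with tilted marginal).} On $\ll N(T)=n \rr$, for bounded $\Phi$ as above and bounded $h_1,\dots,h_n$, independence of $(Y_i)$ from $\mathcal{N}_T$ gives
\begin{align*}
\E_{\widetilde{\mathds{P}}}\!\left[\mathds{1}_{\{N(T)=n\}}\,\Phi\prod_{i=1}^n h_i(Y_i)\right]
&= \E\!\left[\mathds{1}_{\{N(T)=n\}}\,\Phi\prod_{i=1}^n h_i(Y_i)\,e^{-\frac{Y_i}{2}-\frac{\sigma^2}{8}}\right]\\
&= \E\!\left[\mathds{1}_{\{N(T)=n\}}\,\Phi\right]\prod_{i=1}^n \E\!\left[h_i(Y_i)\,e^{-\frac{Y_i}{2}-\frac{\sigma^2}{8}}\right].
\end{align*}
Completing the square (exactly as in the Markovian computation preceding the Proposition) identifies $\E[h_i(Y_i)e^{-Y_i/2-\sigma^2/8}]=\E[h_i(\bar Y_i)]$ with $\bar Y_i\sim N(-\sigma^2/2,\sigma^2)$, and Step 1 lets me replace $\E[\mathds{1}_{\{N(T)=n\}}\Phi]$ by $\E_{\widetilde{\mathds{P}}}[\mathds{1}_{\{N(T)=n\}}\Phi]$. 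The resulting factorized form simultaneously yields independence of the $Y_i$, their independence from the counting path (hence from the $J_i$), and the shift of marginal that enforces $\E_{\widetilde{\mathds{P}}}e^{Y_i}=1$. Equivalently, I could rerun the characteristic-function calculation of Section \ref{Markovian} verbatim with $N^\star$ replaced by $N$.

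\emph{Step 3 (semi-Markov property and the Markov pair).} Having established that, under $\widetilde{\mathds{P}}$, the driving data $\l Y_i,J_i\r_{i\ge1}$ carry exactly the independence structure of the original construction --- independent $J_i$ with unchanged law, independent $Y_i$, and the two families mutually independent --- the process $S(t)=S_0\prod_{i=1}^{N(t)}e^{Y_i}$ is again semi-Markov with embedded chain $S_n$ and waiting-time law $F_J$, and $\l S(t),\gamma(t)\r$ is a time-homogeneous Markov process by the same age-augmentation recalled around \eqref{semimarkovdef}. As flagged, the crux is the random factor count in the density; once frozen by conditioning on $\mathcal{N}_T$, every claim reduces to the single-jump Gaussian tilt of Steps 1--2.
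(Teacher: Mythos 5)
Your proposal follows essentially the same route as the paper's proof: condition on the counting path to freeze the random number of factors in the density, use that each factor $e^{-Y_i/2-\sigma^2/8}$ has unit $\mathds{P}$-mean to conclude the waiting-time law is unchanged, identify the tilted Gaussian marginal $N(-\sigma^2/2,\sigma^2)$ by completing the square, and invoke age-augmentation (the paper cites Gihman--Skorohod) for the Markov property of $\l S(t),\gamma(t)\r$. However, two of your inferences need repair.

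First, a measurability gap in Step 1: the event $\ll J_n\le t \rr$ is not in general $\mathcal{N}_T$-measurable. The path of $N$ on $[0,T]$ determines $J_1,\dots,J_{N(T)}$, but about the next waiting time it only reveals $\ll J_{N(T)+1}>T-T_{N(T)} \rr$, and it says nothing about $J_n$ for larger indices. So "$\widetilde{\mathds{P}}$ and $\mathds{P}$ agree on $\mathcal{N}_T$" does not by itself yield $\widetilde{\mathds{P}}\l J_n\le t\r=\mathds{P}\l J_1\le t\r$ for all $n$, nor the joint law of all the $J_i$. The fix is immediate: run the identical conditioning argument with $\sigma\l (J_i)_{i\ge 1}\r\supset\mathcal{N}_T$ in place of $\mathcal{N}_T$; since $N(T)$ is measurable with respect to the $J_i$ and the $Y_i$ are independent of them, the conditional expectation of the density is still $1$, so the two measures agree on all of $\sigma\l (J_i)_{i\ge1}\r$. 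This is in substance what the paper does when it conditions on $N(T)$.

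Second, and more substantively, the parenthetical conclusion of Step 2 --- "their independence from the counting path (hence from the $J_i$)" --- does not follow from your factorization, and as an unconditional statement it is false. Your displayed identity is correct, but it only tilts the factors $i\le n$ on $\ll N(T)=n\rr$; the $Y_i$ with $i>n$ pass through with their original law. Writing $\mu=N(0,\sigma^2)$ and $\widetilde\mu=N(-\sigma^2/2,\sigma^2)$, the unconditional $\widetilde{\mathds{P}}$-marginal of $Y_i$ is the mixture $\mathds{P}\l N(T)\ge i\r\widetilde\mu+\mathds{P}\l N(T)<i\r\mu$, so for instance $\widetilde{\mathds{P}}\l Y_1\in B\mid N(T)=0\r=\mu(B)\ne\widetilde{\mathds{P}}\l Y_1\in B\r$ whenever $\sigma^2>0$: the $Y_i$ are not independent of the counting data under $\widetilde{\mathds{P}}$. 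What your computation actually delivers is the correct \emph{conditional} statement --- given the counting path, $Y_1,\dots,Y_{N(T)}$ are i.i.d.\ with law $\widetilde\mu$ --- and this conditional i.i.d.-tilted structure is exactly what the semi-Markov property and the subsequent renewal-equation argument (Theorem \ref{teoeqrinnovo}) use, since there one always conditions on the relevant jump occurring before the horizon $T$. The paper's own proof of Lemma \ref{markprop} wrestles with the same subtlety through its decomposition into $\mathds{1}_{\ll N(T)\ge i_j\rr}$ and $\mathds{1}_{\ll N(T)<i_j\rr}$, so you are in good company; but as written, the leap from the fixed-$n$ factorization to unconditional independence is a genuine gap, and Step 2's conclusion should be restated in conditional form (or restricted to the events where the corresponding jumps occur before $T$).
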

\begin{proof}
Note that
\begin{align}
S(t) = S_0\prod_{i=1}^{n} e^{Y_i} \qquad T_{n} \leq t < T_{n+1}, \qquad n \in \mathbb{N} \cup \ll 0 \rr,
\end{align}
where $T_0=0$ and $T_n$ are the epochs (jump times) of $N(t)$, i.e., $J_n = T_{n+1}-T_{n}$, for any $i \in \mathbb{N}$. Now we prove the independence of the r.v.'s $Y_i$. We have that
\begin{align}
& \widetilde{\P} \l Y_{i_1} \in B_{i_1}, \cdots, Y_{i_k} \in B_{i_k}  \r \notag \\ 
= \, & \E \mathds{1}_{\ll Y_{i_1} \in B_{i_1}, \cdots, Y_{i_k} \in B_{i_k}  \rr} \prod_{i=1}^{N(T)} e^{-\frac{Y_i}{2}-\frac{\sigma^2}{8}} \notag \\
= \,& \E \E \left[ \mathds{1}_{\ll Y_{i_1} \in B_{i_1}, \cdots, Y_{i_k} \in B_{i_k}  \rr} \prod_{i=1}^{N(T)} e^{-\frac{Y_i}{2}-\frac{\sigma^2}{8}} \mid N(w), w \in [0,T] \right] \notag \\
= \, & \E \E \left[ \l e^{-\frac{Y_{i_1}}{2}-\frac{\sigma^2}{8}} \mathds{1}_{\ll Y_{i_1} \in B_{i_1} \rr} \mathds{1}_{\ll N(T) \geq i_1 \rr} + \mathds{1}_{\ll Y_{i_1} \in B_{i_1} \rr} \mathds{1}_{\ll N(T) < i_1 \rr} \r \, \times \, \cdots \, \right. \notag \\ &\times \left. \l e^{-\frac{Y_{i_k}}{2}-\frac{\sigma^2}{8}} \mathds{1}_{\ll Y_{i_k} \in B_{i_k} \rr} \mathds{1}_{\ll  N(t) \geq i_k \rr} + \mathds{1}_{\ll Y_{i_k}\in B_{i_k} \rr} \mathds{1}_{\ll  N(t) < i_k \rr}\r \right. \notag \\
& \times \left.\prod_{\stackrel{i=1}{i \neq i_1, \cdots, i_k}}^{N(T)}e^{-\frac{Y_i}{2}-\frac{\sigma^2}{8}} \mid N(w), w \in [0,T]\right]  \notag \\
= \, &\E \E \left[  \l e^{-\frac{Y_{i_1}}{2}-\frac{\sigma^2}{8}} \mathds{1}_{\ll Y_{i_1} \in B_{i_1} \rr} \mathds{1}_{\ll N(T) \geq i_1 \rr} + \mathds{1}_{\ll Y_{i_1} \in B_{i_1} \rr} \mathds{1}_{\ll N(T) < i_1 \rr} \r  \, \times \, \cdots \, \right. \notag \\  &\left. \times \l e^{-\frac{Y_{i_k}}{2}-\frac{\sigma^2}{8}} \mathds{1}_{\ll Y_{i_k} \in B_{i_k} \rr} \mathds{1}_{\ll N(T) \geq i_k \rr} + \mathds{1}_{\ll Y_{i_k} \in B_{i_k} \rr} \mathds{1}_{\ll N(T) < i_k \rr} \r  \mid N(w), w \in [0,T]\right] \notag \\
= \, & \E \left[  \l e^{-\frac{Y_{i_1}}{2}-\frac{\sigma^2}{8}} \mathds{1}_{\ll Y_{i_1} \in B_{i_1} \rr} \mathds{1}_{\ll N(T) \geq i_1 \rr} + \mathds{1}_{\ll Y_{i_1} \in B_{i_1} \rr} \mathds{1}_{\ll N(T) < i_1 \rr} \r \right] \, \times \,  \cdots \, \notag \\ & \times \, \E \left[ \l e^{-\frac{Y_{i_k}}{2}-\frac{\sigma^2}{8}} \mathds{1}_{\ll Y_{i_k} \in B_{i_k} \rr} \mathds{1}_{\ll N(T) \geq i_k \rr} + \mathds{1}_{\ll Y_{i_k} \in B_{i_k} \rr} \mathds{1}_{\ll N(T) < i_k \rr} \r \right] \notag \\
= \, & \widetilde{\P} \l  Y_{i_1} \in B_{i_1} \r \cdots \widetilde{\P} \l  Y_{i_k} \in B_{i_k} \r \end{align}
where we used the following fact
\begin{align}
& \widetilde{\P} \l  Y_{j} \in B_{j} \r \notag \\
 = \, &  \E \mathds{1}_{\ll Y_j \in B_j \rr}\prod_{i=1}^{N(T)}e^{- \frac{Y_i}{2}-\frac{\sigma^2}{8} } \notag \\
= \, & \E \bigg[ \E  \left[  \mathds{1}_{\ll Y_j\in B_j \rr} e^{-\frac{Y_j}{2}-\frac{\sigma^2}{8}} \mathds{1}_{\ll N(T) \geq j \rr} + \mathds{1}_{\ll N(T) < j \rr}\mathds{1}_{\ll Y_j\in B_j \rr}   \mid N(w), w \in [0, T]\right] \notag \\ &  \times  \E \left[ \prod_{\stackrel{i=1}{i \neq j}}^{N(T)}e^{- \frac{Y_i}{2}-\frac{\sigma^2}{8} } \mid N(w), w \in [0, T] \right] \bigg] \notag \\
= \, & \E \E  \left[ \mathds{1}_{\ll Y_j\in B_j \rr} e^{-\frac{Y_j}{2}-\frac{\sigma^2}{8}} \mathds{1}_{\ll N(T) \geq j \rr} + \mathds{1}_{\ll Y_j \in B_j \rr} \mathds{1}_{\ll N(T)<j \rr} \mid N(w), w \in [0, T]\right]  \notag \\
= \, &  \E  \left[ \mathds{1}_{\ll Y_j\in B_j \rr} e^{-\frac{Y_j}{2}-\frac{\sigma^2}{8}} \mathds{1}_{\ll N(T) \geq j \rr} + \mathds{1}_{\ll Y_j \in B_j \rr} \mathds{1}_{\ll N(T)<j \rr} \right].
\end{align}
Now we show that the waiting times $J_i$, under $\widetilde{\P}$, are still i.i.d. r.v.'s independent from $S_{0}, \cdots, S_i$, and, more generally, from $Y_1, \cdots, Y_i$. Let use first note that, marginally,
\begin{align}
\widetilde{\mathds{P}} \l J_n \leq t \r \, = \, &\E \left[ \mathds{1}_{\ll J_n \leq t \rr} \prod_{i=1}^{N(T)} e^{-\frac{Y_i}{2}-\frac{\sigma^2}{8}} \right] \notag \\
= \, &  \E \left[ \E \left[ \mathds{1}_{\ll J_n \leq t \rr} \mid N(T) \right] \E \left[ \prod_{i=1}^{N(T)} e^{-\frac{Y_i}{2}-\frac{\sigma^2}{8}} \mid N(T)\right] \right] \notag \\
= \, & \mathds{P} \l J_n \leq t \r,
\label{29}
\end{align}
where we used that the r.v.'s $Y_i$ are independent of the $J_i$, i.e., conditionally on $N(T)$ the r.v. $\prod_{i=1}^{N(T)}e^{-\frac{Y_i}{2}-\frac{\sigma^2}{8}}$ is independent of $J_n$, for any $n \in \mathbb{N}$.
Independence can be proved by observing that
\begin{align}
&\widetilde{\P} \l J_{i_1} \leq t_{i_1}, \cdots, J_{i_n} \leq t_{i_n} \r \notag \\
 = \,& \E \left[ \mathds{1}_{\ll J_{i_1} \leq t_{i_1}, \cdots, J_{i_n} \leq t_{i_n}\rr} \prod_{i=1}^{N(T)}e^{-\frac{Y_i}{2}-\frac{\sigma^2}{8}} \right] \notag \\
= \, & \E \left[ \E \left[  \mathds{1}_{\ll J_{i_1} \leq t_{i_1}, \cdots, J_{i_n} \leq t_{i_n}\rr}\mid N(T) \right]  \E \left[ \prod_{i=1}^{N(T)}e^{-\frac{Y_i}{2}-\frac{\sigma^2}{8}}  \mid N(T) \right] \right] \notag \\
= \, & \E \mathds{1}_{\ll J_{i_1} \leq t_{i_1}, \cdots, J_{i_n} \leq t_{i_n}\rr} \notag \\
= \, & \E \mathds{1}_{\ll J_{i_1} \leq t_{i_1}\rr} \, \cdots \, \E \mathds{1}_{\ll J_{i_n} \leq t_{i_n} \rr} \notag \\
= \, & \widetilde{\P} \l J_{i_1} \leq t_{i_1}  \r \cdots \widetilde{\P} \l J_{i_n} \leq t_{i_n} \r
\end{align}
where we used \eqref{29} in the last step. Take now $ A \in \sigma \ll Y_1, \cdots, Y_{n} \rr $, by similar arguments
\begin{align}
&\E_{\widetilde{\P}} \mathds{1}_{\ll J_n \leq w \rr} \mathds{1}_A \notag \\ = \, &\E \mathds{1}_{\ll J_n \leq w \rr} \mathds{1}_A \prod_{i=1}^{N(T)} e^{-\frac{Y_i}{2}-\frac{\sigma^2}{8}} \notag \\
= \, & \E \left[ \mathds{1}_{\ll J_n\leq w \rr} \prod_{i=N(T_n)+1}^{N(T)} e^{-\frac{Y_i}{2}-\frac{\sigma^2}{8}}   \mathds{1}_A  \prod_{i=1}^{N(T_n)} e^{-\frac{Y_i}{2}-\frac{\sigma^2}{8}}   \right] \notag \\
= \, & \E \left[ \mathds{1}_{\ll J_n\leq w \rr} \prod_{i=n+1}^{N(T)-N(T_n)+n} e^{-\frac{Y_i}{2}-\frac{\sigma^2}{8}}   \mathds{1}_A  \prod_{i=1}^{N(T_n)-N(0)} e^{-\frac{Y_i}{2}-\frac{\sigma^2}{8}}   \right] \notag \\
= \, & \E \left[  \mathds{1}_{\ll J_n\leq w \rr} \prod_{i=n+1}^{N(T)-N(T_n)+n} e^{-\frac{Y_i}{2}-\frac{\sigma^2}{8}} \right] \E \left[\mathds{1}_A  \prod_{i=1}^{N(T_n)-N(0)} e^{-\frac{Y_i}{2}-\frac{\sigma^2}{8}}   \right] \notag \\
= \, & \E \left[  \mathds{1}_{\ll J_n\leq w \rr} \prod_{i=n+1}^{N(T)-N(T_n)+n}e^{-\frac{Y_i}{2}-\frac{\sigma^2}{8}}\right] \E \left[\mathds{1}_A  \prod_{i=1}^{N(T_n)-N(0)} e^{-\frac{Y_i}{2}-\frac{\sigma^2}{8}}   \right]  \notag \\
 & \times \E \left[   \prod_{i=1}^{N(T_n)-N(0)} e^{-\frac{Y_i}{2}-\frac{\sigma^2}{8}}  \right] \, \E \left[ \prod_{i=n+1}^{N(T)-N(T_n)+n}e^{-\frac{Y_i}{2}-\frac{\sigma^2}{8}}  \right] \notag \\
 = \, & \E \left[  \mathds{1}_{\ll J_n\leq w \rr} \prod_{i=1}^{N(T)}e^{-\frac{Y_i}{2}-\frac{\sigma^2}{8}}\right] \E \left[\mathds{1}_A  \prod_{i=1}^{N(T)} e^{-\frac{Y_i}{2}-\frac{\sigma^2}{8}}   \right] \notag \\
 = \, & \E_{\widetilde{\P}} \mathds{1}_{\ll J_n \leq w \rr}  \E_{\widetilde{\P}} \mathds{1}_A.
\end{align}
Therefore, the first statement is true by construction. The fact that $\l S(t), \gamma(t) \r$ is a homogeneous Markov process is now an application of \cite[Lemma 2, Section 3, Chapter 3]{gihman}.
\end{proof}
\noindent We are now in position to derive the renewal equation for the option price \eqref{optionprice1}. We remark that in our framework the option price
\begin{align}       
C(t):={\mathds{E}_{\widetilde{\P}}} \l \l S \l T \r -K \r^+ \mid \mathcal{F}_t \r
\end{align}
does not only depend on the time to maturity $z:=T-t$ and on the initial price $S(t)$: the price process is semi-Markovian and thus $\mathcal{F}_t$ contains a further relevant piece of information, namely the age $\gamma (t)$.

\noindent We shall use the notation $\widetilde {\mathds{P}}^{x,s} (A):= \widetilde{ \mathds{P}} \l A \mid S(0) = x, \gamma (0) = s \r$ for the conditional probability, $\mathds{E}_{\widetilde{\P}}^{x,s}$ for the corresponding expectation and the same notation for the measure $\mathds{P}$. 

\begin{thm}
\label{teoeqrinnovo}
For the option price $ \mathds{E}_{\widetilde{\P}} \l (S(T)-K)^+ \mid \mathcal{F}_t \r$ it is true that, for $z:=(T-t)$,
\begin{align}
\label{renewalmarkov}
 {\mathds{E}}_{\widetilde{\P}} \l (S(T)-K)^+ \mid \mathcal{F}_t \r \, = \,  {\mathds{E}}_{\widetilde{\P}}^{S(t), \gamma(t)} \l (S(z)-K)^+ \r.
\end{align}
Define $C(x,s,z):=  {\mathds{E}}_{\widetilde{\P}}^{x, s} \l (S(z)-K)^+ \r$. Then $C(x,s,z)$ satisfies the renewal equation
\begin{align}
\label{renewal}
C(x,s,z) \, = \, (x-K)^+ \frac{\overline{F}_J(z+s)}{\overline{F}_J(s)} + \int_0^z \int_0^\infty C(y,0,z-\tau) \frac{f_J(s+\tau)}{\overline{F}_J(s)}  \widetilde{h}(x,dy)d\tau,
\end{align}
where $\widetilde{h}(x,dy)$ is the martingale modification of \eqref{trans}.
\end{thm}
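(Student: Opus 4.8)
The plan is to prove the two assertions in turn, each resting on the structural results already established. For the identity \eqref{renewalmarkov} I would invoke directly the conclusion of Lemma \ref{markprop}: under $\widetilde{\mathds{P}}$ the pair $(S(t),\gamma(t))$ is a homogeneous Markov process. Hence, as far as the payoff $(S(T)-K)^+$ is concerned, conditioning on $\mathcal{F}_t$ is the same as conditioning on $(S(t),\gamma(t))$, and time-homogeneity lets me shift the interval $[t,T]$ to $[0,z]$ with $z=T-t$. This gives $\mathds{E}_{\widetilde{\P}}[(S(T)-K)^+\mid\mathcal{F}_t]=\mathds{E}_{\widetilde{\P}}^{S(t),\gamma(t)}[(S(z)-K)^+]$, which is exactly \eqref{renewalmarkov} and motivates the definition of $C(x,s,z)$.

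The renewal equation \eqref{renewal} I would obtain by a first-renewal (regeneration) decomposition. Starting from $S(0)=x$ with age $\gamma(0)=s$, the residual lifetime $\mathcal{J}_0=T_1$, i.e.\ the time of the first transaction after $0$, has by \eqref{residuallifetime} the conditional density $f_J(s+\tau)/\overline{F}_J(s)$ on $[0,\infty)$, while the price stays frozen, $S(u)=x$, until that first jump. I would then split the expectation defining $C(x,s,z)$ according to whether this first jump falls after the maturity or before it.

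On the event that no transaction occurs in $[0,z]$, which has probability $\overline{F}_J(s+z)/\overline{F}_J(s)$, the price remains $S(z)=x$, so the payoff contributes $(x-K)^+\,\overline{F}_J(s+z)/\overline{F}_J(s)$. On the complementary event the first jump occurs at some $\tau\in(0,z]$; at that instant the price moves from $x$ to a new value $y$ drawn according to the martingale-modified kernel $\widetilde{h}(x,dy)$, and the age is reset to $0$. By the homogeneous Markov property of $(S(t),\gamma(t))$ applied at the regeneration time $T_1$, the conditional expected payoff given this scenario is $C(y,0,z-\tau)$, the same function evaluated at the fresh state $(y,0)$ with reduced maturity $z-\tau$. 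Integrating against the joint law of the jump time and the jump size yields the term $\int_0^z\int_0^\infty C(y,0,z-\tau)\,\frac{f_J(s+\tau)}{\overline{F}_J(s)}\,\widetilde{h}(x,dy)\,d\tau$, and summing the two contributions gives \eqref{renewal}.

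The step requiring most care is the factorization of the jump time and the jump magnitude inside the integral together with the justification of the Markov restart. This is precisely where Lemma \ref{markprop} is essential: under $\widetilde{\mathds{P}}$ the increments $Y_i$ and the waiting times $J_i$ remain independent, so the law of the new price $y=xe^{Y_1}$ (governed by $\widetilde{h}$) decouples from the law of the first renewal time $\tau$ (governed by $f_J(s+\cdot)/\overline{F}_J(s)$), which is exactly what permits writing the product $\frac{f_J(s+\tau)}{\overline{F}_J(s)}\,\widetilde{h}(x,dy)\,d\tau$. The same independence, combined with the i.i.d.\ structure of the post-first renewals, gives the regenerative property: conditionally on $\{T_1=\tau,\ S(\tau)=y\}$ the shifted process $(S(\tau+u),\gamma(\tau+u))_{u\ge0}$ is a fresh copy started at $(y,0)$ and independent of the history up to $\tau$, so that the post-jump expected payoff may legitimately be replaced by $C(y,0,z-\tau)$.
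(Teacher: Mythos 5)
Your proposal is correct and follows essentially the same route as the paper's proof: both establish \eqref{renewalmarkov} from the homogeneous Markov property of $\l S(t), \gamma(t) \r$ under $\widetilde{\P}$ (Lemma \ref{markprop}), then derive \eqref{renewal} by conditioning on whether the first renewal $\mathcal{J}(0)$ occurs before or after $z$, factorizing the joint law of the jump time and jump size via the independence of the $J_i$ and $Y_i$ under $\widetilde{\P}$ and the invariance of the law of $J_1$ under the change of measure, and restarting at the fresh state $(y,0)$. The only cosmetic difference is that you phrase the restart as a regeneration at the random time $T_1$, whereas the paper conditions on $\ll \mathcal{J}(0)=\tau, S(\tau)=y \rr = \ll \gamma(\tau)=0, S(\tau)=y \rr$ and invokes the Markov property of the pair, which is the same content.
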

\begin{proof}
By the Markov property of $\l S(t), \gamma (t) \r$ under $\widetilde{\P}$ proven in Lemma \ref{markprop}, we immediately have that
\begin{align}
 {\mathds{E}}_{\widetilde{\P}} \l (S(T)-K)^+ \mid \mathcal{F}_t \r \, = \,  {\mathds{E}}_{\widetilde{\P}}^{S(t), \gamma(t)} \l (S(z)-K)^+ \r, \notag
\end{align}
thus establishing \eqref{renewalmarkov}.
Now note that
\begin{align}
&{\mathds{E}}_{\widetilde{\P}}^{x,s} \l (S(z)-K)^+ \r \notag \\ = \, & {\mathds{E}}_{\widetilde{\P}}^{x,s} \l (S(z)-K)^+ \r \mathds{1}_{\ll \mathcal{J}(0) \leq z \rr} + {\mathds{E}}_{\widetilde{\P}}^{x,s} \l (S(z)-K)^+ \r \mathds{1}_{\ll \mathcal{J}(0) > z \rr} \notag \\
= \, & \int_0^z \int_0^\infty  {\mathds{E}}_{\widetilde{\P}}^{x,s} \l (S(z)-K)^+  \mid \mathcal{J}(0) = \tau, S(\tau) = y  \r  \widetilde{\mathds{P}}^{x,s}\l \mathcal{J}(0) \in d\tau, S (\tau) \in dy \r \notag \\
& + (x-K)^+  \, \widetilde{\mathds{P}}^{x,s}  \l \mathcal{J}(0)>z \r \notag \\
= \, &  \int_0^z \int_0^\infty  {\mathds{E}}_{\widetilde{\P}}^{x,s} \l (S(z)-K)^+  \mid \gamma (\tau) = 0, S(\tau) = y  \r  \widetilde{\mathds{P}}^{x,s}\l \mathcal{J}(0) \in d\tau, S (\tau) \in dy \r \notag \\
& + (x-K)^+  \, \widetilde{\mathds{P}}^{x,s}  \l \mathcal{J}(0)>z \r \notag \\
= \, & \int_0^z \int_0^\infty  {\mathds{E}}_{\widetilde{\P}}^{y,0} \l (S(z-\tau)-K)^+  \r  \widetilde{\mathds{P}}^{x,s}\l \mathcal{J}(0) \in d\tau, S (\tau) \in dy \r  \notag \\ &+ (x-K)^+  \, \widetilde{\mathds{P}}^{x,s}  \l \mathcal{J}(0)>z \r.
\label{probab}
\end{align}
The probabilities in \eqref{probab} can be further written as
\begin{align}
\widetilde{\mathds{P}}^{x,s}\l \mathcal{J}(0) \in d\tau, S (\tau) \in dy \r \, = \, & \widetilde{\P} \l J_1 \in s+d\tau, S_0e^{Y_1} \in dy \mid J_1 > s , S_0=x \r \notag \\
= \, & \widetilde{\P} \l J_1 \in s+d\tau \mid J_1 > s \r \widetilde{\P} \l S_0e^{Y_1} \in dy \mid  S_0=x \r \notag \\
= \, &  \P \l J_1 \in s+d\tau \mid J_1 > s \r \widetilde{\P} \l S_0e^{Y_1} \in dy \mid  S_0=x \r \notag \\
= \, & \frac{f_J(s+\tau)}{\overline{F}_J(s)} \widetilde{h}(x,dy)d\tau
\end{align}
where, in the third last step we used independence under $\widetilde{\P}$ between $J_i$ and $Y_i$ while in the second last we used the fact that the distribution of the r.v's $J_i$ is the same under $\P$ and $\widetilde {\P}$. A similar argument shows that
\begin{align}
\widetilde{\mathds{P}}^{x,s}  \l \mathcal{J}(0)>z \r \, = \, \frac{\overline{F}_J(z+s)}{\overline{F}_J(s)}
\end{align}
and this completes the proof.
\end{proof}
\begin{prop}
The rhs of \eqref{36} is a solution to equation \eqref{renewal}.
\end{prop}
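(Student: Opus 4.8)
The plan is to verify, by direct substitution and term-by-term matching in the summation index, that the function $\widetilde C(x,s,z)$ defined by the right-hand side of \eqref{36} (with $z=T-t$) solves \eqref{renewal}. First I would note that the boundary term $(x-K)^+\,\overline{F}_J(s+z)/\overline{F}_J(s)$ appears identically in \eqref{36} and in \eqref{renewal}, so the entire content of the proposition is the equality between the series in \eqref{36} and the integral term $\int_0^z\int_0^\infty \widetilde C(y,0,z-\tau)\,\frac{f_J(s+\tau)}{\overline{F}_J(s)}\,\widetilde h(x,dy)\,d\tau$ of \eqref{renewal}. To streamline the bookkeeping I would write $p_k(u)$ for the probability that a fresh renewal process makes exactly $k$ renewals in $[0,u]$, so that $\mathds{P}(N(T)-N(t+w)=n-1)=p_{n-1}(z-w)$ and $p_0(u)=\overline{F}_J(u)$; by \eqref{residuallifetime} one also has $dF^{s}_{\mathcal J_t}(w)=\frac{f_J(s+w)}{\overline{F}_J(s)}\,dw$.

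The computation rests on two identities. The first is a Chapman--Kolmogorov / Mellin-convolution identity,
\begin{align}
\int_0^\infty C_m(y,K,0,\sigma^2)\,\widetilde h(x,dy)\;=\;C_{m+1}(x,K,0,\sigma^2),\qquad m\ge 0,
\label{plan-CK}
\end{align}
valid because $G_n$ in \eqref{mellin2} is the $n$-fold Mellin convolution of the one-step jump law, so integrating the $m$-step price $C_m$ against one additional transition $\widetilde h(x,\cdot)$ yields the $(m+1)$-step price; in particular the case $m=0$ reads $\int_0^\infty(y-K)^+\,\widetilde h(x,dy)=C_1(x,K,0,\sigma^2)$. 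The second is the first-renewal recursion of the counting process,
\begin{align}
p_{n-1}(u)\;=\;\int_0^{u}p_{n-2}(u-w')\,f_J(w')\,dw',\qquad n\ge 2,
\label{plan-ren}
\end{align}
obtained by conditioning on the epoch $w'$ of the first renewal.

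With these in hand I would substitute the explicit form \eqref{36} for $\widetilde C(y,0,z-\tau)$ --- here the age is $0$, so $\overline{F}_J(0)=1$ and $dF^{0}_{\mathcal J}(w')=f_J(w')\,dw'$ --- into the integral term of \eqref{renewal}, integrate against $\widetilde h(x,dy)$ using \eqref{plan-CK}, and reindex $m\mapsto n-1$. The boundary part of $\widetilde C(y,0,z-\tau)$ then contributes $\overline{F}_J(z-\tau)\,C_1(x,K,0,\sigma^2)=p_0(z-\tau)\,C_1(x,K,0,\sigma^2)$, while its series part contributes $\sum_{n\ge 2}\bigl(\int_0^{z-\tau}p_{n-2}((z-\tau)-w')\,f_J(w')\,dw'\bigr)\,C_n(x,K,0,\sigma^2)$. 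Collecting the coefficient of each $C_n(x,K,0,\sigma^2)$ inside the inner $y$-integral, the claim reduces for every $n\ge1$ to identifying that coefficient with $p_{n-1}(z-\tau)$: for $n=1$ this is $p_0=\overline{F}_J$ by definition, and for $n\ge2$ it is exactly \eqref{plan-ren}. Hence $\int_0^\infty \widetilde C(y,0,z-\tau)\,\widetilde h(x,dy)=\sum_{n\ge1}p_{n-1}(z-\tau)\,C_n(x,K,0,\sigma^2)$, and applying the outer operator $\int_0^z\frac{f_J(s+\tau)}{\overline{F}_J(s)}\,d\tau$ reproduces precisely the series of \eqref{36}, which proves the proposition.

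The only genuine subtlety, and the place I would be most careful, is justifying the interchanges of the infinite sum with the kernel integral $\widetilde h(x,\cdot)$ and with the $\tau$- and $w'$-integrations. This is harmless here because every factor is nonnegative ($p_k\ge0$, $f_J\ge0$, $\widetilde h(x,\cdot)$ is a probability kernel and $C_n\ge0$), so Tonelli's theorem legitimizes all reorderings; convergence is guaranteed since $0\le C_n(x,K,0,\sigma^2)\le x$ by \eqref{nprice} while $\sum_{k\ge0}p_k(u)=1$, so each rearranged series is dominated by $x$. Apart from this measure-theoretic bookkeeping, the argument is the purely algebraic matching above, whose conceptual core is the recognition of the convolution identity \eqref{plan-CK} together with the renewal recursion \eqref{plan-ren}.
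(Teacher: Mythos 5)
Your proof is correct and follows essentially the same route as the paper's: both substitute the explicit formula into the integral term of \eqref{renewal}, use the one-step convolution identity $\int_0^\infty C_n(y,K,0,\sigma^2)\,\widetilde h(x,dy)=C_{n+1}(x,K,0,\sigma^2)$ (which the paper verifies via the fact that $e^{Y_i}$ under $\widetilde{\mathds{P}}$ is distributed as $e^{Y_i-\sigma^2/2}$ under $\mathds{P}$), and then match the coefficient of each $C_n$ with the renewal probability appearing in \eqref{36}. The only difference is presentational: the paper works directly with the compact representation $C(y,0,u)=\sum_n \mathds{P}(N(u)=n\mid\gamma(0)=0)\,C_n$ and shifts the summation index, whereas you recover that representation explicitly through the first-renewal recursion \eqref{plan-ren} and add the Tonelli justification for the interchanges, which the paper leaves implicit.
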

\begin{proof}
Note that, since $z=T-t$, by the Markov property of $\l S(t), \gamma(t) \r$,
\begin{align}
&\int_0^\infty C(y,0,z-\tau) \, \widetilde{h}_x(dy)\notag \\  = \,& \int_0^\infty \sum_{n=0}^\infty \mathds{P} \l N(z-\tau) =n \mid \gamma (0)=0, S(0)=x \r \notag \\ & \times C_n \l S(0)=y, K, r_F=0, \sigma^2 \r \widetilde{h}_x(dy) \notag \\
= \, & \sum_{n=0}^\infty \mathds{P} \l N(T)-N(t+\tau) = n \mid \gamma (t+\tau) =0, S(t+\tau)=x \r \notag \\ & \times C_{n+1}(S(0)=x,K,r_F=0, \sigma^2 ) \notag \\
= \, & \sum_{n=1}^\infty \mathds{P} \l N(T)-N(t+\tau) = n-1 \mid \gamma(t+\tau) = 0, S(t+\tau) = x \r \notag \\ & \times C_n\l S(0)=x, k, r_F=0, \sigma^2 \r
\end{align}
where we used that
\begin{align}
&\int_0^\infty C_n \l S(0)=y, K, r_F=0, \sigma^2 \r \widetilde{h}_x(dy) \notag \\ = \,&\int_0^\infty  \left[ \mathds{E} \l S_0 \prod_{i=1}^n e^{Y_i-\frac{\sigma^2}{2}}-K \r^+ \mid S_0 =y\right] \widetilde{h}_x(dy) \notag \\
= \, &\mathds{E}_{\widetilde{\mathds{P}}} \left[ \mathds{E}_{\widetilde{\mathds{P}}} \l S_0 \prod_{i=2}^{n+1} e^{Y_i}-K \r^+ \mid S_0 = xe^{Y_1} \right] \notag \\
= \, & \mathds{E}_{\widetilde{\mathds{P}}} \l x \prod_{i=1}^{n+1} e^{Y_i} -K \r^+ \notag \\
= \, & C_{n+1}(S(0)=x, K, r_F=0, \sigma^2)
\end{align}
where we used that $e^{Y_i}$ have, under $\widetilde{\mathds{P}}$, the same distribution of $e^{Y_i-\frac{\sigma^2}{2}}$ under $\mathds{P}$.
Further, since $z=T-t$, the term in \eqref{renewal} that multiplies $(x-K)^+$ becomes
\begin{align}
\frac{\overline{F}(T-t+s)}{\overline{F}_J(s)} \, = \, \mathds{P} \l \mathcal{J}_t > T-t \mid \gamma (t) = s \r \, = \, 1- F_{\mathcal{J}_t}^s(T-t).
\end{align}
Putting pieces together and substituting in \eqref{renewal} we find that the rhs of \eqref{renewal} coincides with the rhs of \eqref{36}. The result follows.
\end{proof}
\begin{os}
Note that when the waiting times are exponential the price process is a function of the Markov chain $S^\star (t)$, i.e., for any $s, s^\prime \geq 0$,
\begin{align}
C^\star (z,x)= \mathds{E}_{\widetilde{P}^\star}^{x,s} \l S^\star(z)-K \r^+ \, = \, \mathds{E}_{\widetilde{P}^\star}^{x,s^\prime} \l S^\star(z)-K \r^+.
\end{align}
Therefore one can write a differential (Kolmogorov's) equation
\begin{align}
\frac{d}{dz} C^\star(z,x) \, = \, \lambda \int_0^\infty \l C^\star(z,y)-C^\star (z,x) \r \widetilde{h}_x(dy), \qquad C^\star (0,x) = (x-K)^+,
\label{kolmeq}
\end{align}
since the operator appearing at the rhs of \eqref{kolmeq} is the generator of $S^\star (z)$.
Note that this equation reduces to \cite[eq. 14]{merton1976} in the case the diffusive part of the price process is absent.
\end{os}
\begin{os}
Equation \eqref{renewal} can be compared with the unnumbered equation immediately after equation (2) in the paper by Montero \cite{montero}. Our equation coincides with Montero's one as our risk-free interest rate is zero and we integrate on prices and not on returns assuming that prices are positive random variables.
\end{os}

\subsection{A time-change relationship}
\noindent In the following, we use the fact that, in the semi-Markov case, the process $S(t)$ can be viewed as the time-change of the Markov price process. Therefore, one can use the relationship $S(t) = S^\star \l L(t) \r$, $t \in [0,T]$, where $L(t)$ is a suitable time-change. For details on the time-change construction, we refer the reader to \cite{meerpoisson}. Let us recall some basic facts.

\noindent A subordinator $\sigma(t), t \geq 0,$ is a strictly increasing L\'evy process whose L\'evy Laplace exponent is a Bernstein function; in other words, one has
\begin{align}
\mathds{E}e^{-\phi \sigma (t)} \, = \, e^{-tf(\phi)},
\end{align}
where
\begin{align}
f(\phi) \, = \,  b \phi + \int_0^\infty \l 1-e^{-\phi s} \r \nu (ds),
\label{bernstein}
\end{align}
for a non-negative constant $b \geq 0$ and a L\'evy measure $\nu(\cdot)$ supported on $(0,+ \infty)$ and satisfying
\begin{align}
\int_0^\infty \l s \wedge 1 \r \nu(ds) < \infty.
\end{align}
Then we define
\begin{align}
L(t) : = \inf \ll s \geq 0 : \sigma(s) > t \rr
\end{align}
as the inverse process of $\sigma$.
If the process $\sigma(t)$ is independent from the Poisson process $N^\star (t)$ and it is stricty increasing, we have the following result.
\begin{thm}[Theorem 4.1 in \cite{meerpoisson}]
\label{teomeer}
If either $b>0$ or $\nu(0, +\infty) = +\infty$, the time-changed Poisson process $N^\star (L(t))$ is a renewal process whose i.i.d. waiting times $J_n$ satisfies
\begin{align}
\overline{F}_{J_n} (y) = \mathds{P} \l J_n > t \r \, = \, \E e^{-\lambda L(t)}.
\end{align}
\end{thm}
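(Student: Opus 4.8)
The plan is to make the jump structure of the time-changed process $N^\star(L(t))$ explicit and then read off both the renewal property and the waiting-time law. Since either $b>0$ or $\nu(0,+\infty)=+\infty$, the subordinator $\sigma$ is strictly increasing, and therefore its inverse $L$ has continuous, non-decreasing paths with $L(t)\uparrow\infty$. Writing $\tau_0=0$ and $\tau_k=\sum_{i=1}^k E_i$ for the arrival epochs of the rate-$\lambda$ Poisson process $N^\star$, the process $N^\star(L(t))$ increases by one precisely when the continuous path $t\mapsto L(t)$ crosses the level $\tau_k$. That level is first attained at time $\sigma(\tau_k-)$, and because $\tau_k$ is independent of $\sigma$ and has a continuous law, a.s.\ $\tau_k$ is a continuity point of $\sigma$, so the $k$-th jump time of $N^\star(L(\cdot))$ is $T_k=\sigma(\tau_k)$. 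This is exactly where the hypothesis enters: if $b=0$ and $\nu(0,+\infty)<\infty$ then $\sigma$ is compound Poisson, $L$ is a step process, and $N^\star(L(\cdot))$ need not have unit jumps, so the clean conclusion fails.

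Next I would obtain the waiting times. They are $J_k=T_k-T_{k-1}=\sigma(\tau_k)-\sigma(\tau_{k-1})$. Conditioning on the whole Poisson path $\l \tau_k \r_k$ and using that $\sigma$ is independent of $N^\star$ and has stationary independent increments, the increments of $\sigma$ over the disjoint intervals $[\tau_{k-1},\tau_k)$ are, conditionally, independent with $\sigma(\tau_k)-\sigma(\tau_{k-1})\stackrel{d}{=}\sigma(\tau_k-\tau_{k-1})=\sigma(E_k)$. Since the $E_k$ are i.i.d.\ $\mathrm{Exp}(\lambda)$, integrating out the conditioning shows the $J_k$ are i.i.d.\ with $J_k\stackrel{d}{=}\sigma(E)$, where $E\sim\mathrm{Exp}(\lambda)$ is independent of $\sigma$; this establishes simultaneously that $N^\star(L(\cdot))$ is a renewal process and that all its waiting times share a single law.

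It remains to identify that law with $\E e^{-\lambda L(t)}$. Using the duality $\ll \sigma(u)>t \rr=\ll L(t)<u \rr$, valid up to $\P$-null sets since for fixed $u$ the time $u$ is a.s.\ not a jump of $\sigma$, I would write
\begin{align}
\P\l J_k>t\r \,=\, \P\l \sigma(E)>t\r \,=\, \int_0^\infty \lambda e^{-\lambda u}\,\P\l \sigma(u)>t\r\,du \,=\, \int_0^\infty \lambda e^{-\lambda u}\,\P\l L(t)<u\r\,du. \notag
\end{align}
An integration by parts in $u$, with no boundary contribution since $\P\l L(t)<0 \r=0$ and $\P\l L(t)=0\r=0$ for $t>0$, turns the last integral into $\int_{[0,\infty)} e^{-\lambda u}\,d\,\P\l L(t)\leq u\r=\E e^{-\lambda L(t)}$, which is the claimed formula. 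As a consistency check one also gets the survival function of $J_1$ immediately from $\P\l J_1>t\r=\P\l N^\star(L(t))=0\r=\E e^{-\lambda L(t)}$, using independence of $N^\star$ and $L$ together with $\P\l N^\star(u)=0\r=e^{-\lambda u}$.

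The main obstacle is the rigorous regeneration/independence step: one must justify conditioning on the Poisson path and invoking stationarity and independence of the increments of $\sigma$ over the random, $\sigma$-independent intervals $[\tau_{k-1},\tau_k)$ — equivalently, the strong Markov property of $\sigma$ at the independent times $\tau_k$ — in order to upgrade ``identically distributed'' to genuine independence of the $J_k$, and hence to the full renewal property. The accompanying measure-zero points (that each $\tau_k$ a.s.\ avoids the jumps of $\sigma$, so that $T_k=\sigma(\tau_k)$, and the a.s.\ validity of the duality relation used in the integral) are routine but must be controlled so that the identities hold exactly.
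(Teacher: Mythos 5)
Your proof is correct. Be aware, though, that the paper itself contains no proof of this statement: it is imported verbatim as Theorem 4.1 of the cited reference (Meerschaert--Nane--Vellaisamy), so the only meaningful comparison is with that source, and your argument essentially reconstructs the standard one: identify the jump times of $N^\star(L(\cdot))$ as $\sigma(\tau_k-)$, observe that $\sigma(\tau_k-)=\sigma(\tau_k)$ a.s.\ because $\tau_k$ is independent of $\sigma$ with a continuous (Gamma) law while the jump set of $\sigma$ is countable, represent the waiting times as increments $\sigma(\tau_k)-\sigma(\tau_{k-1})\stackrel{d}{=}\sigma(E_k)$, and identify the common law through the inverse process.

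Two refinements. First, the ``main obstacle'' you flag at the end is genuinely routine in this setting and does not require the strong Markov property: since the whole sequence $(\tau_k)_{k\geq 1}$ is independent of $\sigma$, you may condition on it, under which $\sigma$ keeps its unconditional law; stationarity and independence of its increments over the now deterministic disjoint intervals $[\tau_{k-1},\tau_k]$ give the conditional product structure, and integrating out the i.i.d.\ exponential gaps via Fubini upgrades this to genuine i.i.d.-ness of the $J_k$. Second, your justification of the duality $\{\sigma(u)>t\}=\{L(t)<u\}$ up to null sets is slightly misstated: the discrepancy event is contained in $\{\sigma(u-)\leq t\leq \sigma(u)\}$, whose probability for fixed $u$ is $\P\left(\sigma(u)=t\right)$ (not merely the probability that $u$ is a jump time of $\sigma$); this vanishes for every $u$ in the infinite-activity case, since then the marginals of $\sigma$ are non-atomic, and in the drift-plus-compound-Poisson case it can fail only at the single value $u=t/b$, so in either case the exceptional set of $u$ is Lebesgue-null and your $du$-integral identity stands. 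Finally, note that your closing ``consistency check'' $\P\left(J_1>t\right)=\P\left(N^\star(L(t))=0\right)=\E e^{-\lambda L(t)}$ is actually the shortest route to the waiting-time law: combined with your i.i.d.\ step it yields the stated formula directly, with no integration by parts at all.
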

\noindent We shall assume from now on that $N(t) = N^\star \l L(t) \r$ for some $L(t)$ inverse of a strictly increasing subordinator.
\begin{os}
It follows from Theorem \ref{teomeer} that the process $S(t)=S^\star \l  L(t) \r$ belongs to the class of semi-Markov processes studied in the previous section, with the waiting-time complementary cumulative distribution function $\mathds{P} \l J_n > t \r \, = \, \E e^{-\lambda L(t)}.$ This will be true also under the measure $\widetilde{\mathds{P}}$, as we shall see in the forthcoming results.
\end{os}

\noindent Since most of our results will be obtained under the measure $\widetilde{\P}$, we show here that if $\sigma(t)$ is a subordinator independent on $S^\star (t)$ under $\mathds{P}$, it remains independent under $\widetilde{\mathds{P}}$. We stress that now $\widetilde{\mathds{P}}$ can be rewritten using the time-change relationship:
\begin{align}
\widetilde{\P} \l A \r \, : = \, \E \mathds{1}_A e^{-\sum_{i=1}^{N(T)}} e^{-\frac{Y_i}{2}-\frac{\sigma^2}{8}} \,  = \, \E \mathds{1}_A e^{-\sum_{i=1}^{N^\star(L(T))}} e^{-\frac{Y_i}{2}-\frac{\sigma^2}{8}}.
\end{align}
\begin{lem}
\label{lemmasubtc}
Suppose that, under $\P$, $\sigma(t)$ is a subordinator with Laplace exponent $f$ independent from $N^\star (t)$. Then, under $\widetilde{\P}$, the process $\sigma(t)$ is again a subordinator with Laplace exponent $f$.
\end{lem}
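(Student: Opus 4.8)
The plan is to show that the passage from $\P$ to $\widetilde{\P}$ leaves the law of the entire path $\l \sigma(t) \r_{t \geq 0}$ unchanged; since ``being a subordinator with Laplace exponent $f$'' is a property of the law alone, this suffices. Concretely, I would fix an arbitrary bounded measurable functional $\Phi$ of the path of $\sigma$ and prove $\E_{\widetilde{\P}} \Phi(\sigma) = \E_{\P} \Phi(\sigma)$. Testing against $\Phi(\sigma) = \exp \l -\sum_j \phi_j \sigma(t_j) \r$ for finitely many $t_j$ and $\phi_j \geq 0$ already recovers every finite-dimensional Laplace transform, and these determine the law, hence the Bernstein exponent.

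First I would write, straight from the definition of the density,
\begin{align}
\E_{\widetilde{\P}} \Phi(\sigma) \, = \, \E \left[ \Phi(\sigma) \prod_{i=1}^{N(T)} e^{-\frac{Y_i}{2}-\frac{\sigma^2}{8}} \right],
\end{align}
and then separate the integrand into pieces living on independent parts of the space: under $\P$ the sequence $\ll Y_i \rr$ is independent of the pair of processes $N^\star$ and $\sigma$, whereas both $\Phi(\sigma)$ and the random index $N(T) = N^\star(L(T))$ are measurable with respect to $\mathcal{G}$, the $\sigma$-algebra generated by $\l N^\star(t) \r_{t \geq 0}$ and $\l \sigma(t) \r_{t \geq 0}$ jointly (recall $L$ is the inverse of $\sigma$).

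Next I would condition on $\mathcal{G}$. Since $\ll Y_i \rr$ is independent of $\mathcal{G}$ and $N(T)$ is $\mathcal{G}$-measurable, the freezing lemma gives
\begin{align}
\E \left[ \prod_{i=1}^{N(T)} e^{-\frac{Y_i}{2}-\frac{\sigma^2}{8}} \, \Big| \, \mathcal{G} \right] \, = \, g(N(T)), \qquad g(k) \, := \, \E \prod_{i=1}^{k} e^{-\frac{Y_i}{2}-\frac{\sigma^2}{8}}.
\end{align}
For each fixed $k$ the Gaussian moment generating function yields $\E e^{-Y_i/2} = e^{\sigma^2/8}$, so every factor equals $1$ and hence $g(k) \equiv 1$; this is precisely the cancellation already used around \eqref{29} and in the martingale computations. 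Pulling out the $\mathcal{G}$-measurable factor $\Phi(\sigma)$ and inserting $\E[\,\cdot \mid \mathcal{G}] = 1$ for the density yields $\E_{\widetilde{\P}} \Phi(\sigma) = \E \Phi(\sigma) = \E_{\P} \Phi(\sigma)$, as desired.

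The only delicate point is the random upper limit $N(T)$ in the product: because $N(T) = N^\star(L(T))$ depends on $\sigma$ through its inverse $L$, it is not independent of $\sigma$, so the expectation cannot simply be factored. The correct device is to condition on the \emph{joint} $\sigma$-algebra $\mathcal{G}$, which freezes $N(T)$ to a constant while keeping the $Y_i$ independent and identically distributed; the per-factor identity $\E e^{-Y_i/2 - \sigma^2/8} = 1$ then closes the argument irrespective of the value of $N(T)$. Everything else is routine.
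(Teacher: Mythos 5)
Your proof is correct, and its engine is the same as the paper's: the per-factor Gaussian cancellation $\E\, e^{-Y_i/2-\sigma^2/8}=1$, deployed after conditioning on the temporal randomness so that the random index $N(T)=N^\star(L(T))$ is frozen. The difference is one of scope. The paper conditions only on $N^\star(L(T))$ and verifies only the one-dimensional Laplace transform $\E_{\widetilde{\P}}\, e^{-\phi\sigma(t)}=e^{-tf(\phi)}$; read literally, this identifies the marginals of $\sigma(t)$ under $\widetilde{\P}$ but not the increment structure, so the conclusion that $\sigma$ is a subordinator (stationary, independent increments) is left as an implicit extension of the same computation to finite-dimensional joint transforms. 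You instead condition on the joint $\sigma$-algebra $\mathcal{G}$ generated by $(N^\star,\sigma)$ --- with respect to which both $\Phi(\sigma)$ and $N(T)$ are measurable, since $L$ is a measurable functional of the path of $\sigma$ --- and obtain $\E_{\widetilde{\P}}\,\Phi(\sigma)=\E\,\Phi(\sigma)$ for every bounded path functional, i.e.\ invariance of the entire law of $\sigma$, from which the subordinator property follows at once because it is a property of the path law alone. Your identification of the dependence of $N(T)$ on $\sigma$ through $L$ as the obstruction to naive factorization is exactly the right diagnosis; it also explains why the paper's coarser conditioning works, since given $N(T)$ the density is a function of finitely many $Y_i$, which are independent of the pair $(\sigma,N^\star)$, making $\sigma(t)$ and the density conditionally independent. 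What the paper's shorter route buys is economy and symmetry with the parallel computation for $\widetilde{\P}_\infty$; what yours buys is that the stated conclusion, about the process and not merely its one-dimensional marginals, is actually proved.
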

\begin{proof}
Note that 
\begin{align}
\mathds{E}_{\widetilde{\P}} e^{-\phi \sigma(t)} \, = \,& \E e^{-\phi \sigma (t)} \prod_{i=1}^{N^\star(L(T))} e^{-\frac{Y_i}{2}-\frac{\sigma_2}{8}} \notag \\
= \, & \E \left[ \E \left[ e^{-\phi \sigma(t)} \mid N^\star(L(T)) \right] \E \left[ \prod_{i=1}^{N^\star (L(T))} e^{-\frac{Y_i}{2}-\frac{\sigma^2}{8}} \mid N^\star(L(T)) \right] \right]\notag \\
 = \, & e^{-tf(\phi)}.
\end{align}
It follows that $\sigma (t)$ is a subordinator with Laplace exponent $f$ under $\widetilde{\P}$. 
\end{proof}
\noindent From the last result and Theorem \ref{teomeer}, one can see that the price process
\begin{align}
\label{timechange}
S(t) = S_{N^\star(L(t))}
\end{align}
has all the properties derived in the previous section. 

\noindent The case $f(\phi) = \phi^\alpha$, $\alpha \in (0,1)$ will be of particular interest in the following; if this condition is satisfied, $\sigma(t)$ is an $\alpha$-stable subordinator. In this case, one can explicitly write the distribution $\overline{F}_J$ appearing in the renewal equation derived in Theorem \ref{teoeqrinnovo}. This distribution is, indeed, \cite{meerpoisson}
\begin{align}
\overline{F}_J(t) \, = \, \E e^{-\lambda L(t)} \, = \, E_\alpha (-\lambda t^\alpha) \, : = \, \sum_{k=0}^\infty \frac{\l -\lambda t^\alpha \r^k}{\Gamma (\alpha k +1)}
\label{47}
\end{align}
where the last equality is the definition of the one-parameter Mittag-Leffler function $E_\alpha$ where, if $\alpha \in (0,1]$, one gets a legitimate complementary cumulative distribution function. It follows that
\begin{align}
F_J(t) \, := \, 1-E_{\alpha}(-\lambda t^\alpha) 
\end{align}
is a cumulative distribution function. Moreover, it is well known that $t \mapsto E_\alpha(-\lambda t^\alpha)$ is a completely monotone function \cite{mainardimittag} and therefore it is infinitely differentiable on $(0, \infty)$. We shall denote by
\begin{align}
e_\alpha(t) \, : = \, -\frac{d}{dt} E_\alpha(-\lambda t^\alpha),
\end{align}
the probability density function of the r.v.'s $J$.

\section{Donsker's limit for the semi-Markov case}
\label{sec4}
\noindent In this section, we consider the process $S(t)$ obtained as a time-change and defined by \eqref{timechange}. For the sake of simplicity, we shall further assume that $b=0$ and $\nu(0, \infty) = \infty$, so that the subordinator is strictly increasing and has no drift. This will simplify several steps in the results below. However the assumption $b=0$ is not crucial for the validity of the theorems and with some more work, one could generalize our results. We leave these extensions to future research. We consider here the limit of the option price $C(x,y,z)$, as $\lambda_m \uparrow \infty$, $\sigma^2_m \downarrow 0$ with $\lambda_m \sigma_m^2 \to 1$. The parameters $\lambda_m$ represent the rate of the Poisson process, $N^\star (t)$, which is time-changed. To highlight the dependence of $C(x,y,z)$ on the above parameters, we use the notation $C_m(x,y,z)$.

\noindent The functional limit of the process $S(t)$ is the time-changed geometric (standard, i.e., $B(0)=0$) Brownian motion $e^{B(L(t))}$ (this is a classical result in the theory of CTRWs' limiting processes, and the reader can consult \cite{meerstra}). As we shall see this limiting process is a martingale under an appropriate probability measure. Moreover, this process is semi-Markovian in the sense that it has the Markov property at its renewal points (we shall discuss this point in detail later). Given that the process has intervals in which it is constant, induced by the (independent) time-change, as in 
\begin{align}
    e^{B(L(t))} = e^{B(w)}, \qquad \sigma(w^-) \leq t < \sigma(w),
\end{align}
the renewal points can be represented through the undershooting and overshooting process of the subordinator, namely the processes $\sigma(L(t)^-)$, $t \in [0,T]$, and $\sigma (L(t))$, $t \in [0,T]$. In other words the age of the process $e^{B(L(t))}$ is
\begin{align}
    \gamma^\infty(t) := t-\sigma(L(t)^-)
\end{align}
while the remaining lifetime is
\begin{align}
    \mathcal{J}^\infty (t) \, : = \, \sigma \l L(t) \r-t.
\end{align}
We consider the pricing of a plain vanilla European call option that has $e^{B(L(t))}$ as underlying process whose position is opened at time $t$, which is not necessarily a renewal point of the process $e^{B(L(t))}$. It follows that, when the position is opened, an amount of time $\gamma^\infty (t)$ has passed since the last variation in the price of the asset and an amount of time $\mathcal{J}^\infty (t)$ is still missing to the next variation. Furthermore, a countable infinity of renewal points (transactions) can occur in each finite interval of time. More precisely, they are represented by the random (closed) set $\mathcal{R}:=\text{cl}\l \ll \sigma(y): L(t) \leq y \leq L(T) \rr \r$. Since the jumps of a subordinator with infinite activity, i.e., $\nu(0, \infty) = \infty$, are dense in $[0, \infty)$, it follows that in the time interval $[t,T]$ there may not be renewal points (transactions) or there may be a countable infinity of it (the strictly positive finite case is ruled out). Further, the probability that a fixed arbitrary point $t$ belongs to the set of renewal points is zero, i.e., \cite[Proposition 1.9]{bertoins}
\begin{align}
    \mathds{P} \l t \in \mathcal{R} \r =0,  \qquad \text{ for any } t>0.  
\end{align}
At time $t$, we assume we know the full history of the process, which is contained in the filtration $\mathcal{G}_t:= \mathcal{F}_{L(t)-}^\infty$, $t \in [0,T]$, where $\mathcal{F}_w^\infty$ is the filtration generated by the process $\l S_0 e^{B_w}, \sigma_w \r$. Note that the age $\gamma^\infty(t)$ is measurable with respect to  $\mathcal{G}_t$, because the process $e^{B(L(t))}$ is constant between the renewal point $\sigma(L(t)^-)$ and $t$, while the remaining lifetime $\mathcal{J}^\infty (t)$ is not measurable with respect to $\mathcal{G}_t$. In what follows, we use the process
\begin{align}
    S_0 e^{B(L(t))}, \qquad t \in [0,T],
\end{align}
where $S_0$ is a $L^1$ positive r.v., and the option price
\begin{align}
    \mathds{E}_{\widetilde{\mathds{P}}_\infty} \left[ \l S_0 e^{B(L(T))} -K \r^+ \mid \mathcal{G}_t \right], \qquad t \in [0,T],
\end{align}
where 
\begin{align}
   \mathcal{G}_T \ni A \mapsto \widetilde{\mathds{P}}_\infty (A) \, := \, \mathds{E} \mathds{1}_A e^{-\frac{B(L(T))}{2}-\frac{L(T)}{8}},
\end{align}
is a probability measure under which $\sigma(t)$ is again a subordinator with Laplace exponent $f(\phi)$ and $e^{B(L(t))}$, $t \in [0,T]$ is a martingale. These statements are discussed in the next results.
\begin{lem}
Under $\widetilde{\P}_\infty$ the process $\sigma (t)$ is a subordinator with Laplace exponent $f(\phi)$.
\end{lem}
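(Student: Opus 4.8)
The plan is to follow the same strategy as in the proof of Lemma \ref{lemmasubtc}, namely to compute the Laplace transform of $\sigma(t)$ under $\widetilde{\mathds{P}}_\infty$ by conditioning on the path of the subordinator, and to show that the Radon--Nikodym density integrates to $1$ once this conditioning is performed. The essential observation is that, even though the density $e^{-B(L(T))/2-L(T)/8}$ now involves $L(T)$, which is itself a functional of $\sigma$, the Brownian motion $B$ is independent of $\sigma$ (and hence of $L$), so that conditioning on $\sigma$ collapses the density to a one-dimensional Gaussian exponential moment that is exactly cancelled by the correction term $e^{-L(T)/8}$.

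First I would condition on the $\sigma$-field generated by the whole path of $\sigma$ (equivalently of $L$). Given $\sigma$, the value $L(T)$ is determined, and by the independence of $B$ and $\sigma$ the random variable $B(L(T))$ is, conditionally, centred Gaussian with variance $L(T)$. The Gaussian moment generating formula then yields
\[
\mathds{E}\left[ e^{-\frac{B(L(T))}{2}} \,\middle|\, \sigma \right] = e^{\frac{L(T)}{8}},
\]
so that the full density satisfies
\[
\mathds{E}\left[ e^{-\frac{B(L(T))}{2}-\frac{L(T)}{8}} \,\middle|\, \sigma \right] = 1.
\]
This is the exact analogue of the step in Lemma \ref{lemmasubtc} where, after conditioning on $N^\star(L(T))$, the product $\prod_{i=1}^{N^\star(L(T))} e^{-Y_i/2-\sigma^2/8}$ was shown to have conditional expectation $1$.

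With this identity in hand, the tower property gives, for every $\phi \geq 0$ and $t \in [0,T]$,
\[
\mathds{E}_{\widetilde{\mathds{P}}_\infty} e^{-\phi \sigma(t)} = \mathds{E}\left[ e^{-\phi\sigma(t)} \,\mathds{E}\left[ e^{-\frac{B(L(T))}{2}-\frac{L(T)}{8}} \,\middle|\, \sigma \right] \right] = \mathds{E}\, e^{-\phi \sigma(t)} = e^{-t f(\phi)} .
\]
Moreover, since the conditional expectation of the density given $\sigma$ is identically $1$, the very same computation applied to an arbitrary bounded measurable functional $F$ of the path of $\sigma$ yields $\mathds{E}_{\widetilde{\mathds{P}}_\infty} F(\sigma) = \mathds{E}\, F(\sigma)$. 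Hence the law of $\sigma$ under $\widetilde{\mathds{P}}_\infty$ coincides with its law under $\mathds{P}$, and in particular $\sigma(t)$ remains a subordinator with the same Laplace exponent $f(\phi)$, which is what we want.

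The hard part, and the only place where care is genuinely needed, is that one cannot argue by direct independence of $\sigma$ and the density as in the discrete setting of Lemma \ref{lemmasubtc}, precisely because $L(T)$ is $\sigma$-measurable. The argument instead rests on the conditional Gaussianity of $B(L(T))$ given $\sigma$, and it is exactly this conditional structure that makes the drift correction $e^{-L(T)/8}$ cancel the Gaussian exponential moment; verifying that $B(L(T))$ is conditionally $N(0,L(T))$, which follows from the independence of $B$ and $\sigma$ together with the $\sigma$-measurability of $L(T)$, is therefore the crux of the proof.
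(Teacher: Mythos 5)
Your proposal is correct and follows essentially the same route as the paper: both arguments exploit the independence of $B$ and $\sigma$ together with the identity $\mathds{E}\, e^{-B(w)/2 - w/8} = 1$, the only cosmetic difference being that you condition on the full path of $\sigma$ where the paper conditions on $L(T)$ alone. Your closing observation---that the same computation applied to an arbitrary bounded path functional $F(\sigma)$ shows the entire law of $\sigma$ is unchanged under $\widetilde{\mathds{P}}_\infty$---is in fact a welcome strengthening, since matching one-dimensional Laplace transforms alone would not suffice to conclude that $\sigma$ remains a subordinator (one needs the finite-dimensional distributions), a point the paper's proof leaves implicit.
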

\begin{proof}
The proof follows the same spirit as Lemma \ref{lemmasubtc}. Formally
\begin{align}
    \mathds{E}_{\widetilde{\mathds{P}}_\infty} e^{-\phi \sigma(t)} \, = \,& \mathds{E} \mathds{E} \left[ e^{-\phi \sigma (t)} e^{-\frac{B(L(T))}{2}-\frac{L(T)}{8}} \mid L(T) \right]\notag \\
    = \, & \mathds{E} \mathds{E} \left[ e^{-\phi \sigma (t)}\mid L(T) \right] \mathds{E} \left[e^{-\frac{B(L(T))}{2}-\frac{L(T)}{8}} \mid L(T) \right] \notag \\
    = \, & \mathds{E} e^{-\phi \sigma(t)}
\end{align}
where we used that Brownian motion is independent on $\sigma(t)$, under $\widetilde{\P}$, and
\begin{align}
    \mathds{E} e^{-\frac{B(w)}{2}-\frac{w}{8}}=1.
\end{align}
\end{proof}
\begin{lem}
Under $\widetilde{\mathds{P}}_\infty$, the process $S_0e^{B(L(t))}$, $t \in [0,T]$, is a $\mathcal{G}_t$-martingale.
\end{lem}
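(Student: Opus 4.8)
The plan is to verify the two defining properties of a martingale — integrability and the conditional-expectation identity — by conditioning on the whole trajectory of the time change and reducing everything to the exponential martingale of Brownian motion. Write $X_t := S_0 e^{B(L(t))}$ and let $\mathcal{H}$ be the $\sigma$-algebra generated by the subordinator path $\ll \sigma(w) : w \in [0,T] \rr$ (equivalently by the inverse process $L$). Under $\mathds{P}$ the Brownian motion $B$ is independent of $\mathcal{H}$, so conditionally on $\mathcal{H}$ the values $L(s), L(t), L(T)$ become fixed numbers $\ell_s \le \ell_t \le \ell_T$, while $B$ remains a standard Brownian motion. The key algebraic fact I would use throughout is that $e^{\theta B(u) - \theta^2 u/2}$ is a mean-one exponential martingale; in particular $\theta = -\tfrac{1}{2}$ reproduces exactly the Radon--Nikodym density $e^{-B(u)/2 - u/8}$ defining $\widetilde{\mathds{P}}_\infty$, while $\theta = \tfrac{1}{2}$ gives $\mathds{E} e^{B(u)/2} = e^{u/8}$. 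Adaptedness of $X_t$ to $\mathcal{G}_t = \mathcal{F}_{L(t)-}^\infty$ is the measurability remark already made above, since $e^{B(L(t))}$ is constant on the interval immediately preceding $t$.

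For integrability, I would compute $\mathds{E}_{\widetilde{\mathds{P}}_\infty}|X_t| = \mathds{E}\l S_0 e^{B(L(t))} e^{-B(L(T))/2 - L(T)/8} \r$ by first conditioning on $\mathcal{H}$ and on $S_0$. Splitting $B(\ell_T) = B(\ell_t) + (B(\ell_T) - B(\ell_t))$ into independent increments, the conditional expectation factorizes and, using the two exponential-martingale identities above, collapses to $e^{\ell_t/8} \cdot e^{(\ell_T - \ell_t)/8} \cdot e^{-\ell_T/8} = 1$. Taking the outer expectation leaves $\mathds{E}_{\widetilde{\mathds{P}}_\infty}|X_t| = \mathds{E} S_0 < \infty$, mirroring the integrability computation in the semi-Markov martingale lemma.

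For the martingale property, I would fix $0 \le s \le t \le T$ and $A \in \mathcal{G}_s$ and rewrite $\mathds{E}_{\widetilde{\mathds{P}}_\infty}[\mathds{1}_A X_t]$ as a $\mathds{P}$-expectation against the density $e^{-B(L(T))/2 - L(T)/8}$. Conditioning on $\mathcal{H}$, I would first peel off the increment $B(\ell_T) - B(\ell_t)$, which is independent of everything measurable up to operational time $\ell_t$; its contribution $e^{(\ell_T - \ell_t)/8}$ combines with $e^{-\ell_T/8}$ to give $e^{-\ell_t/8}$, reducing the integrand to $e^{-\ell_t/8}\,\mathds{E}[\mathds{1}_A S_0 e^{B(\ell_t)/2}\mid \mathcal{H}]$. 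Then, writing $e^{B(\ell_t)/2} = e^{B(\ell_s)/2} e^{(B(\ell_t) - B(\ell_s))/2}$ and peeling off the second increment (independent of $\mathcal{F}^B_{\ell_s} \ni A$) contributes $e^{(\ell_t - \ell_s)/8}$, which turns $e^{-\ell_t/8}$ into $e^{-\ell_s/8}$. The outcome is $e^{-\ell_s/8}\,\mathds{E}[\mathds{1}_A S_0 e^{B(\ell_s)/2}\mid \mathcal{H}]$, i.e. precisely the analogous expression for $X_s$; undoing the conditioning on $\mathcal{H}$ yields $\mathds{E}_{\widetilde{\mathds{P}}_\infty}[\mathds{1}_A X_t] = \mathds{E}_{\widetilde{\mathds{P}}_\infty}[\mathds{1}_A X_s]$.

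The hard part will be the measure-theoretic bookkeeping behind ``conditioning on $\mathcal{H}$''. Because $\mathcal{G}_s = \mathcal{F}_{L(s)-}^\infty$ is an operational-time filtration stopped at the \emph{random} time $L(s)$, I must justify that, once the subordinator path is fixed in $\mathcal{H}$, an event $A \in \mathcal{G}_s$ really does become measurable with respect to $B$ up to the now-deterministic time $\ell_s = L(s)$, so that the independent-increment factorizations of $B(\ell_t) - B(\ell_s)$ and $B(\ell_T) - B(\ell_t)$ past $A$ are legitimate. This rests on the independence of $B$ and $\sigma$ under $\mathds{P}$ together with the product structure of the operational-time filtration $\mathcal{F}^\infty_w$ generated by $\l S_0 e^{B_u}, \sigma_u \r$ for $u \le w$, and it is the only point requiring genuine care; the remaining steps are the same exponential-martingale manipulations used in the Markovian and semi-Markovian lemmas above.
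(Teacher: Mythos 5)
Your proposal is correct and follows essentially the same route as the paper: the paper likewise conditions on the time-change path (so that $L(s)\le L(t)\le L(T)$ become deterministic), uses the product structure $F=\mathfrak{F}_B\cap\mathfrak{F}_\sigma$ of events in $\mathcal{G}_s=\mathcal{F}_{L(s)^-}^\infty$ together with the independence of $B$ and $\sigma$, and reduces the martingale identity to the fact that $e^{B(w)}$ is a martingale under the Cameron--Martin measure with density $e^{-B(w_2)/2-w_2/8}$ --- which you simply carry out by hand via the independent-increment identities $\mathds{E}\,e^{\theta B(u)}=e^{\theta^2 u/2}$ rather than citing the change of measure. The measure-theoretic point you flag (that conditionally on the subordinator path an event of $\mathcal{G}_s$ has Brownian part in $\mathcal{F}^B_{L(s,\omega)}$) is exactly the step the paper handles by asserting the factorized form of $F$, so your treatment matches the paper's level of rigor there as well.
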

\begin{proof}
Measurability with respect to $\mathcal{G}_t$ is clear since for any $t$, the event $\ll L(t) \in B \rr$, $B$ Borel, is in $\mathcal{F}_{L(t)^-}$ and Brownian motion is continuous, so that $B(L(t))=B(L(t)^-)$ on any path.
By a simple conditioning argument, using independence between $B(t)$ and $\sigma(t)$ and the fact that $t \mapsto L(t)$ is almost surely non decreasing, we have that
\begin{align}
    \mathds{E}_{\widetilde{\mathds{P}}_\infty} S_0 e^{B(t)} \, = \,&  \mathds{E} S_0e^{B(L(t))} e^{-\frac{B(L(T))}{2}-\frac{L(T)}{8}} \notag \\
    = \, &\int \mathds{E} S_0e^{B(w_1)} e^{-\frac{B(w_2)}{2}-\frac{w_2}{8}} \mathds{P} \l L(t) \in dw_1, L(T) \in dw_2 \r \notag \\
    = \, &\int \mathds{E}_{\widetilde{\mathds{P}}_\infty^\star} S_0 e^{B(w_1)}  \mathds{P} \l L(t) \in dw_1, L(T) \in dw_2 \r \notag \\
    = \, & \mathds{E}_{\widetilde{\mathds{P}}_\infty} S_0,
\end{align}
where we used that $e^{B(w)}$, under $\widetilde{\mathds{P}}_\infty^\star$, has expectation 1.
Let us now consider $s < t$ and observe that the events $F \in \mathcal{F}_{L(s)^-}$ depend on the paths of $(B(w), \sigma(w^-))$ up to time $L(s)$, and have the form $\mathfrak{F}_B \cap \mathfrak{F}_\sigma$ where $\mathfrak{F}_B \in \mathcal{F}_{L(s, \omega)}^B$ and $\mathfrak{F}_\sigma \in \mathcal{F}_{L(s, \omega)}^-$ where $\mathcal{F}_s^-$ is the filtration generated by $\sigma(s^-)$. Hence, by conditioning and using independence between $B(t)$ and $\sigma(t)$, we get
\begin{align}
   & \mathds{E}_{\widetilde{\mathds{P}}_\infty} \mathds{1}_F S_0e^{B(L(t))} \notag \\= \,& \mathds{E} \mathds{1}_FS_0e^{B(L(t))} e^{-\frac{B(L(T))}{2}-\frac{L(T)}{8}} \notag \\ 
   = \, & \mathds{E} \mathds{E}\left[\mathds{1}_{\mathfrak{F}_B}\mathds{1}_{\mathfrak{F}_\sigma}S_0 e^{B(L(t))} e^{-\frac{B(L(T))}{2}-\frac{L(T)}{8}} \mid L(y), y \in [0,T] \right] \notag \\
       = \, &  \mathds{E} \mathds{E} \left[ \mathds{1}_{\mathfrak{F}_B} S_0 e^{B(L(t))} e^{-\frac{B(L(T))}{2}-\frac{L(T)}{8}}\mid L(y), y \in [0,T]\right] \mathds{E} \left[ \mathds{1}_{{\mathfrak{F}_\sigma}}\mid L(y), y \in [0,T] \right] \notag \\
    = \, & \mathds{E} \mathds{E} \left[ \mathds{1}_{\mathfrak{F}_B} S_0 e^{B(L(s))} e^{-\frac{B(L(T))}{2}-\frac{L(T)}{8}}\mid L(y), y \in [0,T]\right] \mathds{E} \left[ \mathds{1}_{{\mathfrak{F}_\sigma}}\mid L(y), y \in [0,T]\right]\notag \\
    = \, &\mathds{E} \mathds{E}\left[\mathds{1}_{\mathfrak{F}_B}\mathds{1}_{\mathfrak{F}_\sigma}S_0 e^{B(L(s))} e^{-\frac{B(L(T))}{2}-\frac{L(T)}{8}} \mid L(y), y \in [0,T] \right]  \notag \\
    = \, &  \mathds{E}_{\widetilde{\mathds{P}}_\infty} \mathds{1}_F S_0e^{B(L(s))}.
\end{align}
In the third last equality, we used the fact that, as a consequence of the independence between $\sigma$ and $B$, the conditional expectation (for fixed paths of $L(t))$ coincides with the expectation with respect to the Cameron-Martin measure under which $e^{B(t)}$ is a martingale.
\end{proof}
\noindent The following result relies on the same conditioning argument used above and based on the independence under $\mathds{P}$ between $\sigma(t)$ and the Brownian motion $B(t)$.
\begin{lem}
We have that, for any Borel set $B$,
\begin{align}
   & \widetilde{\mathds{P}}_\infty \l e^{B(L(t))} \in B \r \, = \, \mathds{P} \l e^{B(L(t))-\frac{L(t)}{2}} \in B \r, \qquad t \in [0,T].
   \label{67}
   \end{align}
   \end{lem}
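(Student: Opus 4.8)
The plan is to reduce the claim to the Cameron--Martin theorem for Brownian motion by conditioning on the path of the time-change, exactly in the spirit of the conditioning argument used in the two preceding lemmas. Fix a bounded Borel function $g$ (the case $g=\mathds{1}_B$ gives the statement) and start from the definition of $\widetilde{\mathds{P}}_\infty$, writing
\begin{align}
\mathds{E}_{\widetilde{\mathds{P}}_\infty} g\l e^{B(L(t))} \r \, = \, \mathds{E} \left[ g\l e^{B(L(t))} \r e^{-\frac{B(L(T))}{2}-\frac{L(T)}{8}} \right]. \notag
\end{align}
Because $B$ and $\sigma$ (and hence $L$) are independent under $\mathds{P}$, I would condition on the whole path $\ll L(y): y \in [0,T] \rr$; on this event the values $w_1:=L(t)$ and $w_2:=L(T)$ act as deterministic constants, and since $t \leq T$ and $L$ is nondecreasing one has $w_1 \leq w_2$. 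The inner, purely Brownian, expectation to be evaluated is therefore $\mathds{E}\l g(e^{B(w_1)})\, e^{-B(w_2)/2 - w_2/8} \r$.

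For the key computation I would exploit the independent increments of $B$ to split the Girsanov density as $e^{-B(w_2)/2 - w_2/8} = e^{-B(w_1)/2 - w_1/8}\, e^{-(B(w_2)-B(w_1))/2 - (w_2-w_1)/8}$. The second factor is independent of $B(w_1)$ and has expectation $1$, so it integrates out, leaving $\mathds{E}\l g(e^{B(w_1)})\, e^{-B(w_1)/2 - w_1/8} \r$. By the Cameron--Martin theorem (as in \cite[Theorem 16.22]{kallenberg}, already invoked in this section) this equals $\mathds{E}\l g(e^{B(w_1) - w_1/2}) \r$, since the tilt $e^{-B(w_1)/2 - w_1/8}$ turns the $N(0,w_1)$ law of $B(w_1)$ into the $N(-w_1/2,w_1)$ law, i.e.\ the law of $B(w_1) - w_1/2$ under $\mathds{P}$.

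Re-expressing $w_1 = L(t)$ and integrating back over the law of $L$ (again using the independence of $B$ and $\sigma$) yields $\mathds{E}_{\widetilde{\mathds{P}}_\infty} g(e^{B(L(t))}) = \mathds{E}\l g(e^{B(L(t)) - L(t)/2}) \r$, and taking $g=\mathds{1}_B$ gives \eqref{67}. The only delicate point is the bookkeeping in the conditioning step: one must check that, once the full path of $L$ is frozen, the terminal factor evaluated at $w_2 = L(T)$ influences only the increment of $B$ after $w_1 = L(t)$, so that the marginal law of $B(L(t))$ is governed purely by the Cameron--Martin tilt at time $w_1$. This is precisely what the independence of $B$ and $\sigma$, together with the monotonicity $L(t) \leq L(T)$, guarantee, so I expect no genuine difficulty beyond careful measure-theoretic handling.
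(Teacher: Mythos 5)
Your proposal is correct and follows essentially the same route as the paper: condition on the time change via the joint law of $(L(t),L(T))$ using the independence of $B$ and $\sigma$ under $\mathds{P}$, then apply the Cameron--Martin tilt at the frozen times $w_1\leq w_2$. The only difference is cosmetic: you make the Cameron--Martin step explicit by factoring the density along the increment $B(w_2)-B(w_1)$ and integrating out the mean-one factor, whereas the paper cites the identity $\mathds{E}\left[\mathds{1}_{\left[e^{B(w_1)}\in B\right]}e^{-\frac{B(w_2)}{2}-\frac{w_2}{8}}\right]=\mathds{P}\left(e^{B(w_1)-\frac{w_1}{2}}\in B\right)$ for $w_2\geq w_1$ directly.
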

   \begin{proof}
 Using independence under $\mathds{P}$ between $\sigma(t)$ and the Brownian motion $B(t)$, we have
 \begin{align}
     \mathds{E}_{\widetilde{\P}_\infty}\mathds{1}_{e^{B(L(t))} \in B} \, = \,& \int \int \mathds{E} \mathds{1}_{\left[ e^{B(w_1)} \in B \right]} e^{-\frac{B(w_2)}{2}-\frac{w_2}{8}} \P \l L(t) \in dw_1, L(T) \in dw_2 \r \notag \\
     = \, & \int \mathds{E} \mathds{1}_{\left[ e^{B(w_1)-\frac{w_1}{2}} \in B \right]} \P \l L(t) \in dw_1 \r \notag \\
     = \, & \mathds{E} \mathds{1}_{\left[ e^{B(L(t))-\frac{L(t)}{2}} \in B \right]}.
 \end{align}
We also used that, under the Cameron-Martin measure, one has that, for any $w_2 \geq w_1 \geq 0$
 \begin{align}
     \P^\star_{\infty} \l e^{B(w_1)} \in B \r \, = & \E \mathds{1}_{\left[ e^{B(w_1)} \in B \right]} e^{-\frac{B(w_2)}{2}-\frac{w_2}{8}} \notag \\
     = \, & \E\mathds{1}_{\left[ e^{B(w_1)-\frac{w_1}{2}} \in B \right]}   \notag \\
      = \, & \P \l e^{B(w_1) -\frac{w_1}{2} }\in B \r.
 \end{align}
   \end{proof}

   \noindent In the next results the Markov property of the process $\l e^{B(L(t))}, \gamma^\infty (t) \r$, with respect to the filtration $\mathcal{G}_t$, will be of crucial importance.
\begin{thm}
\label{teoremagenerale}
We have that, for $z=T-t$, $t \in [0,T]$,
\begin{align}
    \mathds{E}_{\widetilde{\mathds{P}}_\infty} \left[ \l S_0 e^{B(L(T))} -K \r^+ \mid \mathcal{G}_t \right] \, = \, \mathds{E}_{\widetilde{\mathds{P}}_\infty}^{e^{B(L(t))}, \gamma^\infty(t) } \l S_0e^{B(L(z))}-K \r^+.
    \label{68}
\end{align}
Moreover, let $q^\prime(x,y,z):= \mathds{E}_{\widetilde{\mathds{P}}_\infty}^{x, y } \l S_0e^{B(L(z))}-K \r^+$. Then, for any $0<y<z$,
\begin{align}
    q^\prime(x,y,z) \, = \, (x-K)^+ \frac{\bar{\nu}(y+z)}{\bar{\nu}(y)} + \int_0^z q^\prime(x,0,z-\tau) \frac{\nu(y+\tau)}{\bar{\nu}(y)} d\tau.
    \label{eqthelimproc}
\end{align}
\end{thm}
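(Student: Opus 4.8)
The plan is to mirror the proof of Theorem \ref{teoeqrinnovo}, replacing the discrete renewal structure of the semi-Markov chain by the overshoot--undershoot structure of the inverse subordinator. Equation \eqref{68} is the pure Markov step: since $\l e^{B(L(t))}, \gamma^\infty(t) \r$ is a time-homogeneous Markov process with respect to $\mathcal{G}_t$ under $\widetilde{\mathds{P}}_\infty$ (the property emphasised just before the statement), conditioning on $\mathcal{G}_t$ reduces to conditioning on the current state $\l e^{B(L(t))}, \gamma^\infty(t)\r$, and homogeneity replaces the interval $[t,T]$ by $[0,z]$ with $z=T-t$. This yields \eqref{68} with no computation beyond invoking the Markov property.

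For the renewal identity \eqref{eqthelimproc} I would decompose $q^\prime(x,y,z)=\mathds{E}_{\widetilde{\mathds{P}}_\infty}^{x,y}\l S_0 e^{B(L(z))}-K\r^+$ according to whether the first renewal after time $0$ -- equivalently the first discontinuity of $L$, i.e.\ the exhaustion of the residual lifetime $\mathcal{J}^\infty(0)$ -- occurs before or after the horizon $z$, exactly as in \eqref{probab}. On $\ll \mathcal{J}^\infty(0)>z\rr$ the inverse subordinator, and hence the underlying $S_0 e^{B(L(\cdot))}$, is frozen at its current value $x$ on $[0,z]$, so the payoff is $(x-K)^+$ and the contribution is $(x-K)^+\,\widetilde{\mathds{P}}_\infty^{x,y}\l \mathcal{J}^\infty(0)>z\r$. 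On $\ll \mathcal{J}^\infty(0)=\tau\leq z\rr$ the regeneration property at the renewal time $\tau$ restarts the process with age reset to $0$; crucially, because $B$ is continuous and $L$ has no jump across a renewal, there is no spatial jump at $\tau$, so the process restarts from the same value $x$, giving the contribution $\int_0^z q^\prime(x,0,z-\tau)\,\widetilde{\mathds{P}}_\infty^{x,y}\l \mathcal{J}^\infty(0)\in d\tau\r$. This is precisely where the continuum equation simplifies relative to \eqref{renewal}: the transition kernel $\widetilde{h}(x,dy)$ of the discrete case collapses to the identity and the inner integral over $y$ disappears.

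It then remains to identify the two laws above. The claim is that, under $\widetilde{\mathds{P}}_\infty^{x,y}$,
\begin{align}
\widetilde{\mathds{P}}_\infty^{x,y}\l \mathcal{J}^\infty(0)>z\r \, = \, \frac{\bar{\nu}(y+z)}{\bar{\nu}(y)}, \qquad \widetilde{\mathds{P}}_\infty^{x,y}\l \mathcal{J}^\infty(0)\in d\tau\r \, = \, \frac{\nu(y+\tau)}{\bar{\nu}(y)}\,d\tau, \notag
\end{align}
the exact continuum analogues of \eqref{residuallifetime}, with $\overline{F}_J$ and $f_J$ replaced by the tail $\bar{\nu}$ and the density of the L\'evy measure $\nu$. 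These are the conditional laws of the overshoot given the undershoot $y$ for $\sigma$; they follow from the joint overshoot--undershoot law of a driftless infinite-activity subordinator, the potential-measure weight in that joint law cancelling upon conditioning on the age and leaving the size-biased factor $\nu(y+\tau)$. Since, by the first lemma in this section, $\sigma$ has the same law under $\widetilde{\mathds{P}}_\infty$ as under $\mathds{P}$, and since $\gamma^\infty$ and $\mathcal{J}^\infty$ are functionals of $\sigma$ alone, these laws are unaffected by the change of measure; the measure change acts only on the Brownian factor, which by \eqref{67} and the independence of $B$ and $\sigma$ enters solely through the value $x$ carried across the renewal.

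The main obstacle is the simultaneous rigorous justification of the regeneration step and of these residual-lifetime laws: one must check that the first renewal time $\tau=\mathcal{J}^\infty(0)$ is a genuine regeneration time for $\l e^{B(L(t))},\gamma^\infty(t)\r$ even though the renewal set $\mathcal{R}$ is almost surely dense (infinite activity), and that starting the Markov process with initial age $y$ produces exactly the overshoot law above. Once these two points are secured, the remaining ingredients -- the freezing of $S_0 e^{B(L(\cdot))}$ on flat stretches of $L$, the absence of a spatial jump at $\tau$, and the invariance of the $\sigma$-dependent quantities under $\widetilde{\mathds{P}}_\infty$ -- are routine, and assembling them reproduces \eqref{eqthelimproc}.
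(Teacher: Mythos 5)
Your strategy is sound and every formula you assert is correct, but your route through \eqref{eqthelimproc} is genuinely different from the paper's. You rebuild the renewal equation from scratch as a continuum analogue of the proof of Theorem \ref{teoeqrinnovo}: condition on whether $\mathcal{J}^\infty(0)$ exceeds $z$, identify $\widetilde{\mathds{P}}^{x,y}_\infty\l \mathcal{J}^\infty(0)>z\r=\bar{\nu}(y+z)/\bar{\nu}(y)$ and $\widetilde{\mathds{P}}^{x,y}_\infty\l \mathcal{J}^\infty(0)\in d\tau\r=\nu(y+\tau)\,d\tau/\bar{\nu}(y)$ from the overshoot--undershoot law \eqref{jointou}, and use the absence of a spatial jump to collapse $\widetilde{h}(x,dy)$ to $\delta_x$. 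The paper instead transfers everything to $\mathds{P}$ via \eqref{67} (so the underlying becomes $e^{B(L(t))-L(t)/2}$), observes that $\l e^{B(w)-w/2},\sigma(w)\r$ is a jump-diffusion whose generator has jump kernel $K(dx,d\tau)=\delta_0(dx)\nu(d\tau)$, and then simply quotes \cite[Eq.~(4.2)]{meerstra}, which is precisely the first-renewal decomposition you propose, already proved in that generality; specializing the kernel yields \eqref{eqthelimproc} in one line. What your route buys is self-containedness and transparency --- indeed your overshoot computation is exactly the one the paper itself performs later, in the remark around \eqref{distrreslif}, to interpret $\mathrm{h}_y$ as the law of the residual lifetime. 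What the paper's route buys is that the step you flag as ``the main obstacle'' --- that $\mathcal{J}^\infty(0)$ is a genuine regeneration time even though the renewal set $\mathcal{R}$ is a.s.\ perfect --- never has to be confronted directly, because it is the content of \cite[Section 4]{meerstra}.

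That flagged obstacle is the one real hole in your write-up, together with a smaller one in your treatment of \eqref{68}: the Markov property of $\l e^{B(L(t))},\gamma^\infty(t)\r$ under $\widetilde{\mathds{P}}_\infty$ is not available ``just before the statement'' as an established fact --- the paper only announces that it will be crucial, and then proves what is needed inside the theorem. What \cite{meerstra} supplies is the Markov property of the drifted pair $\l e^{B(L(t))-L(t)/2},\gamma^\infty(t)\r$ under $\mathds{P}$; carrying it across the tilt requires the paper's event-splitting argument, writing $F=\mathfrak{F}_B\cap\mathfrak{F}_\sigma\in\mathcal{G}_t$, conditioning on the whole path of $L$, and applying Cameron--Martin to the Brownian factor (the computation culminating in \eqref{17}). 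Your third paragraph gestures at exactly this (``the measure change acts only on the Brownian factor''), so the idea is present, but as written both \eqref{68} and the regeneration step rest on assertions rather than proofs; completing them would amount to re-proving the relevant parts of Meerschaert--Straka, and citing them, as the paper does, is the shorter path.
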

\begin{proof}
As a consequence of \eqref{67} we have that, under $\widetilde{\mathds{P}}_\infty$, the process $e^{B(L(t))}$ has the same distribution of the time change of $e^{B(w)-\frac{w}{2}}$ with the inverse of an independent subordinator, under $\mathds{P}$. The process $\l e^{B(w)-\frac{w}{2}}, \sigma (w) \r$ is a jump-diffusion process of the type discussed in \cite{meerstra} and whose infinitesimal generator has a jump kernel (see \cite[eq. (6.42)]{applebaum})
\begin{align}
    K(dx, dy) \, = \, \delta_0(dx) \nu(dy).
\end{align}
Therefore, the first equality comes from the homogeneous Markov property of the process $\l e^{B(L(t))-t/2}, \gamma^\infty(t) \r$ which is a consequence of \cite[Section 4]{meerstra}. A consequence of the results in that paper is that $\l B(L(t)^-), \gamma^\infty (t) \r$ have the Markov property with respect to $\mathcal{G}_t$. However, since $B(t)$ has continuous paths, we have that $B \l L(t)^- \r=B \l L(t) \r$, for any path. Hence, we can use the Markov property proved in \cite[Section 4]{meerstra}, in the following way. Let $F \in \mathcal{G}_t$, then, as above, $F = \mathfrak{F}_B \cap \mathfrak{F}_\sigma$ for $ \mathfrak{F}_B \in \mathcal{F}_{L(t)}^B$ and $\mathfrak{F}_\sigma \in \mathcal{F}_{L(t)^-}$. Therefore, by independence between $B$ and $\sigma$ under $\P$
\begin{align}
    &\E_{\widetilde{\P}_\infty} \mathds{1}_F e^{B(L(T))} \notag \\ = \, &\E \mathds{E} \left[ \mathds{1}_{\mathfrak{F}_B} e^{B(L(T))} e^{-\frac{B(L(T))}{2}-\frac{L(T)}{2}}\mid L(y), y \in [0,T] \right] \mathds{E} \left[ \mathds{1}_{\mathfrak{F}_\sigma} \mid L(y), y \in [0,T]  \right] \notag \\
    = \, & \mathds{E}  \mathds{E} \left[ \mathds{1}_{\bar{\mathfrak{F}}_B} e^{B(L(T))-L(T)/2} \mid L(y), y \in [0,T] \right] \mathds{E} \left[ \mathds{1}_{\mathfrak{F}_\sigma} \mid L(y), y \in [0,T]  \right] \notag \\
    = \, & \mathds{E} \mathds{1}_{\overline{F}} e^{B(L(T))-L(T)/2},
    \label{17}
\end{align}
where $\overline{\mathfrak{F}}_B$ represents the corresponding event in $\mathcal{F}_{L(t, \omega)}^B$ for the drifted Brownian motion after the Cameron-Martin change of measure, and $\overline{F}= \overline{\mathfrak{F}}_B \cap \mathfrak{F}_\sigma$. It follows that 
\begin{align}
   & \E_{\widetilde{\P}_\infty}\mathds{1}_{F} \E_{\widetilde{\P}_\infty} \left[ \l S_0 e^{B(L(T))}-K \r^+ \mid \mathcal{G}_t\right] \notag \\ = \, & \E_{\widetilde{\P}_\infty} \mathds{1}_F \l S_0 e^{B(L(T))}-K \r^+ \notag \\
    = \, & \E \mathds{1}_{\overline{F}} \l S_0 e^{B(L(T))-L(T)/2} -K \r^+ \notag \\
    = \, & \mathds{E} \mathds{1}_{\overline{F}} \mathds{E} \left[ \l S_0 e^{B(L(T))-L(T)/2} -K \r^+ \mid \mathcal{G}_t \right] \notag \\
    = \, & \mathds{E} \mathds{1}_{\overline{F}} \mathds{E}^{e^{B(L(t))-L(t)/2} , \gamma^\infty (t)} \l S_0 e^{B(L(z))-L(z)/2}-K \r^+ \notag \\
    = \, & \E_{\widetilde{\P}_\infty} \mathds{1}_{F} \E_{\widetilde{\P}_\infty}^{e^{B(L(t))}, \gamma^\infty (t)} \l S_0 e^{B(L(z))} -K \r^+ ,
\end{align}
where we used \eqref{17} in the second step, the Markov property of $\l e^{B(L(t))-L(t)/2}, \gamma^\infty (t) \r$ in the second last step, while the last step can be justified in the very same spirit as \eqref{17}.
Therefore
\begin{align}
   \E_{\widetilde{\P}_\infty} \left[ \l S_0 e^{B(L(T))}-K \r^+ \mid \mathcal{G}_t\right] \, = \,   \E_{\widetilde{\P}_\infty}^{e^{B(L(t))}, \gamma^\infty (t)} \l S_0 e^{B(L(z))} -K \r^+ .
\end{align}
In order to verify \eqref{eqthelimproc} we can use \cite[Eq. (4.2)]{meerstra}, which says
\begin{align}
    &\mathds{E}^{x,y} \l S_0 e^{B(L(t))-\frac{L(t)}{2})} -K \r^+ \notag \\ = \, &(x-K)^+ \frac{K(\mathbb{R}, [y+t, \infty))}{K(\mathbb{R}, [y, \infty))} \, \notag \\ &+ \int_{\mathbb{R}}\int_y^{y+t} \mathds{E}^{x+w,0} \l S_0e^{B(L(y+t-\tau))-\frac{L(y+t-\tau)}{2}} -K \r^+ \frac{K(dw, d\tau)}{K(\mathbb{R}, [y, \infty))} \notag \\
   = \, & (x-K)^+ \frac{\bar{\nu}(y+t)}{\bar{\nu}(y)} \, + \int_0^t \mathds{E}^{x,0} \l S_0 e^{B(L(t-\tau))-\frac{L(t-\tau)}{2}} -K \r^+ \frac{\nu(d\tau+y)}{\bar{\nu}(y)}.
\end{align}
Using \eqref{67} then yields the result.
\end{proof}

\noindent In the following result, we prove the convergence of the option price for the semi-Markov process $S(t)$, under $\widetilde{\mathds{P}}$, to the option price for the limiting semi-Markov process $e^{B(L(t))}$, under $\widetilde{\mathds{P}}_\infty$. Since the two processes are, under $\mathds{P}$, respectively the multiplicative CTRW $S_0\prod_{i=1}^{N(t)} e^{Y_i-\frac{\sigma_m^2}{2}}$ and the time-changed process $e^{B(L(t))-\frac{L(t)}{2}}$, convergence in distribution could be justified by the theory of CTRW limits (see the discussion in \cite[Section 2]{meerstra}). Here, we study the convergence of the expectation $\mathds{E}_{\widetilde{\mathds{P}}} \left[ \l S(T)-K\r^+ \mid \mathcal{F}_t \right]$. Heuristically, our argument is based on the following considerations. Let $z = T-t$, starting from Theorem \ref{teoeqrinnovo} we have
\begin{align}
  &  C_m(x,y,z) \notag \\ = \,& \l x-K \r^+ \frac{\overline{F}_J(y+z)}{\overline{F}_J(y)} \, + \, \int_0^z \int_0^\infty C_m \l y, 0, z-\tau  \r \, \widetilde{h}^{\sigma^2_m}(x, dy) \, \frac{f_J(y+\tau)}{\overline{F}_J(y)}d\tau.
    \label{limdiscorso}
\end{align}
In view of the time-change interpretation, as discussed in \cite{meerpoisson}, we have that, for any $t>0$,
\begin{align}
    \overline{F}_J(t) \, = \, \P \l \sigma (E_1/\lambda_m) > t \r
\end{align}
where $E_1$ represents an exponential r.v. independent from the subordinator $\sigma(t)$. It follows that
    \begin{align}
    \frac{\overline{F}_J(y+z)}{\overline{F}_J(y)} \, = \, & \frac{ \lambda_m \P \l \sigma (E_1/\lambda_m) > y+z \r}{\lambda_m\P \l \sigma (E_1/\lambda_m) > y \r} \notag \\
    = \, &  \frac{\int_0^\infty (\lambda_m/w) \, \P \l \sigma(w/\lambda_m) > y+z \r \, w e^{-w} \, dw}{\int_0^\infty (\lambda_m/w)\P \l \sigma(w/\lambda_m) > y \r \, w e^{-w} \, dw} \notag \\
    \stackrel{m \to \infty}{\to} & \frac{\bar{\nu}(y+z)}{\bar{\nu}(y)} 
\end{align}
where we used the convergence
\begin{align}
    t^{-1}\P \l \sigma(t) \in ds \r \stackrel{t \to 0}{\to} \nu (ds) 
\end{align}
which holds in the vague sense for any L\'evy process (e.g., \cite[page 39]{bertoinb}). Then note that
\begin{align}
    C_m (y,0,z) \, = \, \int_0^\infty C^\star_m (y,0,s) \P \l L(z) \in ds \r
\end{align}
where, for $z=T-t$,
\begin{align}
    C^\star_m (S^\star(t),0,z) \, = \, \mathds{E}^{S^\star (t)} \l S^\star(z) -K\r^+ \, = \, \mathds{\E} \left[ \l S^\star (T)-K \r^+ \mid \mathcal{F}_t \right],
\end{align}
and thus $C_m^\star (y,0,z) \to C_{\text{BS}}(y,0,z)$, in other words, the price for the multiplicative process converges to the Black and Scholes price. Moreover, one sees that $\widetilde{h}^{\sigma_m^2}(x, dy) \to \delta_x(dy)$, as $\sigma_m^2 \to 0$. Putting these pieces together, it follows that, moving limits inside the integrals in \eqref{limdiscorso}, one gets
\begin{align}
\label{conjecture}
\lim_{m \to \infty} C_m(x,y,z) \, = \, (x-K)^+ \frac{\bar{\nu}(y+z)}{\bar{\nu}(y)} + \int_0^z C_\infty(x,0,z-\tau) \frac{\nu(y+d\tau)}{\bar{\nu}(y)},
\end{align}
where
\begin{align}
  &  C_\infty (x,0,z) \notag \\ = \, & \int_0^\infty \mathds{E}^x \l S_0 e^{B(s)-s/2} -K \r^+ \, \P \l L(z) \in ds \r \, = \,\mathds{\E}^{x,0}_{\widetilde{\mathds{P}}} \l S_0e^{B(L(z))-L(z)/2} -K \r^+.
\end{align}
The above argument is not rigorous and needs refinement. In this paper, we make the conjecture of \eqref{conjecture} rigorous when the Bernstein function is $f(\phi) = \phi^\alpha$, $\alpha \in (0,1)$, in other words in the case when the subordinator is $\alpha$-stable.
\begin{thm}
\label{theoremML}
Suppose that $f(\phi)=\phi^\alpha$, $\alpha \in (0,1)$. With $z = T-t$, let $q(x,y,z):=\lim_{m \to \infty}C_m(x,y,z)$.
We have that $q(x,y,z)$ exists and sastisfies, for any $z>0$ and $0<y<z$, the renewal type equation
\begin{align}
q(x,y,z) \, = \, (x-K)^+ \frac{y^\alpha}{(z+y)^\alpha}+\int_0^z  q(w,0,z-\tau) \frac{ \alpha y^\alpha }{(y+\tau)^{\alpha+1}} d\tau
\label{reneqstatement}
\end{align}
and further $q(x,y,z) = q^\prime(x,y,z)$ for any $(x,y,z) \in \mathbb{R} \times (0,z) \times [0,T]$ and $T>0$.
\end{thm}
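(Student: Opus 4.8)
\emph{Proof plan.} The strategy is to pass to the limit $m\to\infty$ directly in the renewal equation \eqref{limdiscorso}, using the explicit stable structure to identify every factor, and then to recognise the limit as the solution $q'$ of \eqref{eqthelimproc}. The first ingredient is the Mittag--Leffler input. In the stable case $\overline{F}_J(t)=E_\alpha(-\lambda_m t^\alpha)$, and since $\lambda_m\uparrow\infty$ we may invoke the large-argument asymptotics $E_\alpha(-u)\sim(\Gamma(1-\alpha)u)^{-1}$ together with the matching asymptotics of the density $e_\alpha$, giving $\overline{F}_J(t)\sim(\Gamma(1-\alpha)\lambda_m)^{-1}t^{-\alpha}$ and $f_J(t)\sim(\Gamma(1-\alpha)\lambda_m)^{-1}\alpha\,t^{-\alpha-1}$. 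Hence, for fixed $y,z,\tau$,
\[
\frac{\overline{F}_J(y+z)}{\overline{F}_J(y)}\longrightarrow\frac{y^\alpha}{(y+z)^\alpha},
\qquad
\frac{f_J(y+\tau)}{\overline{F}_J(y)}\longrightarrow\frac{\alpha\,y^\alpha}{(y+\tau)^{\alpha+1}},
\]
which are exactly the coefficient and kernel in \eqref{reneqstatement}; they also coincide with $\bar\nu(y+z)/\bar\nu(y)$ and $\nu(y+\tau)/\bar\nu(y)$ for the stable L\'evy measure, linking the statement to \eqref{eqthelimproc}.

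Next I would settle the age-zero case, which plays the role of a base step. By the time-change representation and the independence of the (fixed) inverse $\alpha$-stable subordinator $L$ from the embedded chain, $C_m(w,0,s)=\int_0^\infty C_m^\star(w,r)\,\mathds{P}(L(s)\in dr)$, where $C_m^\star$ is the Markovian (Merton) price and the measure $\mathds{P}(L(s)\in dr)$ is free of $m$. The Donsker analysis of Section~\ref{Markovian} gives $C_m^\star(w,r)\to C_{\text{BS}}(w,r)$ pointwise, while the martingale property under $\widetilde{\mathds{P}}^\star$ yields the uniform bound $C_m^\star(w,r)\le w$; bounded convergence then gives $C_m(w,0,s)\to q(w,0,s):=\int_0^\infty C_{\text{BS}}(w,r)\,\mathds{P}(L(s)\in dr)$. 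Rewriting $e^{B(L(s))}$ under $\widetilde{\mathds{P}}_\infty$ as $e^{B(L(s))-L(s)/2}$ under $\mathds{P}$ via \eqref{67} and conditioning on $L(s)$ identifies $q(w,0,s)=q'(w,0,s)$, so the base step already proves $q=q'$ at age zero.

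Feeding this into \eqref{limdiscorso} for $y>0$, the first term converges by the first step. For the integral term set $A_m(\tau):=\int_0^\infty C_m(w,0,z-\tau)\,\widetilde{h}^{\sigma_m^2}(x,dw)$. Since $w\mapsto C_m(w,0,s)=\mathds{E}[(wM-K)^+]$ is convex, its pointwise limit from the base step is automatically locally uniform; as $\widetilde{h}^{\sigma_m^2}(x,\cdot)\Rightarrow\delta_x$ and the family $\{xe^{Y_1}\}_m$ is $L^2$-bounded, hence uniformly integrable, one gets $A_m(\tau)\to q(x,0,z-\tau)$ for each $\tau$, with $0\le A_m(\tau)\le x$. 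To interchange limit and $\tau$-integration I would split off the kernel $k_m(\tau):=f_J(y+\tau)/\overline{F}_J(y)$: the $k_m$ are nonnegative, converge pointwise to $k_\infty(\tau):=\alpha y^\alpha/(y+\tau)^{\alpha+1}$, and satisfy $\int_0^z k_m=1-\overline{F}_J(y+z)/\overline{F}_J(y)\to\int_0^z k_\infty$, so Scheff\'e's lemma gives $\int_0^z|k_m-k_\infty|\to0$; combined with the bound $A_m\le x$ and dominated convergence for the $A_m k_\infty$ piece, the integral converges to $\int_0^z q(x,0,z-\tau)\,k_\infty(\tau)\,d\tau$. This shows simultaneously that $q(x,y,z)=\lim_m C_m(x,y,z)$ exists and obeys \eqref{reneqstatement}. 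Since $q'$ solves \eqref{eqthelimproc}, which in the stable case is precisely \eqref{reneqstatement} and which expresses the age-$y$ value explicitly through the already-matched age-zero data, the identity $q=q'$ on $\mathbb{R}\times(0,z)\times[0,T]$ follows.

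The main obstacle is exactly the integral term of the last step, where both the integrand $C_m(\cdot,0,\cdot)$ and the transition measure $\widetilde{h}^{\sigma_m^2}(x,\cdot)$ depend on $m$ and must be controlled together. The two devices that make it go through are the convexity upgrade of pointwise to locally uniform convergence, which handles the joint limit of integrand and measure, and Scheff\'e's lemma for the stable kernels, which sidesteps the need for a single dominating function for the densities $f_J$, whose prefactor blows up like $\lambda_m$.
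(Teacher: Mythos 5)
Your proposal is correct, and it follows the paper's overall strategy---pass to the limit term by term in the renewal equation of Theorem \ref{teoeqrinnovo} and match the result with \eqref{eqthelimproc}---but at the two genuinely delicate interchange-of-limits steps you use different devices from the paper, and both of yours work. For the inner integral $\int_0^\infty C_m(w,0,z-\tau)\,\widetilde h^{\sigma_m^2}(x,dw)$, the paper stays inside the tilted measures: it rewrites the quantity through the joint law of $(L(t),L(T))$, identifies it with $\mathds{E}_{\widetilde{\mathds{P}}^\star}^{x,0}\mathds{E}_{\widetilde{\mathds{P}}^\star}^{S_1,0}(S^\star(w_1)-K)^+$, and passes to the limit via $L^2$-boundedness (uniform integrability) together with $S_1\to x$ in probability. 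You instead first settle the age-zero limit $C_m(w,0,s)\to q'(w,0,s)$ by conditioning on the $m$-free law of $L(s)$ and bounded convergence (using the martingale bound $C_m^\star\le w$ and \eqref{67} for the identification), then upgrade pointwise to locally uniform convergence in $w$ by convexity of $w\mapsto C_m(w,0,s)$, and combine this with $\widetilde h^{\sigma_m^2}(x,\cdot)\Rightarrow\delta_x$ and the $L^2$ tail control; this is a clean, standard alternative that handles the joint limit of integrand and measure without re-entering the change-of-measure computations. For the $\tau$-integration, the paper constructs an explicit dominating bound for $f_J(y+\tau)$ from the series representation of the stable density $\mu(x,t)$ (a somewhat delicate point, since the kernels' prefactor scales like $\lambda_m$); your Scheff\'e argument---nonnegative kernels $k_m$ converging pointwise, with total masses $\int_0^z k_m=1-\overline F_J(y+z)/\overline F_J(y)$ converging to $\int_0^z k_\infty$ (which is exactly the first-term limit you already proved), hence $L^1$ convergence, combined with the uniform bound $A_m\le x$ (the paper's \eqref{boundprezzo})---is more elementary and sidesteps the domination issue entirely. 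Two minor glosses: the asymptotics of the density $e_\alpha$ do not follow from the tail asymptotics of $E_\alpha(-\lambda t^\alpha)$ alone; the paper derives them via the monotone density theorem applied to \eqref{denseq}, which is legitimate because $e_\alpha$ is (completely) monotone, so you should invoke that rather than assert ``matching asymptotics.'' Also, your limit correctly produces $q(x,0,z-\tau)$ in the integrand, confirming that the $w$ appearing in \eqref{reneqstatement} is a typo for $x$, consistent with \eqref{eqthelimproc}. Your concluding identification $q=q'$ for $y>0$---the renewal equation expresses the age-$y$ price explicitly through the already-matched age-zero data---is the same as the paper's.
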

\begin{proof}
From Theorem \ref{teoeqrinnovo} and \eqref{47} we know that
\begin{align}
&C_m(x,s,z) \notag \\ = \, & (x-K)^+ \frac{E_\alpha\l -\lambda_m (z+s)^\alpha \r}{E_\alpha\l - \lambda_m s^\alpha \r}+  \int_0^z \int_0^\infty C_m(y,0,z-\tau) \, \widetilde{h}^{\sigma_m^2}(x, dy) \, \frac{e_\alpha^m(\tau+s)}{E_\alpha\l - \lambda_m s^\alpha \r} d\tau,
\label{rinnovotheorem}
\end{align}
where $e_\alpha^{m}(\cdot)$ denotes the probability density function corresponding to the cumulative distribution function $1-E_\alpha\l - \lambda_m (\cdot)^\alpha \r$.
The first term can be dealt with by directly computing the limit.
Since we have that \cite[Theorem 2.1]{meertoa}
\begin{align}
E_{\alpha}(-\lambda t^\alpha) \sim \frac{1}{\lambda} \frac{t^{-\alpha}}{\Gamma (1-\alpha)}
\label{64}
\end{align}
as $t \to \infty$, we conclude that 
\begin{align}
\frac{E_\alpha\l -\lambda_m (z+y)^\alpha \r}{E_\alpha\l - \lambda_m y^\alpha \r} \stackrel{m \to \infty}{\sim} \frac{1}{(z+y)^\alpha} \frac{\lambda_m^{-1}}{\Gamma (1-\alpha)} y^\alpha \frac{\Gamma (1-\alpha)}{\lambda_m^{-1}} \, = \, \frac{y^\alpha}{(z+y)^\alpha}.
\end{align}
For the second term, we use dominated convergence. Hence, we first compute the limits within the integral in \eqref{rinnovotheorem}, then we show that it is legitimate to exchange the integral and the limits. First note that, for $c>0$ arbitrary,
\begin{align}
E_\alpha (-(c y)^\alpha) \, = \,  \int_{y}^\infty c e_\alpha(cz) dz .
\label{denseq}
\end{align}
It follows by \eqref{64} and the monotone density theorem \cite[page 39]{bingham} that
\begin{align}
e_\alpha (cy) \stackrel{y \to \infty}{\sim} \frac{\alpha}{c^{\alpha+1}} \frac{y^{-\alpha-1}}{\Gamma (1-\alpha)}.
\end{align}
Moreover, it follows from \eqref{denseq}, for $c=\lambda_m^{1/\alpha}$, that, $\lambda_m^{1/\alpha} e_\alpha(\lambda_m^{1/\alpha}y)$ is a density for the r.v.'s $J_i$, i.e., we can work with $e_\alpha^{m}(t+\tau) = \lambda_m^{1/\alpha}e_\alpha (\lambda_m^{1/\alpha}(y+\tau))$. Putting all the pieces together we get
\begin{align}
e_\alpha^{m} (y+\tau) \stackrel{m \to \infty}{\sim} \lambda_m^{1/\alpha} \frac{\alpha}{(y+\tau)^{\alpha+1}} \frac{(\lambda_m^{1/\alpha})^{-\alpha-1}}{\Gamma (1-\alpha)} \, = \, \frac{\lambda_m^{-1}}{\Gamma (1-\alpha)} \alpha (y+\tau)^{-\alpha-1}.
\end{align}
Therefore, using again \eqref{64},
\begin{align}
\frac{e_\alpha^{\lambda_m}(y+\tau)}{E_\alpha(-\lambda_m y^\alpha)} \,  \stackrel{m \to \infty}{\sim} \, & \frac{\lambda_m^{-1}}{\Gamma (1-\alpha)} \alpha (y+\tau)^{-\alpha-1} \frac{y^{\alpha}\lambda_m}{\Gamma (1-\alpha)} \notag \\
= \, &  \frac{\alpha y^\alpha}{(y+\tau)^{\alpha +1}}.
\end{align}
Now we compute the limit for $m \to \infty$ of 
\begin{align}
\int_0^\infty C_m(y,0,z-\tau) \, \widetilde{h}^{\sigma_m^2}(x, dy).
\end{align}
Note that, for $t \in [0,T]$,
\begin{align}
&\int_0^\infty C_m(y,0,t) \, \widetilde{h}^{\sigma_m^2}(x, dy) \notag \\ = \,& \int_0^\infty \mathds{E}_{\widetilde{\mathds{P}}}^{y,0} \l S(t)-K \r^+  \, \widetilde{h}^{\sigma_m^2}(x, dy) \notag \\
= \, & \int \int_0^\infty \mathds{E}^{y,0} \l S^\star(w_1)-K \r^+ \prod_{i=1}^{N^\star(w_2)}e^{-\frac{Y_i}{2}-\frac{\sigma_m^2}{8}}  \, \widetilde{h}^{\sigma_m^2}(x, dy) \mathds{P} \l L(t) \in dw_1, L(T) \in dw_2 \r \notag \\
= \, & \int \int_0^\infty \mathds{E}^{y,0}_{\widetilde{\mathds{P}}^\star} \l S^\star (w_1)-K \r^+ \, \widetilde{h}^{\sigma_m^2}(x, dy) \, \mathds{P} \l L(t) \in dw_1, L(T) \in dw_2 \r\notag \\
= \, & \int  \mathds{E}_{\widetilde{\mathds{P}}^\star}^{x,0} \mathds{E}_{\widetilde{\mathds{P}}^\star}^{S_1,0} \l S^\star (w_1)-K \r^+ \mathds{P} \l L(t) \in dw_1, L(T) \in dw_2 \r,
\label{73}
\end{align}
where we used that $L(t)$ is non decreasing and therefore $w_1 \leq w_2$.
Using \eqref{26} and \eqref{l2estim} we get that
\begin{align}
\mathds{E}_{\widetilde{\mathds{P}}^\star}^{x,0} \left( \mathds{E}_{\widetilde{\mathds{P}}^\star}^{S_1,0} \l S^\star (w_1)-K \r^+ \right)^2 \, \leq \,& \mathds{E}_{\widetilde{\mathds{P}}^\star}^{x,0}  \mathds{E}_{\widetilde{\mathds{P}}^\star}^{S_1,0} \l \l S^\star (w_1)-K \r^+ \r^2 \notag \\ \leq  \, & e^{\lambda_m w_1 \l e^{\sigma_m^2}-1 \r}\mathds{E}_{\widetilde{\mathds{P}}^\star}^{x,0} S_1^2 \notag \\
= \, & e^{\lambda_m w_1 \l e^{\sigma_m^2}-1 \r}e^{\sigma_m^2} \notag \\
\leq \, &  e^{\sigma_1^2}e^{\lambda_m w_1 \l e^{\sigma_m^2}-1 \r}
\end{align}
and thus the sequence $\prod_{i=1}^{N^\star(w_2)}e^{-\frac{Y_i}{2}-\frac{\sigma_m^2}{8}}\mathds{E}_{\widetilde{\mathds{P}}^\star}^{S_1,0} \l S^\star (w_1)-K \r^+ $, $m \in \mathbb{N}$, is $L^2$-bounded and therefore uniformly integrable (under $\mathds{P}$). It follows that the limit $$\lim_{m \to \infty} \mathds{E}_{\widetilde{\mathds{P}}^\star}^{x,0}  \mathds{E}_{\widetilde{\mathds{P}}^\star}^{S_1,0} \l S^\star (w_1)-K \r^+$$ exists. 
Therefore, since $S_1 \to x$ in probability and $\prod_{i=1}^{N^\star(w_2)}e^{-\frac{Y_i}{2}-\frac{\sigma_m^2}{8}} \to e^{-\frac{B(w_2)}{2}-\frac{w_2}{8}}$ in distribution, it follows that
\begin{align}
\lim_{m \to \infty}\mathds{E}_{\widetilde{\mathds{P}}^\star}^{x,0} \mathds{E}_{\widetilde{\mathds{P}}^\star}^{S_1,0} \l S^\star (w_1)-K \r^+ = \mathds{E}^x_{\widetilde{\mathds{P}}_\infty^\star} \l e^{B(w_1)}-K \r^+,
\end{align}
and thus, using \eqref{73} and dominated convergence, we obtain
\begin{align}
&\int_0^\infty C_m(y,0,t) \, \widetilde{h}^{\sigma_m^2}(x, dy) \notag \\ \to \, & \int  \mathds{E}^x_{\widetilde{\mathds{P}}_\infty^\star} \l e^{B(w_1)}-K \r^+ P \l L(t) \in dw_1, L(T) \in dw_2 \r \notag \\
= \, & \int \mathds{E}^x \l e^{B(w_1)}-K \r^+ e^{-\frac{B(w_2)}{2}-\frac{w_2}{8}}P \l L(t) \in dw_1, L(T) \in dw_2 \r \notag \\
= \, & \mathds{E}_{\widetilde{\mathds{P}}_\infty}^x \l e^{B(w_1)}-K \r^+ .
\end{align}
The use of dominated convergence is justified since 
\begin{align}
\mathds{E}_{\widetilde{\mathds{P}}^\star}^x \mathds{E}_{\widetilde{\mathds{P}}^\star}^{S_1,0} \l S^\star (w)-K \r^+ \, \leq \, & \mathds{E}_{\widetilde{\mathds{P}}^\star}^x \mathds{E}_{\widetilde{\mathds{P}}^\star}^{S_1,0}  S^\star (w) \notag \\
= \, & \mathds{E}_{\widetilde{\mathds{P}}^\star}^x S_1 =x.
\label{boundprezzo}
\end{align}
Instead, in order to justify the use of dominated convergence in \eqref{rinnovotheorem} it is useful to recall that (e.g., \cite{meerpoisson})
\begin{align}
   f_J(y+\tau) d\tau \, = \, &P \l \sigma(E /\lambda_m) \in y+d\tau \r \notag \\
   = \, & d\tau\int_0^\infty e^{-w} \mu(y+\tau, w/\lambda_m) dw.
\end{align}
where $x \mapsto \mu(x,t)$ is the density of a stable subordinator. Note that $\mu(x, t) = t^{-1/\alpha} g(xt^{-1/\alpha})$ where $g(\cdot)$ is the density of a positively skewed stable r.v. with order $\alpha \in (0,1)$ which has the series representation (e.g., \cite{series})
\begin{align}
    g(x) \, = \, \frac{1}{\pi} \sum_{j=1}^\infty \frac{(-1)^{j+1}}{j! x^{1+\alpha j}} \Gamma (1+\alpha j) \sin (\pi \alpha j),
\end{align}
and thus
\begin{align}
    \mu(x,t) \, = \, \frac{1}{\pi} \sum_{j=1}^\infty \frac{(-1)^{j+1}t^j}{j! x^{1+\alpha j} } \Gamma (1+\alpha j) \sin (\pi \alpha j).
\end{align}
It follows that $\mu(x,t)$ is continuous in two variables on $(x,t) \in [y, \infty) \times [0, \infty)$, for $y>0$ fixed, and vanish as $t \to \infty$ and thus it is bounded on $[y, y+\tau] \times [0, \infty)$. Therefore also $\tau \mapsto f_J(y+\tau)$ is bounded on $[0,t]$. Hence dominated convergence is justified by these considerations together with \eqref{boundprezzo}.

\noindent Given that we proved that $q(x,y,z)$ satisfies \eqref{reneqstatement} and that $q^\prime(x,0,z) = q(x,0,z)$, we also proved that $q(x,y,z) = q^\prime(x,y,z)$ for any $(x,y,z) \in \mathbb{R}^d \times (0,z) \times [0, T]$.
\end{proof}

\begin{os}
We remark that the classical Donsker limit for the process $S^\star(t)=\prod_{i=1}^{N^\star(t)} e^{Y_i}$ can be represented as a scaling limit: the appropriate scaling here is $x \to x^{1/\sqrt{c}}$ and $t \to c^{1/\alpha}t$, as $c\to \infty$. Suppose that $N^\star_1(t)$ is a Poisson process with rate $1$ and $Y_i$ are i.i.d. standard normal, then
\begin{align}
   S^\star_c(t) \, = \, \l \prod_{i=1}^{N^\star_1(ct)} e^{Y_i }\r^{1/\sqrt{c}} \to e^{B(t)},  \text{ as } c \to \infty,
\end{align}
while
\begin{align}
    S_c(t) \, = \, \l \prod_{i=1}^{N(c^{1/\alpha}t)} e^{Y_i} \r^{1/\sqrt{c}} \to e^{B(L(t))}, \text{ as } c \to \infty,
    \label{fdd}
\end{align}
when $N(t) = N^{\star}_1(L(t))$ and $L(t)$ is the inverse of a stable subordinator. In particular the convergence in \eqref{fdd} can be obtained by observing that $N_1^\star (c^{1/\alpha}t)$ is a renewal process with the waiting times distribution
\begin{align}
   \P \l J_i^c > t \r \, = \, E_\alpha (- c t^{\alpha}) \end{align}
so that 
\begin{align}
    \sum_{i=1}^{[ct]} J_i^c \to \sigma(t)
\end{align}
and 
\begin{align}
    \frac{1}{\sqrt{c}}\sum_{i=1}^{[ct]} Y_i \to B(t)
\end{align}
where $Y_i$ are i.i.d. standard normal, where all the convergences are meant in distribution.
It follows from \cite[Theorem 3.4]{meercoupled} that
\begin{align}
    S_c(t) \to e^{B(L(t))}
\end{align}
in distribution, as $c \to \infty$. 
\end{os}

\begin{os}
\noindent We remark that the distribution appearing in the renewal equations \eqref{eqthelimproc}, i.e., 
\begin{align}
    \mathrm{h}_y(d\tau) := \frac{\nu(y+d\tau)}{\bar{\nu}(y)}
    \label{distrreslif}
\end{align}
is the distribution of the remaining lifetime $\mathcal{J}^\infty (t)$ conditional to $\gamma^\infty (t) = y$. This can be seen starting from the joint distribution of the undershooting and overshooting process of a given subordinator, i.e., \cite[page 76]{bertoinb}
\begin{align}
    \P \l \sigma \l L(t)- \r  \in ds, \sigma \l L(t) \r \in dx \r \, = \, u(s) \nu(x-s) \, ds \, dx 
    \label{jointou}
\end{align}
where $u(\cdot)$ represents the potential density and $\nu(\cdot)$ the density of the L\'evy measure, which we are both assuming to exist. Since $\mathcal{J}^\infty(t) = \sigma \l L(t) \r-t$ the distribution $\mathrm{h}_y(d\tau)$ can be computed as follows, using \eqref{jointou},
\begin{align}
 \P \l \mathcal{J}^\infty (t) > w \mid \gamma^\infty (t) = y \r \, = \, &   \P \l \sigma \l L(t) \r -t > w \mid t-\sigma \l L(t)- \r =y  \r \notag \\
 = \, &\int_{w+t}^\infty \frac{u(t-y) \nu(x-t+y)}{u(t-y)\bar{\nu}(y)}dx \notag \\
 = \, & \frac{\bar{\nu}(w+y)}{\bar{\nu}(y)} \notag \\
 = \, & \mathrm{h}_y(w, \infty).
\end{align}
\end{os}

\section{Fractional-type Black-Scholes equation}
\label{sec5}
\noindent In this section we derive the final value problem satisfied by the limiting plain vanilla European option price at time $t \in [0,T]$, i.e., by the function
\begin{align}
(0, \infty) \times [0, T] \ni (x,t)  \mapsto & g_y(x,t) \notag \\ : = \,& \mathds{E}_{\widetilde{\P}_\infty} \left[ \l S_0e^{B(L(T))}-K \r^+ \mid S_0e^{B(L(t))}=x, \gamma^\infty (t) = y \right].
\end{align}
In view of the discussion of the previous section, we have that
\begin{align}
   g_y(x,t) \, = \, q^\prime(x,y,T-t) 
\end{align}
where the function $q^\prime$ is introduced in Theorem \ref{teoremagenerale}.
This means that we look for an equivalent of the Black and Scholes equation in our semi-Markov setting.
It is noteworthy that the equation depends on $y$, i.e., the current time passed since the last variation in the price $e^{B(L(t))}$. This is because the Markov property is lost and the age $\gamma^\infty(t)$ is relevant in order to compute the probability of events in the future of $\mathcal{G}_t$. It turns out that the equation is expressed by means of the fractional-type operator
\begin{align}
    \mathcal{D}^T u(t) \,: = \,  \frac{d}{dt} \int_t^T \l u(s) - u(T)  \r \, \bar{\nu}(s-t) ds, \qquad t \in [0,T],
    \label{terminalfrac}
\end{align}
for suitable functions $u:[0,T] \mapsto \mathbb{R}$, which depends on the terminal value $u(T)$. We recall that the typical form of fractional-type operators is \cite{kochu, meertoa, pierre}
\begin{align}
    \mathcal{D}_t u(t) \, = \, \frac{d}{dt}\int_0^t (u(s)-u(0)) \bar{\nu}(t-s) ds, \qquad t \in [0,T],
    \label{classicfrac}
\end{align}
and they depends on the initial value $u(0)$. If
\begin{align}
    \nu(d\tau) \, = \, \frac{\alpha \tau^{-\alpha-1}}{\Gamma (1-\alpha)}d\tau, \qquad \alpha \in (0,1),
\end{align}
then the operator \eqref{classicfrac} reduces to
\begin{align}
     \mathcal{D}_t u(t) \, = \, \frac{1}{\Gamma (1-\alpha)} \frac{d}{dt}\int_0^t (u(s)-u(0)) (t-s)^{-\alpha} ds,
\end{align}
which is the so-called regularized Riemann-Liouville $\alpha$-fractional derivative (e.g., \cite{meerbook}). Instead, in this case, the final-value fractional type operator has the form
\begin{align}
    \mathcal{D}^Tu(t) \, = \, \frac{1}{\Gamma (1-\alpha)} \frac{d}{dt} \int_t^T \l u(s) - u(T) \r  \, (s-t)^{-\alpha}ds.
\end{align}
In order to get the equation for $y>0$, we need a further assumption on the time-change, i.e., on the Bernstein function $f$. We shall assume that $f(\phi)$ is a special Bernstein function, i.e., the function $f^\star(\phi):=\phi/f(\phi)$ is still a Bernstein function. Since $f(\phi)$ has no drift and $\nu(0, \infty)=\infty$, it follows from \cite[Remark 10.2]{librobern} that
\begin{align}
    f^\star(\phi) \, = \, a^\star + \int_0^\infty \l 1-e^{-\phi s} \r \nu (ds)
\end{align}
where $a^\star = \l \int_0^\infty s \nu(ds) \r^{-1}$.
In this case we have that the tail of the corresponding L\'evy measure,  $\bar{\nu}^\star (t):= a^\star+\nu(t, \infty)$ and the tail $\bar{\nu}(t) = \nu(t, \infty)$ form a pair of Sonine kernels, i.e. (see \cite[Corollary 10.8 and Theorem 10.9]{librobern})
\begin{align}
\int_0^t \bar{\nu} (s) \bar\nu^\star(t-s) ds\, = \, 1.
\label{sonine}
\end{align}
Under this assumption, the fractional-type operator $\mathcal{D}_t$ has an inverse in the following sense. Let 
\begin{align}
&\mathcal{I}_tu(t) \, = \, \int_0^t u(s) \bar{\nu} (t-s) ds \\
    &\mathcal{I}_t^\star u(t) \, = \, \int_0^t u(s) \bar{\nu}^\star (t-s) ds
\end{align}
and note that
\begin{align}
    \mathcal{D}_tu(t) = \partial_t \mathcal{I}_t \l u(t) - u(0) \r.
\end{align}
Then, it is true that, for suitable functions $u$,
\begin{align}
    \mathcal{I}_t^\star \partial_t \mathcal{I}_t u(t) \, = \, u(t),
    \label{inversion}
\end{align}
as well as
\begin{align}
   \partial_t \mathcal{I}_t \mathcal{I}_t^\star u (t) = u(t).
   \label{inversion2}
\end{align}
For a continuous function $u$ such that the operators above are well defined, one can check the result by observing that $\mathcal{I}_t \mathcal{I}_t^\star u(t) = \bar{\nu} \star \l \bar{\nu}^\star \star u \r$, where the symbol $ u \star \bar{\nu}$ denote the convolution between $u$ and $\bar{\nu}$. Since a continuous function $u(t)$ is in $L^1_{\text{loc}} \l \mathbb{R}^+ \r$, one can use \cite[Proposition 1.3.1]{abhn} to see that $\mathcal{I}_t \mathcal{I}_t^\star u(t) = u \star \l \bar{\nu}^\star \star \bar{\nu} \r$ and since $\bar{\nu}^\star \star \bar{\nu} = 1$ (by \eqref{sonine}) we have that
\begin{align}
    \mathcal{I}_t \mathcal{I}_t^\star u(t) = \int_0^tu(s)ds.
    \label{here}
\end{align}
Differentiating in \eqref{here} then yields \eqref{inversion2}. With a similar argument it is possible to show \eqref{inversion}. See \cite{giacomo, meertoa} for a thorough discussion on these properties.

\noindent We can now state a theorem on the governing equation for the function $g_y (x,t)$.
\begin{thm}
\label{limitingequation}
The function $g_0(x,t)$ satisfies
\begin{align}
\begin{cases}
    \mathcal{D}^T g_0 (x,t) \, = \, -x^2\partial_x^2 g_0(x,t), \qquad &(x,t) \in (0, \infty) \times [0,T), \\
    g_0(x,t) = (x-K)^+, &(x,t) \in (0, \infty) \times T.
    \end{cases}
    \label{eqg0thm}
    \end{align}
    If the Bernstein function $f(\phi)$ is special, the function $g_y(x,t)$, $y>0$, satisfies
    \begin{align}
    \begin{cases}
        \mathcal{D}^T g_y(x,t) \, = \, -\int_0^{T-t} x^2 \partial_x^2 g_0(x,t-\tau) \mathrm{h}_y(d\tau), \qquad &(x,t) \in (0, \infty) \times [0,T), \\
        g_y(x,t) = (x-K)^+, & (x,t) \in (0, \infty) \times T.
        \end{cases}
        \label{eqgythm}
    \end{align}
    where $\mathrm{h}_y(\cdot)$ is the probability measure on $(0, \infty)$ defined in \eqref{distrreslif}.
\end{thm}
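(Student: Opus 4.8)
The plan is to pass to the time-to-maturity variable $z=T-t$, trade the terminal operator $\mathcal{D}^T$ for the ordinary initial-value operator $\mathcal{D}_z$ of \eqref{classicfrac}, and then read off everything from the generalized fractional Cauchy problem solved by the subordinated Black--Scholes price. Writing $q^\prime(x,y,z)=g_y(x,T-z)$, the first step is a pure time-reversal identity: substituting $s=T-\sigma$ in the definition \eqref{terminalfrac} of $\mathcal{D}^T$ one checks, for every $y\ge 0$,
\begin{align}
\mathcal{D}^T g_y(x,t) \, = \, - \left. \mathcal{D}_z\, q^\prime(x,y,z) \right|_{z=T-t},
\end{align}
where $\mathcal{D}_z$ acts in $z$ with kernel $\bar\nu$. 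Since \eqref{eqthelimproc} gives $q^\prime(x,y,0)=(x-K)^+$, the terminal data $g_y(x,T)=(x-K)^+$ come for free, and it remains to prove the two $z$-equations $\mathcal{D}_z q^\prime(x,0,z)=\mathcal{A}_x q^\prime(x,0,z)$ and $\mathcal{D}_z q^\prime(x,y,z)=\int_0^z \mathcal{A}_x q^\prime(x,0,z-\tau)\,\mathrm{h}_y(d\tau)$, where $\mathcal{A}_x=\tfrac12 x^2\partial_x^2$ is the generator of the limiting geometric Brownian motion $e^{B(w)-w/2}$, i.e.\ the Black--Scholes operator appearing on the right-hand sides of \eqref{eqg0thm}--\eqref{eqgythm}.

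For the case $y=0$ I would use \eqref{67} together with the subordination $S(t)=S^\star(L(t))$ to write $p(x,z):=q^\prime(x,0,z)=\int_0^\infty u(x,s)\,\mathds{P}(L(z)\in ds)$ as the inverse-subordinator time change of the Black--Scholes price $u(x,s)=\mathds{E}^x(S_0e^{B(s)-s/2}-K)^+$, which solves $\partial_s u=\mathcal{A}_x u$ with $u(x,0)=(x-K)^+$. By the theory of such time changes (the generalized fractional Cauchy problem, \cite{meertoa,meerstra}), $p$ then solves $\mathcal{D}_z p=\mathcal{A}_x p$ with $p(x,0)=(x-K)^+$; combined with the reversal identity this is exactly \eqref{eqg0thm}. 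Note that for $t<T$ the time change smooths the payoff, so $x^2\partial_x^2 g_0$ is a genuine function and the spatial equation is classical, while the singular data $(x-K)^+$ only enter at the terminal slice.

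For $y>0$ I would start from the renewal equation \eqref{eqthelimproc}, written as $q^\prime(x,y,z)=(x-K)^+\phi_y(z)+(p(x,\cdot)\star\psi_y)(z)$ with $\phi_y(z)=\bar\nu(y+z)/\bar\nu(y)$ and $\psi_y(\tau)=\nu(y+\tau)/\bar\nu(y)$ the density of $\mathrm{h}_y$ in \eqref{distrreslif}, so that $\psi_y=-\phi_y^\prime$ and $\phi_y(0)=1$. Applying $\mathcal{D}_z$ and using the convolution rule $\partial_z(f\star g)=f(0)g+(\partial_z f)\star g$ together with $\partial_z(p\star\bar\nu)=\mathcal{D}_z p+(x-K)^+\bar\nu=\mathcal{A}_x p+(x-K)^+\bar\nu$, a direct computation gives
\begin{align}
\mathcal{D}_z\big((x-K)^+\phi_y\big) \, = \, -(x-K)^+(\bar\nu\star\psi_y), \qquad
\mathcal{D}_z(p\star\psi_y) \, = \, (\mathcal{A}_x p)\star\psi_y+(x-K)^+(\bar\nu\star\psi_y),
\end{align}
whose boundary contributions cancel, leaving $\mathcal{D}_z q^\prime(x,y,z)=\int_0^z \mathcal{A}_x p(x,z-\tau)\psi_y(\tau)\,d\tau$. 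The special-Bernstein hypothesis enters here: the Sonine-pair identity \eqref{sonine} and the inversions \eqref{inversion}--\eqref{inversion2} are what legitimize moving $\mathcal{D}_z$ across the renewal convolution and differentiating under the integral. Equivalently, a Laplace transform in $z$ using $\widehat{\bar\nu}(\phi)=f(\phi)/\phi$ and the elementary relation $\widehat{\psi_y}(\phi)=1-\phi\,\widehat{\phi_y}(\phi)$ collapses the whole expression to $\widehat{\mathcal{D}_z q^\prime}=\widehat{\psi_y}\,\widehat{\mathcal{A}_x p}$, which inverts to the same convolution; reversing time then yields \eqref{eqgythm}.

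The hard part is the singular term $(x-K)^+\phi_y(z)$: because $\partial_x^2(x-K)^+$ is a Dirac mass at $x=K$, the relation cannot be read as a pointwise spatial PDE with a source, and this is precisely why the nonlocal time operator $\mathcal{D}^T$ is indispensable rather than cosmetic. Making the operator manipulations rigorous — the interchange of $\mathcal{D}_z$ with the $\tau$-integral, the differentiation under the integral sign, and the validity of $\mathcal{D}_z p=\mathcal{A}_x p$ in the strong sense — requires controlling $\partial_z p$ and the residual-lifetime density near a renewal epoch. In the stable case one can lean on the explicit bounds for the stable density already used in the proof of Theorem \ref{theoremML}, while in general the Sonine structure supplies the needed inversion; establishing the requisite regularity of $p$ (smooth in $x$, differentiable in $z$) up to the boundary is the main technical hurdle.
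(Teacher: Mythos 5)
Your proposal is correct in outline and shares the paper's skeleton --- the time-reversal identity converting $\mathcal{D}^T$ in $t$ into $-\mathcal{D}_z$ in $z=T-t$ is exactly the paper's computation \eqref{dainizafin}, and the terminal data from $q^\prime(x,y,0)=(x-K)^+$ are obtained the same way --- but the two key steps are executed differently. For $y=0$ the paper does not invoke general fractional-Cauchy-problem theory: it proves the integrated identity \eqref{eqint} by hand, taking Laplace transforms in $z$, using $\int_0^\infty e^{-\phi t}\bar{\nu}(t)\,dt=f(\phi)/\phi$ and $\widetilde{q}^\prime(x,0,\phi)=\phi^{-1}f(\phi)\widetilde{C}_{\text{BS}}(x,0,f(\phi))$ to reduce the claim to the Black--Scholes equation in Laplace space \eqref{laplspacebs}, with everything justified through the explicit gamma bound \eqref{boundderq} and $\mathds{E}\,L(z)^{-1/2}<\infty$. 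This matters because the payoff is unbounded, so off-the-shelf subordination theorems (typically stated for bounded data or $C_0$-semigroups) do not apply verbatim; the regularity you defer as ``the main technical hurdle'' is precisely what the paper's Laplace argument supplies. For $y>0$ your route genuinely differs: you apply $\mathcal{D}_z$ directly to the renewal decomposition $q^\prime=(x-K)^+\phi_y+p\star\psi_y$ via convolution calculus, and your two displayed identities check out ($\phi_y(0)=1$ kills one boundary term, $(p\star\bar{\nu})(0)=0$ the other, and the $(x-K)^+(\bar{\nu}\star\psi_y)$ contributions cancel). The paper instead applies the Sonine inverse $\mathcal{I}^\star_z$ to \eqref{eqfracproof} to obtain the integral form \eqref{dasost}, substitutes into \eqref{eqthelimproc}, uses that $\mathrm{h}_y$ is a probability measure to recombine the $(x-K)^+$ terms, and finishes with $\partial_z\mathcal{I}_z$ via \eqref{inversion2}. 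Each buys something: the paper's route offloads all differentiability issues onto the pre-established inversions \eqref{inversion}--\eqref{inversion2}, while yours needs differentiation under the convolution but --- contrary to your own remark --- never actually consumes the Sonine identity \eqref{sonine}: once $\mathcal{D}_z p=\mathcal{A}_x p$ and continuity of $\mathcal{A}_x p$ are in hand, your computation would go through assuming only that $\nu$ has a density (needed anyway for \eqref{distrreslif}), which, if made rigorous, is a mildly stronger conclusion than the paper's.

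Two corrections to tighten. First, you identify the spatial operator as $\mathcal{A}_x=\tfrac12 x^2\partial_x^2$, which is indeed the generator of $e^{B(w)-w/2}$; but the theorem and the paper's \eqref{bseq} carry $x^2\partial_x^2$ without the $\tfrac12$ (the paper's displayed formula for $x^2\partial_x^2 C_{\text{BS}}$ in fact computes $\partial_w C_{\text{BS}}$, so the factor-of-two slip originates in the paper), and as written your $y=0$ equation does not literally match \eqref{eqg0thm}; fix one convention and state it. Second, your attribution of the special-Bernstein hypothesis (``the Sonine-pair identity \ldots\ is what legitimizes moving $\mathcal{D}_z$ across the renewal convolution'') misdescribes your own argument --- your Laplace alternative likewise needs only $\int_0^\infty e^{-\phi t}\bar{\nu}(t)\,dt=f(\phi)/\phi$ --- so either drop the hypothesis and justify the convolution differentiation directly, or switch to the paper's $\mathcal{I}^\star$ route, where that hypothesis is genuinely used.
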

\begin{proof}
Since $g_y(x,t) = q^\prime (x,y,T-t)$ we have that, for $y=0$ and $z=T-t$, using \eqref{67},
\begin{align}
    g_0(x,t) \, = \, q^\prime (x,0,z) \, = \,& \mathds{E}^x_{\widetilde{\mathds{P}}_\infty} \l S_0e^{B(L(t))} -K\r^+ \notag \\ 
    = \, & \mathds{E}^x \l S_0e^{B(L(z))-L(z)/2}-K \r^+ \notag \\
    = \, & \int_0^\infty \mathds{E}^x \l S_0 e^{B(s)-s/2}-K\r^+ \mathds{P}^x \l L(z) \in ds \r \notag \\
    = \, & \int_0^\infty C_{\text{BS}}(x,s) \mathds{P}^x \l L(z) \in ds \r
    \label{tczero}
\end{align}
where $C_{\text{BS}}(x,0,s):=\mathds{E}^x \l S_0 e^{B(s)-s/2}-K\r^+$ is the Black and Scholes price at time $t=0$ of a plain vanilla European call option which expires at time $s>0$.
Under our hypotheses (zero interest rate and $\sigma_{BS} = \lim _{m \to \infty} \lambda_m \sigma_m = 1$), the Black and Scholes price satisfies
\begin{align}
    \partial_s C_{\text{BS}}(x,s) \, = \, x^2\partial_x^2 C_{\text{BS}}(x,s), \qquad C_{\text{BS}}(x,0) \, = \, (x-K)^+.
    \label{bseq}
\end{align}
Note that we are considering the {\em final value} problem which explains the positive sign in \eqref{bseq}.
Using this equation, it is possible to see that $q^\prime(x,0,z)$ satisfies
\begin{align}
    \int_0^z \l q^\prime (x,0,w)-(x-K)^+ \r \bar{\nu}(z-w) \, dw \, = \, \int_0^z x^2 \partial_x^2 q^\prime(x,0,w) \, dw .
    \label{eqint}
\end{align}
Indeed, by using the explicit representation of $C_{\text{BS}}(x,s,0)$, we have that
\begin{align}
   x^2\partial_x^2 C_{\text{BS}}(x,w) \, = \, \frac{\sqrt{Kx}}{2}  \frac{e^{-\frac{\log^2\frac{x}{k}}{2w}-\frac{w}{8}}}{\sqrt{2\pi w}}
\end{align}
so that
\begin{align}
   \left| x^2\partial_x^2 C_{\text{BS}}(x,w) \right| \, \leq \, \frac{\sqrt{Kx}}{\sqrt{2\pi w}},
   \label{boundderq}
\end{align}
and, by \cite[Lemma 2.2]{ascione2},
\begin{align}
    \mathds{E} \frac{1}{\sqrt{L_z}} < \infty.
\end{align}
It follows that
\begin{align}
   x^2 \partial_x^2 q^\prime (x,0,z) \, = \, \mathds{E} x^2\partial_x^2 C_{\text{BS}}(x,0,L(z)).
    \end{align}
Now let $l(s, t)$ be the density of $L(t)$, which has the representation \cite[Theorem 3.1]{meertri}
\begin{align}
    l(s, t) \, = \, \int_0^t \mu(w, s) \bar{\nu}(t-w) dw
    \label{densinv}
\end{align}
where $w \mapsto \mu(w, s)$ is a density of the corresponding subordinator. Note that, by \eqref{boundderq} and \eqref{densinv}, using Fubini and \cite[Proposition 1.3.2]{abhn},
\begin{align}
  &      \int_0^w \int_0^\infty \left| x^2\partial_x^2 C_{\text{BS}}(x,s) \right| \P \l L(t) \in ds \r dt \,notag \\ \leq \,&\frac{\sqrt{Kx}}{\sqrt{2\pi }} \int_0^w \int_0^\infty s^{-1/2} \int_0^t \mu(z, s) \bar{\nu}(t-z) \, dz \, ds \, dt \notag \\
        = \, & \frac{\sqrt{Kx}}{\sqrt{2\pi }} \int_0^\infty s^{-1/2} \int_0^w \int_0^t \mu(z,s) \bar{\nu}(t-z) \, dz \,  dt \, ds \notag \\
        \leq  \, &\frac{\sqrt{Kx}}{\sqrt{2\pi }} \int_0^\infty s^{-1/2} \P \l \sigma (s) \leq w \r \int_0^w \bar{\nu}(z) dz \notag \\
        = \, &\int_0^w \bar{\nu}(z) dz \frac{\sqrt{Kx}}{\sqrt{2\pi }} \int_0^\infty s^{-1/2} \P \l L(w) > s \r ds < \infty
\end{align}
  since $\E L(w) < \infty$ and $ z \mapsto \bar{\nu}(z)$ is integrable near zero.
It follows that both sides of \eqref{eqint} are continuous functions of $z$ and we can show that they coincide (for any $z>0$) by showing that their Laplace transform coincide. We have that
\begin{align}
   & \int_0^\infty e^{-\phi z } \int_0^z \l q^\prime (x,0,w)-(x-K)^+ \r \bar{\nu}(z-w) \, dw \, dz \notag\\ = \,& \frac{f(\phi)}{\phi} \l \widetilde{q}^\prime (x,0,\phi) - \phi^{-1} (x-K)^+ \r 
    \label{lapll}
\end{align}
while
\begin{align}
    \int_0^\infty e^{-\phi z }  \int_0^z x^2 \partial_x^2 q^\prime(x,0,w) \, dw \, dz \, = \, \frac{1}{\phi} x^2 \partial_x^2 \widetilde{q}^\prime (x,0,\phi).
    \label{laplr}
\end{align}
We used in \eqref{lapll} the convolution theorem for Laplace transform and the fact that
\begin{align}
    \int_0^\infty e^{-\phi t}\bar{\nu}(t) \, dt \, = \, \frac{f(\phi)}{\phi}
\end{align}
which can be obtained in \eqref{bernstein} (for $b=0$).
By \eqref{tczero} and using the fact that that $L(t)$ has a density $s \mapsto l(s,t)$ so that
\begin{align}
    \int_0^\infty e^{-\phi t} l(s,t) \, dt \, = \, \frac{f(\phi)}{\phi} e^{-sf(\phi)},
\end{align}
it is easy to see that
\begin{align}
    \widetilde{q}^\prime (x,0,\phi) \, = \, \phi^{-1}f(\phi) \widetilde{C}_{BS}(x,0,f(\phi)) 
\end{align}
and therefore \eqref{lapll} and \eqref{laplr} coincide if
\begin{align}
    f(\phi) \widetilde{C}_{\text{BS}} (x,0,f(\phi)) -(x-K)^+ = x^2 \partial_x^2 \widetilde{C}_{\text{BS}} (x,0,f(\phi)).
    \label{laplspacebs}
\end{align}
Equality \eqref{laplspacebs} holds indeed true since it is equation \eqref{bseq} in the Laplace space with parameter $f(\phi), \, \phi>0$.
It follows that \eqref{eqint} is true for any $z$. Further the function $w \mapsto x^2 \partial_x^2 q^\prime(x,0,w)$ is continuous for any $x$, as can be verified by using \eqref{tczero} and the bound in \eqref{boundderq} as follows. Recall that
\begin{align}
   x^2 \partial_x^2 q^\prime (x,0,z) \, = \, \mathds{E} x^2\partial_x^2 C_{\text{BS}}(x,0,L(z))
    \label{expbs}
\end{align}
and note that for $z>0$ and $h$ with $|h|<z/2$, one has
\begin{align}
    \left| x^2\partial_x^2 C_{\text{BS}}(x,0,L(z+h)) \right| \, \leq \, \frac{1}{\sqrt{2\pi L(z/2)}}
\end{align}
since the paths $t \mapsto L(t, \omega)$ are almost surely continuous and non decreasing as inverse of a strictly increasing subordinator.
Therefore continuity follows by dominated convergence in \eqref{expbs}.
Hence we can differentiate in \eqref{eqint} to get that
\begin{align}
    \frac{d}{dz} \int_0^z \l q^\prime (x,0,w) - q^\prime (x,0,0) \r \bar{\nu}(z-w) \, dw \, = \, x^2\partial_x^2 q^\prime (x,0,z), \quad q(x,0,0) = (x-K)^+.
    \label{eqfracproof}
\end{align}
Note now that
\begin{align}
   &   \frac{d}{dz} \int_0^z \l q^\prime (x,0,w) - q^\prime (x,0,0) \r \bar{\nu}(z-w) \, dw \notag \\ 
      = \, &-\frac{d}{dt} \int_0^{T-t} \l q^\prime(x,0,w) - q^\prime (x,0,0) \r \bar{\nu}(T-t-w) \, dw \notag \\
      = \, & -\frac{d}{dt} \int_t^T \l q^\prime(x,0,T-w) - q^\prime (x,0,0) \r \bar{\nu}(w-t) \, dw \notag  \\
      = \, & -\frac{d}{dt} \int_t^T \l g_0(x,w) - g_0(x,T) \r \bar{\nu}(w-t) \, dw \notag \\
      = \, &- \mathcal{D}^T g_0(x,t)
      \label{dainizafin}
\end{align}
from which \eqref{eqg0thm} follows.

\noindent In order to prove \eqref{eqgythm} we resort to the representations \eqref{eqthelimproc} and \eqref{eqfracproof}. In particular, by applying the operator $\mathcal{I}_z^\star$ to both sides of \eqref{eqfracproof} we obtain
\begin{align}
    q^\prime (x,0,z)  \, = \,  (x-K)^++ \mathcal{I}_z^\star x^2 \partial_x^2 q^\prime (x,0,z)
    \label{dasost}
\end{align}
where we used \eqref{inversion} (alternatively, in order to justify \eqref{dasost} one can apply the general result \cite[Lemma 3.1]{giacomo}). By substituting \eqref{dasost} into \eqref{eqthelimproc} we obtain
\begin{align}
    q^\prime (x,y,z) \, = \, &(x-K)^+ \frac{\bar{\nu}(y+z)}{\bar{\nu}(y)} + \int_0^z \l  (x-K)^++ \mathcal{I}_{z-\tau}^\star x^2 \partial_x^2 q^\prime (x,0,z-\tau) \r \frac{\nu(y+d\tau)}{\bar{\nu}(y)} \notag \\
    = \, & (x-K)^+ + \int_0^z \int_0^{z-\tau} x^2 \partial_x^2 q^\prime (x,y,z-\tau-s) \bar{\nu}^\star (s) ds \, \frac{\nu(y+d\tau)}{\bar{\nu}(y)} \notag \\
    = \, & (x-K)^+ + \int_0^z \int_0^{z-s} x^2\partial_x^2 q^\prime(x,0,z-s-\tau) \frac{\nu(y+d\tau)}{\bar{\nu}(y)} \, \bar{\nu}^\star (s) \, ds \notag \\
    = \, & (x-K)^+ + \mathcal{I}_z^\star \int_0^{z} x^2\partial_x^2 q^\prime(x,0,z-\tau) \frac{\nu(y+d\tau)}{\bar{\nu}(y)}.
    \label{conti}
\end{align}
Rearranging \eqref{conti} and applying \eqref{inversion2} we find
\begin{align}
\mathcal{D}_z q^\prime (x,y,z) \, = \, \int_0^z x^2 \partial_x^2 q^\prime (x,y,z-\tau) \frac{\nu(y+d\tau)}{\bar{\nu}(y)}.
\end{align}
Repeating the computation in \eqref{dainizafin} for $g_y(x,t)$ then yields
\begin{align}
      \frac{d}{dz} \int_0^z \l q^\prime (x,y,w) - q^\prime (x,y,0) \r \bar{\nu}(z-w) \, dw   \, = \, - \mathcal{D}^T g_y(x,t).
\end{align}

\end{proof}



\section{Conclusions and outlook}

\noindent As mentioned in the introduction, the answer to the motivating problem depends on the kind of option, on the specific model for the price fluctuations of the underlying asset and on the option pricing method. 

\noindent While leaving the comfortable world of complete markets \cite{jacod}, one has the freedom of choosing from a plethora of option pricing methods (see the references in \cite{jacod} for a short survey). Therefore, a first natural extension of the present work would imply the analysis of different option pricing methods.

\noindent It is well-known (see for instance \cite{ponta}), that the simple semi-Markov process of equation \eqref{price} has unrealistic features as a model of tick-by-tick prices in a regulated equity market. Another line of research would imply using more realistic processes taking into account the presence of autocorrelation in returns and durations as well as the dependent character of the counting process and the price process.

\noindent A more staightforward generalization of this work would obviously imply the analysis of other options written on the continuous-time multiplicative semi-Markov process of equation \eqref{price}.

\noindent However, there is at least an interesting open problem that deserves further analysis without considering generalizations of the model. In Theorem \ref{theoremML}, we proved that, under suitable scaling, the prices of options written on the continuous-time semi-Markov process converge to the option price when the underlying is a time-changed geometric Brownian motion, where the time change is the inverse stable subordinator. As mentioned in the introduction, one can conjecture that this results holds in more general cases and we are currently working on this problem.

\section*{Acknowledgements}

\noindent Enrico Scalas has been partially supported by the {\em Dr Perry James (Jim) Browne Research Centre}. The research work by Bruno Toaldo was done in the framework of MIUR PRIN 2017 project
``Stochastic Models for Complex Systems'', no. 2017JFFHSH.

\noindent Furthermore, we want to thank the referees for their careful reading of the paper and for having pointed out useful comments that considerably improved a previous draft.

\vspace{1cm}

\begin{thebibliography}{99}

\bibitem{applebaum}
D. Applebaum.
\newblock L\'evy Processes and Stochastic Calculus. Second Edition.
\newblock \emph{Cambridge University Press}, New York, 2009.

\bibitem{abhn}
W. Arendt, C.J.K. Batty, M. Hieber and F. Neubrander.
\newblock Vector valued Laplace transform and Cauchy problem. Second Edition.
\newblock \emph{Birkh\"{a}user}, Berlin, 2010.

\bibitem{giacomo}
G. Ascione.
\newblock Abstract Cauchy problems for the generalized fractional calculus.
\newblock \emph{Nonlinear Analysis}, 209: 112339, 2021.

\bibitem{ascione2}
G. Ascione, Y. Mishura and E. Pirozzi.
\newblock Convergence results for the time-changed fractional Ornstein-Uhlenbeck processes.
\newblock \emph{arxiv:2011.02733v1}.

\bibitem{meercoupled}
P. Becker Kern, M.M. Meerschaert and H. P. Scheffler. { Limit theorems for coupled continuous time random walks}, \textit{Ann. Probab.} (32) 730-756 (2004)

\bibitem{bertoins}
J. Bertoin.
\newblock {Subordinators: examples and applications}.
\newblock \emph{Lectures on probability theory and statistics (Saint-Flour, 1997)}, 1 -- 91. \emph{Lectures Notes in Math.}, 1717, Springer, Berlin, 1999.

\bibitem{bertoinb}
J. Bertoin.
\newblock {L\'evy processes}.
\newblock \emph{Cambridge University Press}, Cambridge, 1996.

\bibitem{billingsley} P. Billingsley, Probability and Measure, \textit{Wiley}, New York, 1986.


\bibitem{bingham}
Bingham N.H., Goldie C.M., and Teugels J.F. (1989).
\newblock {\em Regular variation},
\newblock Cambridge university press.

\bibitem{cartea} A. Cartea, Derivatives pricing with marked point processes using tick-by-tick data, Quantitative Finance, 13:1, 111-123, 2013.

\bibitem{doob} J.L. Doob. Renewal theory from the point of view of the theory of probability, Transactions of the American Mathematical Society, 63(3), 422--438, 1948.

\bibitem{gihman}
I.I. Gihman and A.V. Skorohod.
\newblock The theory of stochastic processes II.
\emph{Springer-Verlag}, 1975.

\bibitem{jacod} J. Jacod and P. Protter, Option prices in incomplete markets, ESAIM: Proceedings and Surveys, 56, 72--87, 2017.

\bibitem{jacquier} A. Jacquier and L. Torricelli, Anomalous Diffusions in Option Prices: Connecting Trade Duration and the Volatility Term Structure, SIAM Journal on Financial Mathematics, 11(4), 1137--1167, 2020.

\bibitem{kallenberg} O. Kallenberg. Foundations of Modern Probability, \textit{Springer}, 1997.

\bibitem{kochu}
A.N. Kochubei.
\newblock{General fractional calculus, evolution equations and renewal processes}.
\newblock{\emph{Integral Equations and Operator Theory}}, 71: 583 -- 600, 2011.

\bibitem{mainardimittag} 
F. Mainardi. On some properties of the Mittag-Leffler function $E_\alpha(-t^\alpha)$, completely monotone for $t>0$ with $0 < \alpha <1$. {\it Discrete and Continuous Dynamical System Series B}: 9(7): 2267 --  2278, 2014.

\bibitem{meerpoisson}
M.M. Meerschaert, E. Nane and P. Vellaisamy.
\newblock {The fractional Poisson process
and the inverse stable subordinator}.
\newblock \emph{Electronic Journal of Probability}, 16(59): 1600--1620, 2011.



\bibitem{meertri}
M.M. Meerschaert and H.P. Scheffler.
\newblock {Triangular array limits for continuous time random walks}.
\newblock \emph{Stochastic Processes and their Applications}, 118(9): 1606 -- 1633, 2008.

\bibitem{meerbook}
M.M. Meerschaert and A. Sikorskii.
\newblock {Stochastic Models for Fractional Calculus}.
\newblock \emph{De Gruyter Studies in Mathematics 43}, 2012.

\bibitem{meerstra}
M.M. Meerschaert and P. Straka.
\newblock {Semi-Markov approach to continuous time random walk limit processes}.
\newblock \emph{The Annals of Probability}, 42(4): 1699 -- 1723, 2014.

\bibitem{meertoa}
M.M. Meerschaert and B. Toaldo.
\newblock {Relaxation patterns and semi-Markov dynamics}.
\newblock \emph{Stochastic Processes and their Applications,} 129(8): 2850 -- 2879, 2019.

\bibitem{merton1976} R.C. Merton, Option pricing when underlying stock returns are discontinuous, Journal of Financial Economics 3, 125--144, 1976.

%

\bibitem{montero} M. Montero.
\newblock{Renewal equations for option pricing}.
\newblock{The European Journal of Physics B, 65, 295--306, 2008.}

\bibitem{ohara} M. O'Hara
\newblock{Market Microstructure Theory}
\newblock Wiley, New York, 1997.

\bibitem{pierre}
P. Patie and A. Srapionyan.
\newblock {Self-similar Cauchy problems and generalized Mittag-Leffler functions}.
\newblock \emph{Fract. Calc. Appl. Anal.} 24(2), 447 -- 482, 2021.

\bibitem{series}
K.A. Penson and K. G\'orska.
\newblock {Exact and explicit probability densities for one-sided L\'evy stable distributions}.
\newblock \emph{Physical Review Letters,} 105, 210604, 2010.

\bibitem{politi} M. Politi, T. Kaizoji, and E. Scalas, Full characterization of the fractional Poisson process. Europhysics Letters, 96(2), 20004. 2011.

\bibitem{ponta} L. Ponta, M. Trinh, M. Raberto, E. Scalas, and S. Cincotti, Silvano, Modeling non-stationarities in high-frequency financial time series, Physica A, 521, 173--196, 2019.

\bibitem{scalas} E. Scalas and M. Politi, A parsimonious model for intraday European option pricing, \emph{arXiv:1202.4332}, 2012.

\bibitem{librobern}
R.L. Schilling, R. Song and Z. Vondra\v{c}ek.
\newblock {Bernstein functions: theory and applications}.
\newblock \emph{Walter de Gruyter GmbH \& Company KG}, Vol 37 of De Gruyter Studies in Mathematics Series, 2010.

\bibitem{zyg}
R. L. Wheeden and A. Zygmund.
\newblock {Measure and integral: an introduction to real analysis}.
\newblock \emph{CRC Press}, Vol. 308, 2015.

\end{thebibliography}
\end{document}